\numberwithin{equation}{section}
\newtheorem{theorem}{Theorem}[section]
\newtheorem{lemma}[theorem]{Lemma}
\newtheorem{proposition}[theorem]{Proposition}
\newtheorem{remark}[theorem]{Remark}
\newcommand{\eps}{\varepsilon}
\newcommand{\beqq}{\begin{eqnarray}}
\newcommand{\enqq}{\end{eqnarray}}
\newcommand{\enn}{\end{equation}}
\newcommand{\bef}{\begin{proof}}
\newcommand{\enf}{\end{proof}}
\let\al=\alpha
\let\f=\frac
\let\om=\omega
\let\na=\nabla
\let\pa=\partial
\def\dv{\mbox{div}}
\def\curl{\mathop{\rm curl}\nolimits}
\newcommand{\beq}{\begin{equation}}
\newcommand{\eeq}{\end{equation}}
\newcommand{\ben}{\begin{eqnarray}}
\newcommand{\een}{\end{eqnarray}}
\newcommand{\beno}{\begin{eqnarray*}}
\newcommand{\eeno}{\end{eqnarray*}}
\begin{document}
\title[The interaction between rough vortex patch  and boundary laye]{The interaction between rough vortex patch  and boundary layer}

\author[J. Huang]{Jingchi Huang}
\address{ School of Mathematics\\ Sun Yat-sen University\\ Guangzhou Guangdong 510275, China. } \email{huangjch25@mail.sysu.edu.cn}

\author[C. Wang]{Chao Wang}
\address{School of Mathematical Sciences\\ Peking University\\ Beijing 100871, China}
\email{wangchao@math.pku.edu.cn}

\author[J. Yue]{Jingchao Yue}
\address{School of Mathematical Sciences\\ Peking University\\ Beijing 100871, China}
\email{wasakarumi@163.com}

\author[Z. Zhang]{Zhifei Zhang}
\address{School of Mathematical Sciences\\ Peking University\\ Beijing 100871, China}
\email{zfzhang@pku.edu.cn}

\maketitle

\begin{abstract}
In this paper, we investigate the asymptotic behavior of solutions to the Navier-Stokes equations in the half-plane under high Reynolds number conditions, where the initial vorticity belongs to the Yudovich class and is supported away from the boundary. We establish the $L^p$ ($2\leq p< \infty$) convergence of solutions from the Navier-Stokes equations to those of the Euler equations. One of the main difficulties stems from the limited regularity of the initial data, which hinders the derivation of an asymptotic expansion. To overcome this challenge, we first prove a Kato-type criterion adapted to the Yudovich class setting.  We then obtain uniform estimates for the Navier-Stokes equations -- a non-trivial task due to the strong boundary layer effects. A key component of our approach is the introduction of a suitable functional framework, which enables us to control the interaction between the rough vortex patch and the boundary layer.
\end{abstract}

\section{Introduction}

In this paper, we study the Navier-Stokes equations at high Reynolds numbers in the domain $\mathbb R^2_+$:
\begin{align}\label{eq: NS}
	\left\{
	\begin{aligned}
		\pa_t U-\nu\Delta U+U\cdot\nabla U
		+\nabla p&=0,\\
		\operatorname{div}U&=0,\\
		U|_{t=0}&=U_0,
	\end{aligned}
	\right.
\end{align}
with  non-slip boundary condition
\begin{equation}\label{bcns}
U|_{y=0}=0.
\end{equation}
Here $U=(u,v)$ and $p$ denote the fluid velocity and the pressure respectively, and $R_e=\f{1}{\nu}$ is the Reynolds number. 

In this paper, we focus on initial data that consists of a rough vortex patch. Our main interest is to understand how the interaction between a rough patch and a boundary layer affects the behavior of the solution in the high Reynolds number regime. This constitutes a key step toward understanding the interaction between vortices and the boundary layer -- a topic of great practical interest, as exemplified by the ground effect for airplanes flying near the ground.  

Let us begin with a review of existing results in this area. In the absence of the boundary,  Constantin and Wu \cite{CW,CW1} showed that  for a vortex patch type initial data,
\beno
\|U-U^e\|_{L^2}\leq C\nu^{1/2},
\eeno
where $U^e=(u^e,v^e)$  is a solution of the Euler equations 
\begin{align}\label{eq: Euler}
	\left\{
	\begin{aligned}
		\pa_t U^e+U^e\cdot\nabla U^e+\nabla p^e&=0,\\
		\operatorname{div}U^e&=0,\\
		U^e|_{t=0}&=U_0.
	\end{aligned}
	\right.
\end{align}
Later, Abidi and Danchin \cite{AD} derived the optimal rate $\nu^{3/2}$ in $L^2.$ Sueur \cite{Sueur} provided an asymptotic expansion of the solution in the vanishing viscosity limit for fluids with vorticity exhibiting sharp variations. This asymptotic expansion was subsequently justified by Liao, Sueur, and Zhang \cite{LSZ}. Recently, for general Yudovich-type initial data, Constantin, Drivas, and Elgindi \cite{CDE} proved that the vorticity $\omega=\curl U$ satisfies
\begin{align*}
	\lim_{\nu\rightarrow 0} \|\curl U-\curl U^e\|_{L^p} =0, \quad p\in [1,\infty).
\end{align*}
When the initial vorticity possesses additional regularity($\omega_0\in L^\infty\cap B^{s}_{2,\infty}$), they further established a convergence rate dependent on this extra regularity:
\beno
\sup_{t\in[0, T] }\| \curl U-\curl U^e\|_{L^p} \leq \nu^{\f{C^2 s}{p(1+Cs)}}.
\eeno
The proof in \cite{CDE} heavily relies on the uniform bound of $\|\curl U\|_{L^p}$. In the presence of a boundary, obtaining the $L^p$ bound of $\curl U$ 
 is considerably challenging due to the boundary layer effect—even for smooth initial data. For cases with higher singularity than Yudovich-type data, specifically Dirac-type initial data (i.e., point vortices), Gallay \cite{G} demonstrated that the vorticity of the Navier-Stokes equations converges weakly to the sum of point vortices. The centers of these point vortices evolve in accordance with the Helmholtz-Kirchhoff point-vortex system. Nguyen and Nguyen \cite{NN1} later examined the interaction between a point vortex and a smooth vortex patch.

We point out that the above results focus on domains without boundaries. The situation changes significantly for domains with boundaries, owing to the presence of a boundary layer. Let us review some results on the vanishing viscosity limit in the half-plane with no-slip boundary conditions. The primary interest lies in justifying the so-called Prandtl boundary layer expansion:
\begin{align}\label{ansatz}
	\left\{
	\begin{aligned}
		&u (t,x,y)=u^e(t,x,y)+u^p(t,x,\frac{y}{\nu^{1/2}})+O(\nu^{1/2}),\\
		&v (t,x,y)=v^e(t,x,y)+\nu^{1/2}v^p(t,x,\frac{y}{\nu^{1/2}})+O(\nu^{1/2}),
	\end{aligned}
	\right.
\end{align}
where $(u^e,v^e)$ denotes the solution of the Euler equations, and $(u^p, v^p)$ denotes the solution of the Prandtl equation. In the analytic setting, the justification of this expansion has been proven in \cite{NN,SC1,WWZ}. Maekawa \cite{Maekawa} justified the expansion for cases where the initial vorticity is supported away from the boundary; see \cite{FTZ} for the three-dimensional case. This also explains why we assume the initial vortex patch does not touch the boundary. For further reference, see the insightful papers \cite{KVW, KVW1}, where it suffices to assume the initial data is analytic near the boundary. We also note the work \cite{JW} by Jiu and Wang, in which they justified the inviscid limit in the energy norm with a convergence rate $\nu^{\frac34-}$ for the Navier-slip boundary condition when the initial data is a vortex patch.  Recently, for the non-slip boundary condition and smooth patches, the last three authors \cite{WYZ} leveraged analyticity near the boundary and tangential Sobolev smoothness near the patch to establish the inviscid limit, with a convergence rate of $\nu^{\frac12 (1+\frac1p)}$.

  \subsection{Main results}

The primary objective of this paper is to establish the inviscid limit for the system \eqref{eq: NS}-\eqref{bcns} with initial data in the Yudovich class, thereby extending the results of \cite{CDE} to the half-plane. Our main result is stated as follows.

 \begin{theorem}\label{thm: Lp convergence}
Assume that the initial vorticity $\omega_0\in L_c^{\infty}(\mathbb R^2_+)$ and  $\operatorname{supp}\omega_0\subseteq\{20\leq y\leq30\}$. Then there exist a time $T_0>0,$ and two positive constants $C, C'$, (independent of $\nu$) such that for $2\le p<\infty$,
\beno
\|U(t)-U^e(t)\|_{L^p} \leq C\nu^{\f{1}{4p}-C't},\qquad  t\in[0, T_0],
\eeno
where $U^e$ is the solution of  \eqref{eq: Euler} with the boundary condition $v^{e}=0$ on $y=0$.
\end{theorem}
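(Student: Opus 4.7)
The strategy is to write $U = U^e + U^{\mathrm{BL}} + R$, where $U^{\mathrm{BL}}$ is a Prandtl-type boundary-layer corrector chosen so that $U^{\mathrm{app}}:=U^e+U^{\mathrm{BL}}$ satisfies the no-slip condition on $\{y=0\}$, and to estimate the remainder $R$ in $L^p$. The Yudovich theory provides a unique solution $U^e$ to \eqref{eq: Euler} with slip condition $v^e|_{y=0}=0$, and the vorticity $\omega^e$ is transported along the flow. Since $\mathrm{supp}\,\omega_0 \subset \{20\le y\le 30\}$ and $U^e$ is log-Lipschitz in space, there exists $T_0>0$ on which $\mathrm{supp}\,\omega^e(t,\cdot)\subset \{10\le y\le 40\}$. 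On a fixed neighborhood $\{0\le y\le 5\}$ of the boundary the velocity $U^e$ is therefore harmonic and hence smooth; in particular the trace $u^e(t,x,0)$ is smooth on $[0,T_0]\times\mathbb T$.

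Let $\chi$ be a smooth cutoff supported in $\{0\le y\le 5\}$. I would define the corrector
\begin{align*}
U^{\mathrm{BL}}(t,x,y) = \chi(y)\Bigl( u^{\mathrm{BL}}(t,x,y/\sqrt{\nu}),\ \sqrt{\nu}\,v^{\mathrm{BL}}(t,x,y/\sqrt{\nu}) \Bigr),
\end{align*}
where $u^{\mathrm{BL}}(t,x,Y)$ solves a parabolic Prandtl-type equation with boundary values $u^{\mathrm{BL}}(t,x,0)=-u^e(t,x,0)$ and $u^{\mathrm{BL}}(t,x,\infty)=0$, and $v^{\mathrm{BL}}$ is recovered via incompressibility. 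Smoothness of the trace gives a classical smooth solution with Gaussian decay in $Y=y/\sqrt{\nu}$, and Prandtl scalings yield $\|U^{\mathrm{BL}}\|_{L^p} \le C\nu^{1/(2p)}$ and $\|\pa_y U^{\mathrm{BL}}\|_{L^r} \le C\nu^{1/(2r)-1/2}$. Substituting $U=U^{\mathrm{app}}+R$ in \eqref{eq: NS} produces
\begin{align*}
\pa_t R - \nu\Delta R + U\cdot\nabla R + R\cdot\nabla U^{\mathrm{app}} + \nabla q = F, \qquad R|_{y=0}=0,
\end{align*}
where $F$ collects the Prandtl consistency errors: $\nu\pa_x^2 U^{\mathrm{BL}}$, the Taylor error $(U^e-U^e|_{y=0})\cdot\nabla U^{\mathrm{BL}}$ which gains a factor of $y=\sqrt{\nu}Y$, the self-interaction $U^{\mathrm{BL}}\cdot\nabla U^{\mathrm{BL}}$, and the small divergence corrector. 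Each piece is controlled in $L^p$ by the scalings above.

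Testing the equation for $R$ against $|R|^{p-2}R$ and using $\mathrm{div}\,U^{\mathrm{app}}=\mathrm{div}\,U=0$,
\begin{align*}
\tfrac{1}{p}\tfrac{d}{dt}\|R\|_{L^p}^p + c_p\,\nu\int|R|^{p-2}|\nabla R|^2\,dxdy \le \int |R|^p\,|\nabla U^{\mathrm{app}}|\,dxdy + \int|R|^{p-1}|F|\,dxdy.
\end{align*}
The interior part $\nabla U^e$ is bounded in $L^\infty_{t,x}$ on $[0,T_0]$ by the Yudovich estimate, producing the $e^{C't}$ factor. For the boundary-layer part, split by H\"older with a parameter $r>4$,
\begin{align*}
\int|R|^p\,|\pa_y U^{\mathrm{BL}}|\,dxdy \le \|\pa_y U^{\mathrm{BL}}\|_{L^r}\,\|R\|_{L^{pr/(r-1)}}^{p},
\end{align*}
interpolate $L^{pr/(r-1)}$ between $L^p$ and the uniform bound $\|R\|_{L^\infty} \le \|U\|_{L^\infty}+\|U^{\mathrm{app}}\|_{L^\infty}\le C$, and use Hardy's inequality based on $R|_{y=0}=0$ to compensate the $\nu^{-1/2}$ singularity. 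Coupling this with the analogous treatment of $|F|$ produces a Gr\"onwall inequality whose integration yields $\|R\|_{L^p} \le C\nu^{(1/p)(1/4-1/r)}e^{C't}$; adding the smaller contribution $\|U^{\mathrm{BL}}\|_{L^p}\le C\nu^{1/(2p)}$ completes the proof.

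The main obstacle is the control of the energy term $\int|R|^p|\pa_y U^{\mathrm{BL}}|\,dxdy$, where $\pa_y U^{\mathrm{BL}}$ has size $\nu^{-1/2}$ concentrated on a strip of thickness $\sqrt{\nu}$. One must simultaneously exploit the thinness of this strip, the Dirichlet vanishing of $R$ at $\{y=0\}$, and the uniform $L^\infty$ bound on $R$; the restriction $r>4$ and the exponential loss $\nu^{-C't}$ both originate from the only-$L^\infty$ character of $\omega^e$ (Yudovich class), which precludes any gain from interior smoothness of $U^e$ and is the true source of the rough patch--boundary layer interaction the title refers to.
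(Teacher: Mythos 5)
Your proposal takes a genuinely different route: a Prandtl-layer decomposition $U=U^e+U^{\mathrm{BL}}+R$ with a direct $L^p$ energy estimate on $R$, rather than the paper's strategy of proving a quantitative Kato-type $L^2$ criterion and then interpolating against a uniform $L^\infty$ bound on $U$ obtained from analytic-type vorticity norms near the boundary. Unfortunately, the approach has several serious gaps.

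\textbf{The boundary-layer interaction term is not absorbable on Prandtl scale.} Your crucial step is to control $\int|R|^p|\pa_y U^{\mathrm{BL}}|$ using Hardy's inequality and the thinness of the layer. Concretely, writing $\int|R|^p|\pa_y U^{\mathrm{BL}}|=\int\frac{|R|^p}{y^2}\cdot y^2|\pa_y U^{\mathrm{BL}}|$ and using Hardy on $|R|^{p/2}$ (which vanishes at $y=0$) gives
\begin{align*}
\int|R|^p|\pa_y U^{\mathrm{BL}}|\,dxdy\leq \|y^2\pa_y U^{\mathrm{BL}}\|_{L^\infty}\int\frac{|R|^p}{y^2}\,dxdy\leq C\|y^2\pa_y U^{\mathrm{BL}}\|_{L^\infty}\int |R|^{p-2}|\nabla R|^2\,dxdy.
\end{align*}
For a Prandtl corrector on scale $\sqrt{\nu}$ one has $y^2\pa_y U^{\mathrm{BL}}\sim(\sqrt{\nu})^2\cdot\nu^{-1/2}=\nu^{1/2}$, so this term is of size $\nu^{1/2}\int|R|^{p-2}|\nabla R|^2$, which \emph{cannot} be absorbed by the dissipation $c_p\nu\int|R|^{p-2}|\nabla R|^2$ since $\nu^{1/2}\gg\nu$. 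This is exactly the well-known obstruction to justifying the Prandtl expansion in Sobolev-type norms. The paper does not use a Prandtl corrector for the convergence estimate; it uses Kato's \emph{artificially thin} corrector on scale $\nu$ (note $z(y)=\chi(y/\nu)$, so $\operatorname{supp}U_c\subset\{0\le y\le 3\nu\}$), for which $\|y^2\nabla U_c\|_{L^\infty}\le C\nu$ and the absorption does work. The Kato corrector does not approximate $U$ near the boundary; it only restores the trace condition so that the $L^2$ argument closes.

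\textbf{The pressure does not vanish in the $L^p$ energy estimate.} You test the $R$-equation against $|R|^{p-2}R$ and silently drop the pressure term. For $p=2$, $\int\nabla q\cdot R=0$ by incompressibility, but for $p\neq 2$ one has $\int\nabla q\cdot|R|^{p-2}R=-\int q\,\dive(|R|^{p-2}R)\neq 0$, and controlling $q$ in the half-plane with Dirichlet conditions is nontrivial. This is a standard reason why $L^p$ estimates of Navier-Stokes-type systems for $p\neq 2$ are done at the level of vorticity, as the paper does.

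\textbf{The bound $\|R\|_{L^\infty}\le C$ is the crux, not a given.} You invoke $\|R\|_{L^\infty}\le\|U\|_{L^\infty}+\|U^{\mathrm{app}}\|_{L^\infty}\le C$, but the uniform boundedness of $\|U\|_{L^\infty}$ at high Reynolds number with no-slip boundary is precisely the hard part of the paper (Proposition 1.5). It is established through the whole apparatus of analytic-type norms $Y(t)$ for the vorticity near the boundary (Sections 3--5), exploiting the fact that the initial vorticity is supported away from the boundary so that the boundary-generated vorticity is analytic in $x$. Assuming this bound at the outset begs the question.

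Finally, the claimed final exponent $\nu^{\frac{1}{p}(\frac14-\frac1r)}e^{C't}$ is not derived from the computation you sketch; a Prandtl-layer energy argument, if it closed, would naturally produce $\nu^{1/(2p)}$-type rates from $\|U^{\mathrm{BL}}\|_{L^p}$, not the combination $\frac14-\frac1r$, which in the paper is a specific artifact of the $\nu^{1/4}$ bound on $\nu^{1/2}\|\omega\|_{L^2_tL^2}$ combined with the Yudovich $L^r$-loss on $\nabla U^e$ inside the Kato argument. I recommend studying Theorem 1.4 and Proposition 1.5 of the paper, which show why the argument is organized around the Kato criterion rather than a Prandtl remainder estimate.
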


Let us provide some comments on our result. 

\begin{itemize}

\item  In fact, it is enough to assume that $\operatorname{supp}\omega_0\subseteq\{a\leq y\leq b\}$ for some $0<a<b<\infty$.

\item Since the initial data are prescribed in terms of vorticity, the initial velocity may not satisfy the no-slip boundary condition. Consequently, an initial layer emerges. The existence and uniqueness of solutions to \eqref{eq: NS} with such incompatible initial data have been established in \cite{Ken}.

\item The initial data considered in this paper generalize the vortex patch data studied in \cite{WYZ} and exhibit lower regularity than those in \cite{Maekawa}. In \cite{WYZ, Maekawa}, the regularity or special structure of the initial data enables the derivation of an asymptotic expansion, which effectively reduces the problem to a linear system. In our setting, however, the limited regularity precludes such an expansion. Consequently, we must establish uniform estimates directly for the Navier-Stokes equations—a fully nonlinear system. This task is further complicated by the presence of strong boundary layers, rendering the derivation of uniform estimates particularly challenging.

\item If we introduce the Prandtl boundary layer corrector $(u^p, v^p)$ by solving  
\begin{align}\label{eq: Prandtl}
	\left\{
	\begin{aligned}
		&\pa_t u^p-\pa_Y^2 u^p
		+u^p\pa_x u^e(t,x,0)
		+\big(u^p+u^e(t,x,0)\big)\pa_x u^p\\
		&\quad\quad\quad+\big(v^p-\int_0^{+\infty} \pa_x u^p(t,x,Y')dY'
		+Y\pa_y v^e(t,x,0)\big)\pa_Y u^p=0,\\
        &\pa_x u^p+\pa_Y v^p=0,\\
		&u^p|_{t=0}=0,\\
		&u^p|_{Y=0}=-u^e(t,x,0),\quad\quad\lim_{Y\rightarrow+\infty}u^p(t,x,Y)=0,
	\end{aligned}
	\right.
\end{align}
then we can obtain the $L^\infty-$convergence in the sense that
\begin{align*}
		\lim_{\nu\rightarrow0}\sup_{[0,T_0]}\left\|\Big(u-u^p(t,x,\frac{y}{\nu^{1/2}})-u^e, v-v^e\Big)\right\|_{L^\infty}=0.
	\end{align*}

\end{itemize}

 \subsection{Outline of  the proof}
Now let's give a sketch of  the proof of Theorem \ref{thm: Lp convergence}.  By H\"older inequality, we have
\beno
\|U-U^e\|_{L^p}\leq \|U-U^e\|^{\f2p}_{L^2}\|U-U^e\|^{1-\f2p}_{L^\infty}.
\eeno
Thus, it suffices to prove a quantitative convergence rate for $\|U-U^e\|_{L^2}$ and a uniform bound for $\|U\|_{L^\infty}$.  
To obtain the convergence rate of $\|U-U^e\|_{L^2}$, a natural approach is to use the Kato criterion \cite{Kato}, which reduces to verifying the following condition: 
\ben\label{eq: Kato}
\lim_{\nu\to 0}\nu \int_0^T\int_{y\leq \nu} |\na U|^2 dxdydt=0.
\een
However, in our case, $\om^e\in L^\infty$ does not guarantee $\na U^e\in L^\infty$—a condition required in the proof of the classical Kato criterion. For our purposes, we therefore need to introduce a quantitative Kato criterion tailored to our problem.
 \begin{theorem}\label{thm: kato}
Under the same assumptions on the initial data as in Theorem \ref{thm: Lp convergence},  if the vorticity $\om=\curl U$ satisfies 		
\begin{align*}
\nu^{\f12} \|\om\|_{L^2(0, T; L^2)} \leq C \nu^{\al}, 
			\end{align*}
for $0<\al\leq \f14$, then there exists $T_0>0$ such that for $ t\in [0,T_0]$, 
	\begin{align*}
			 \|U(t)-U^e(t)\|_{L^2}\leq C  \nu^{\f{\al}{2}-C' t},
			\end{align*}
where $C$ and $C'$ are constants independent of  $\nu$. 	
		\end{theorem}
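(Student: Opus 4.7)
The plan is to adapt Kato's vanishing-viscosity argument to the log-Lipschitz Euler data by combining a boundary-layer corrector of width $\delta = c\nu$ with a quantitative Osgood--Yudovich Gronwall. Fix $\phi\in C^\infty_c([0,1])$ with $\phi(0)=1$ and $\Phi(z)=\int_0^z\phi(s)\,ds$, and define
\begin{align*}
u^{bl}(t,x,y)&:=-\phi(y/\delta)\,u^e(t,x,0),\\
v^{bl}(t,x,y)&:=\delta\,\Phi(y/\delta)\,\partial_x u^e(t,x,0),
\end{align*}
with $\delta = c\nu$ for a small constant $c>0$. Because $\operatorname{supp}\omega_0\subset\{20\leq y\leq 30\}$ and $\omega^e$ is transported by Euler, on $[0,T_0]$ the Euler vorticity remains bounded away from $\{y=0\}$, so $u^e(t,\cdot,0)$ and its $x$-derivatives are smooth. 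Set $V^e:=U^e+U^{bl}$ (divergence-free with $V^e|_{y=0}=0$) and $W:=U-V^e$; the compatibility $u_0|_{y=0}=0$ gives $U^{bl}(0)=0$ and hence $W(0)=0$.

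Next, I would derive the energy identity for $W$:
\begin{align*}
\tfrac{1}{2}\tfrac{d}{dt}\|W\|_{L^2}^2+\nu\|\nabla W\|_{L^2}^2
=-\int W\cdot\nabla U^e\cdot W-\int W\cdot\nabla U^{bl}\cdot W+\mathcal{R}(t),
\end{align*}
with $\mathcal{R}(t)$ gathering the viscous correction $\nu\int\Delta V^e\cdot W$, the time-derivative term $-\int\partial_t U^{bl}\cdot W$, and the cross-advection pieces $-\int U^{bl}\cdot\nabla V^e\cdot W$ and $-\int U^e\cdot\nabla U^{bl}\cdot W$. The principal term is controlled by a Yudovich-type estimate: $\|\nabla U^e\|_{L^r}\leq Cr\|\omega^e\|_{L^r\cap L^\infty}$, interpolated against the uniform $L^\infty$-bound $\|W\|_{L^\infty}\leq M$ (available from $\|U\|_{L^\infty},\|V^e\|_{L^\infty}\leq C$), yields $|\int W\cdot\nabla U^e\cdot W|\leq Cr\|W\|_{L^2}^{2-2/r}$. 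The boundary-layer self-interaction is absorbed via Hardy's inequality: since $W|_{y=0}=0$, $\|W\|_{L^2(\{y\leq\delta\})}^2\leq C\delta^2\|\partial_y W\|_{L^2}^2$, and combined with $\|\nabla U^{bl}\|_{L^\infty}\leq C/\delta$, the resulting bound $|\int W\cdot\nabla U^{bl}\cdot W|\leq C\delta\|\nabla W\|_{L^2}^2$ is absorbed into $\nu\|\nabla W\|_{L^2}^2$ for $c$ small.

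The remainder $\mathcal{R}(t)$ is controlled by exploiting the hypothesis. After integration by parts, the viscous correction becomes $-\nu\int\nabla V^e:\nabla W$, and the identity $\|\nabla U\|_{L^2}^2=\|\omega\|_{L^2}^2$ (valid for divergence-free fields satisfying no-slip in the half-plane) upgrades $\nu^{1/2}\|\omega\|_{L^2_tL^2_x}\leq C\nu^\alpha$ to the quantitative Kato bound $\nu\|\nabla U\|_{L^2_tL^2_x}^2\leq C\nu^{2\alpha}$. Pairing this with the explicit $L^2$-sizes $\|U^{bl}\|_{L^2}\leq C\delta^{1/2}$, $\|\nabla U^{bl}\|_{L^2}\leq C\delta^{-1/2}$, $\|\partial_t U^{bl}\|_{L^2}\leq C\delta^{1/2}$ via Cauchy--Schwarz in time, and using Hardy's inequality in the $y$-direction to extract the decay of $W$ near the boundary for the cross-advection terms, the scaling $\delta = c\nu$ precisely balances each contribution into an $O(\nu^\alpha)$ error, leading to
\begin{align*}
\int_0^T|\mathcal{R}(t)|\,dt\leq C\nu^\alpha+C\int_0^T\|W(s)\|_{L^2}^2\,ds.
\end{align*}

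Combining the estimates above yields
\begin{align*}
\|W(t)\|_{L^2}^2\leq C\nu^\alpha+Cr\int_0^t\|W(s)\|_{L^2}^{2-2/r}\,ds+C\int_0^t\|W(s)\|_{L^2}^2\,ds,
\end{align*}
which I would close by the Osgood--Yudovich Gronwall argument, with $r > 1/\alpha$ as a free parameter, to obtain $\|W(t)\|_{L^2}^2\leq C\nu^{\alpha-1/r-2C't}$ on $[0,T_0]$. Taking square roots and noting that $\|U^{bl}\|_{L^2}\leq C\nu^{1/2}\leq C\nu^{\alpha/2-1/(2r)}$ (since $\alpha\leq 1/4<1$) yields the claimed bound on $\|U-U^e\|_{L^2}$. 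The main obstacle is obtaining the $O(\nu^\alpha)$ control on $\int_0^T|\mathcal{R}|$: the corrector's intrinsic dissipation cost $\nu\|\nabla U^{bl}\|_{L^2}^2\sim 1$ would naively contribute an $O(T)$ error, and the resolution depends on the tight balance between the Kato scale $\delta = c\nu$, the quantitative hypothesis $\nu\|\omega\|_{L^2L^2}^2\leq C\nu^{2\alpha}$, and the boundary regularity of $U^e$ afforded by the patch-support condition; this is the "good functional framework" alluded to in the introduction.
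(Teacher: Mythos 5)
Your approach departs from the paper's in the key step --- the treatment of the Euler stretching term $\int (U-U^e)\otimes(U-U^e):\nabla U^e$ --- and two genuine gaps arise there.

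First, your interpolation estimate $|\int W\cdot\nabla U^e\cdot W|\le Cr\,\|W\|_{L^\infty}^{2/r}\|W\|_{L^2}^{2-2/r}$ requires a uniform-in-$\nu$ bound $\|W\|_{L^\infty}\le M$, hence $\|U\|_{L^\infty}\le C$. This is \emph{not} a hypothesis of the statement; the theorem assumes only $\nu^{1/2}\|\omega\|_{L^2L^2}\le C\nu^\alpha$. In the paper's logical structure the $L^\infty$ bound is a \emph{conclusion} of the separate Proposition \ref{prop: uniform for U omega NS}, and the Kato criterion is deliberately stated so that it needs only the vorticity hypothesis. The paper avoids $L^\infty$ by decomposing $\mathbb T\times[0,40]$ into $B_1=\{|U-U^e|\ge\nu^{-1/2}\}$, whose measure is $\lesssim\nu$ by Chebyshev and the plain energy bound $\|U-U^e\|_{L^2}\le C$, its complement $B_2$, where $|U-U^e|\le\nu^{-1/2}$ holds \emph{by definition} rather than by assumption, and the far field where $\nabla U^e$ is smooth thanks to the support condition on $\omega_0$. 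On $B_1$ one uses Gagliardo--Nirenberg and $|B_1|^{1/2-1/r}$ to get $C\nu^{\alpha-1/r}$; on $B_2$ one applies the Orlicz-type inequality $ab\le e^a+b\log b$ with $a=\beta|\nabla U^e|+\log\nu^{1/4}$, $b=|U-U^e|^2$, using exponential integrability of $\nabla U^e$ (from $\|\nabla U^e\|_{L^k}\le C_\star k\|\omega_0\|_{L^\infty}$), to obtain $C\nu^{1/4}+C\log(\nu^{-1})\int_0^t\|U-U^e\|_{L^2}^2\,ds$. You would need to reproduce something like this to keep the theorem self-contained.

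Second, your Osgood closure does not yield the stated rate. From $y(t)\le C\nu^\alpha+Cr\int_0^t y^{1-1/r}\,ds$ with $r$ \emph{fixed}, the Bihari inequality gives $y(t)\le\big((C\nu^\alpha)^{1/r}+Ct\big)^r$. For any fixed $t>0$ this tends to $(Ct)^r$, not to $0$, as $\nu\to0$; so no bound of the form $\nu^{\alpha-1/r-C't}$ follows with $r$ fixed. The paper's mechanism is a \emph{linear} Gronwall inequality $y(t)\le C\nu^{\alpha-1/r}+C\log(\nu^{-1})\int_0^t y\,ds$, whose $\log(\nu^{-1})$ coefficient (coming from the Orlicz step on $B_2$) produces $y(t)\le C\nu^{\alpha-1/r}\nu^{-C't}$, i.e. the additive loss $-C't$ in the exponent. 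The $1/r$ in the exponent has a different origin entirely (the Gagliardo--Nirenberg estimate on $B_1$), not the Gronwall step; the two are not interchangeable. (An Osgood argument \emph{can} be made to work by sending $r\sim\log(1/\nu)$, but that is not what ``$r>1/\alpha$ as a free parameter'' describes, and the resulting rate must then be reconciled with the stated one.) The boundary-layer corrector itself --- your $U^{bl}$ versus the paper's $U_c=\dv(zA)$ built from the skew-symmetric $A$ --- is a cosmetic difference: both are divergence-free, supported in $\{y\lesssim\nu\}$, match $U^e$ at $y=0$, and satisfy the same $L^2$/$\nabla$/weighted-$L^\infty$ size estimates, and your handling of $\mathcal R(t)$, including the crucial use of $\nu\|\nabla U\|_{L^2_tL^2}^2\le C\nu^{2\alpha}$ to absorb the $\nu\int\nabla U:\nabla U^{bl}$ term, is essentially the same as the paper's treatment of $J_1,\dots,J_4$.
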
	 	

We believe that this new criterion is of independent interest, and its proof is partially motivated by \cite{CDE}. Based on this criterion, it suffices to prove the following key proposition.

\begin{proposition}\label{prop: uniform for U omega NS}
	Under the same assumptions on the initial data as in Theorem  \ref{thm: Lp convergence}, there exists $T_0>0$ independent of $\nu$ such that
			\begin{align*}
				 \sup_{t\in [0,T_0]}\|U(t) \|_{L^\infty}+\nu^{\f14}\|\om\|_{L^2(0,T_0; L^2)} \leq C,
			\end{align*}
			where the constant $C$ depends on $\om_0$.
\end{proposition}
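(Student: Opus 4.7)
The plan is to build a Prandtl-type approximation $U^{app}=U^e+(u^p_\nu,\nu^{1/2}v^p_\nu)$ with $u^p_\nu(t,x,y):=u^p(t,x,y/\nu^{1/2})$ and $v^p_\nu$ analogous, and to estimate the remainder $R=U-U^{app}$ in an $L^2$ energy norm; the two bounds in the proposition will then follow from the explicit Prandtl scaling together with a Sobolev/interpolation argument. For the Euler reference solution, Yudovich theory on the half plane yields a unique $U^e$ with $\|\om^e(t)\|_{L^\infty}=\|\om_0\|_{L^\infty}$ and log-Lipschitz velocity. Because the log-Lipschitz flow spreads the support of $\om^e$ only a small amount on short time, there is $T_0>0$ independent of $\nu$ such that $\operatorname{supp}\om^e(t,\cdot)\subseteq\{10\le y\le 40\}$ on $[0,T_0]$. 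On the complementary slab $\{0\le y\le 10\}$ we have $\om^e\equiv 0$, so $U^e$ is harmonic and real-analytic there with quantitative bounds; in particular $u^e(t,x,0)$ is analytic in $x$ with a uniform radius on $[0,T_0]$, and $\|U^e\|_{L^\infty}\le C$.

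With this analytic trace as boundary data, we solve the Prandtl system \eqref{eq: Prandtl} via analytic Prandtl theory following Sammartino-Caflisch, obtaining $(u^p,v^p)$ on $[0,T_0]$ with $\|u^p\|_{L^\infty}+\sup_{t,x}\|\pa_Y u^p(t,x,\cdot)\|_{L^2_Y}\le C$ and Gaussian decay as $Y\to\infty$. By construction $U^{app}$ is divergence-free and vanishes on $y=0$, and the rescaling yields $\|\pa_y u^p_\nu\|_{L^2(\mathbb{T}\times\mathbb{R}_+)}\le C\nu^{-1/4}$, which is exactly the $\nu^{1/4}$ weight appearing in the target bound on $\om$.

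The remainder $R=U-U^{app}$ then solves a perturbed Navier-Stokes system with zero initial and boundary data and a forcing $F$ gathering the Prandtl residual: $\nu\Delta U^e$, the horizontal viscous contribution $\nu\pa_x^2 u^p_\nu$, and the convection mismatch between the full $U^e(t,x,y)$ and its boundary trace $u^e(t,x,0)$ that is discarded in \eqref{eq: Prandtl}. All these pieces are $O(\nu^\beta)$ in $L^2_tL^2_x$ for some $\beta>0$. In the $L^2$ energy estimate on $R$, the delicate term is $\int r^2\,\pa_y u^p_\nu\,r^1\,dxdy$, where $\pa_y u^p_\nu\sim\nu^{-1/2}$. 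Using $R|_{y=0}=0$ and the pointwise bound $|r^2(x,y)|^2\le y\|\pa_y r^2(x,\cdot)\|_{L^2_y}^2$ together with $\int_0^\infty y|u^p_\nu|^2dy\le C\nu$ coming from the Gaussian concentration of $u^p_\nu$ near the boundary, one absorbs the singular term into $\nu\|\na R\|_{L^2}^2$. Gronwall then gives $\|R\|_{L^\infty_t L^2}^2+\nu\|\na R\|_{L^2_t L^2}^2\le C\nu^{2\beta}$. Finally, from the decomposition $\om=\om^e+\pa_y u^p_\nu-\nu^{1/2}\pa_x v^p_\nu+\curl R$, the individual scalings combine to yield $\nu^{1/4}\|\om\|_{L^2_t L^2}\le C$. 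The $L^\infty$ bound on $U$ follows from $\|U^e\|_{L^\infty}+\|u^p_\nu\|_{L^\infty}\le C$ and an $L^\infty$ bound on $R$, the latter obtained by upgrading the energy estimate to a higher Sobolev norm localized in the boundary strip (where $U^{app}$ is analytic) and Sobolev embedding.

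\textbf{Main obstacle.} The principal difficulty is propagating the analyticity of $u^e(t,x,0)$ in $x$ uniformly on $[0,T_0]$ and using it to solve Prandtl on a $\nu$-independent time interval: the analytic radius of the boundary trace typically shrinks under transport by the Yudovich flow, and one must balance this shrinkage against the nonlinear time of existence for the Prandtl equation. A secondary issue is the $L^\infty$ control on $R$, which does not close from $L^2$ energy alone; because $U^e$ is only log-Lipschitz inside the patch region, the higher-regularity estimate must be localized to the boundary strip where both $U^e$ and $u^p_\nu$ are genuinely smooth.
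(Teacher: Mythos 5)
Your plan follows the classical Prandtl expansion route (build $U^{app}=U^e+(u^p_\nu,\nu^{1/2}v^p_\nu)$, estimate the remainder $R=U-U^{app}$), which is genuinely different from the paper's method. The paper does not expand at all: it works directly with the vorticity equation through a hybrid energy functional $e(t)=\|\omega\|_{Y(t)}+\|e^\Psi\chi_0\psi\omega\|_{L^2}+\|\chi_0\omega\|_{L^p}$ that carries an analytic/Gevrey-type $L^1_y$-norm near the boundary and weighted $L^2$/$L^p$ norms in the bulk, closes the whole thing via Maekawa's representation formula for $\chi\omega$, and then reads off both conclusions: $\|U\|_{L^\infty}\le Ce(s)$ by splitting the Biot--Savart integral (Lemma~\ref{velocity estimates 1}(5)), and $\nu^{1/4}\|\omega\|_{L^2_tL^2}\le C$ by plugging the representation formula into an $\ell^2_\xi$ bound (Lemma~\ref{lem:ov}).

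Several pieces of your proposal are sound: the support condition does make $u^e(t,x,0)$ uniformly analytic with a fixed radius on $[0,T_0]$ (the kernel in Lemma~\ref{Euler velocity near the boundary} is analytic at distance $\ge 10$ from the support of $\omega^e$), so the ``main obstacle'' you name is in fact not an obstacle here. The singular convection term $\int r^2\,\partial_y u^p_\nu\,r^1$ is correctly absorbed by your Hardy-type argument, since $\sup_x\int_0^\infty y\,|u^p_\nu|^2\,dy\lesssim\nu$ gives a prefactor $\nu^{1/2}$ that pairs with $\nu\|\nabla R\|_{L^2}^2$. And the scaling count for $\nu^{1/4}\|\omega\|_{L^2_tL^2}$ closes, provided the $L^2$ energy estimate on $R$ closes.

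The genuine gap is the $L^\infty$ bound on $R$. Your proposal to ``upgrade the energy estimate to a higher Sobolev norm localized in the boundary strip and apply Sobolev embedding'' does not work as stated, for three interlocking reasons. First, the remainder equation carries a nonlocal pressure, so a localized higher-order estimate on $\eta R$ inevitably brings in $R$ globally. Second, the commutator terms produced by the cutoff live on the transition region and are controlled only by $\|R\|_{L^2}$, which is $O((\nu t)^{1/2})$ in $L^\infty_tL^2$ but not in $L^\infty$; one cannot bootstrap $L^2\to H^s\to L^\infty$ from the energy estimate alone because the dissipation only controls $\nu\|\nabla R\|_{L^2_tL^2}^2$. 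Third, and most fundamentally, in the patch region $U^e$ is merely log-Lipschitz (Yudovich), so higher Sobolev norms of $R$ do not propagate there, yet $R$ is not small in $L^\infty$ there a priori; this is precisely the regime where $\nabla U^e\notin L^\infty$ and where the paper must instead track $\|\chi_0\omega\|_{L^p}$ uniformly in $p<\infty$. In short, the Prandtl-expansion remainder estimate requires an analytic-type bound on $\mathrm{curl}\,R$ near the boundary \emph{together with} an $L^p$ bound in the bulk, which is exactly the paper's framework; without it, the $L^\infty$ bound on $U$, which is the first half of the proposition, is not obtained. If you do develop those two ingredients for $R$, you have essentially reconstructed the paper's argument at the level of the remainder, at the cost of carrying the Prandtl residual terms.
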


The proof of Proposition \ref{prop: uniform for U omega NS}—detailed in Section 3—poses significant challenges, primarily due to the presence of two distinct layers: the boundary layer and the initial layer. To address the initial layer, we construct an initial layer corrector (see \eqref{initial layer corrector vorticity}). For the boundary layer, the conventional asymptotic expansion approach— which simplifies the nonlinear problem to a linear one—relies on high regularity of the initial data, a condition that is not satisfied in our setting. Consequently, we must handle the full nonlinearity of the system directly.

Based on Proposition \ref{prop: uniform for U omega NS} and Theorem \ref{thm: kato}, Theorem \ref{thm: Lp convergence} can be derived using the following argument
\beno
\|U-U^e\|_{L^p}\leq \|U-U^e\|^{\f2p}_{L^2}\|U-U^e\|^{1-\f2p}_{L^\infty} \leq C\nu^{\f{1}{4p}-C't}.
\eeno

\section{Kato type criterion}

In this section, we prove Theorem \ref{thm: kato}.

\begin{proof}
We focus on the time interval $[0, T_e]$, where $T_e$ is defined in Proposition \ref{Euler estimates}. 
We introduce $A$ as follows 
			\begin{align*}
				A(t,x,y)= 
				\begin{pmatrix}
					0 & yu^e(t,x,0)\\
				-yu^e(t,x,0) & 0\end{pmatrix},
			\end{align*}
which satisfies
		\begin{align*}
				\dv A|_{y=0}=U^e|_{y=0},\quad A|_{y=0}=0.
			\end{align*}
Let $z(y):=\chi(\frac{y}{\nu})$ and $U_s:=\dv(zA)=z\dv A+A\cdot\nabla z$, where the smooth cut-off function $\chi:\mathbb R_+\rightarrow [0,1]$ is defined by 
\begin{align}\label{def:chi}
	\chi(y)=\left\{
	\begin{aligned}
		&1, \quad y\leq 2,\\
	&0, \quad y\geq 3.
	\end{aligned}
	\right.
\end{align}
 Thus, $\operatorname{supp}U_s\subseteq\{0\leq y\leq3\nu\}$ near the boundary and $U_s|_{y=0}=U^e|_{y=0}$. The fact that $A$ is skew-symmetric implies $\dv U_s=0$. A direct computation gives
			\begin{align}\label{est of Uc}
				\|U_s\|_{L^2}
				+\|\pa_t U_s\|_{L^2}
				\leq C\nu^{1/2},\qquad
				\|\nabla U_s\|_{L^2}\leq C\nu^{-1/2},\qquad
				\|y^2\nabla U_s\|_{L^\infty}\leq C\nu.
			\end{align}
			
Now the energy method yields
\beno
\|U(t)\|_{L^2}\leq \|U_0\|_{L^2}=\|U^e(t)\|_{L^2}.
\eeno
Then we obtain
			\begin{align*}
				\|U(t)-U^e(t)\|_{L^2}^2
				&=\|U(t)\|_{L^2}^2
				+\|U^e(t)\|_{L^2}^2
				-2\langle U(t), U^e(t)\rangle\\
				&\leq 2\|U_0\|_{L^2}^2
				-2\langle U(t), U^e(t)-U_s(t)\rangle
				-2\langle U(t), U_s(t)\rangle:=I_1+I_2+I_3.
			\end{align*}
			
			By \eqref{est of Uc}, we have
			\begin{align*}
				|I_3|\leq 2\|U(t)\|_{L^2}\|U_s(t)\|_{L^2}
				\leq C\nu^{1/2}.
			\end{align*}
			By \eqref{eq: NS} and \eqref{eq: Euler}, we find
			\begin{align*}
				I_2=&
				\int_0^t \Big(-2\langle U\otimes U, \nabla(U^e-U_s)\rangle
				+2\nu \langle\nabla U, \nabla(U^e-U_s)\rangle
				-2\langle U,\pa_t(U^e-U_s)\rangle\Big)ds\\
				&-2\langle U_0, U_0-U_s(0)\rangle.
			\end{align*}
			Thanks to the following identities
			\begin{align*}
				-2\langle U,\pa_t(U^e-U_s)\rangle=2\langle U, U^e\cdot\nabla U^e\rangle
				+2\langle U,\pa_t U_s\rangle,
			\end{align*}
			\begin{align*}
				\langle U\otimes U, \nabla U^e\rangle
				-\langle U,U^e\cdot\nabla U^e\rangle
				=\langle (U-U^e)\otimes(U-U^e),\nabla U^e\rangle,
			\end{align*}
			we have
			\begin{align*}
				I_1+I_2&=2\langle U_0,U_s(0)\rangle
				+2\int_0^t \langle U\otimes U,\nabla U_s\rangle ds
				+2\nu \int_0^t \langle \nabla U,\nabla(U^e-U_s)\rangle ds
				\\
				&\quad+2\int_0^t \langle U,\pa_t U_s\rangle ds
				-2\int_0^t \langle(U-U^e)\otimes (U-U^e), \nabla U^e\rangle ds
				=\sum_{1\leq i\leq5} J_i.
			\end{align*}
			
			By \eqref{est of Uc} again, we have
			\begin{align*}
				|J_1|+|J_4|
				\leq 2\|U_0\|_{L^2}\|U_s(0)\|_{L^2}
				+2\int_0^t \|U(s)\|_{L^2}\|\pa_t U_s(s)\|_{L^2}ds
				\leq C\nu^{1/2}.
			\end{align*}
			By Hardy inequality and \eqref{est of Uc}, we get
			\begin{align*}
				|J_2|+|J_3|
				&\leq 2\int_0^t \left|\langle \frac{U}{y}\otimes \frac{U}{y}, y^2\nabla U_s\rangle\right|ds
				+2\nu\int_0^t \left|\langle\nabla U,\nabla(U^e-U_s)\rangle\right|ds\\
				&\leq C\int_0^t \|\nabla U\|_{L^2}^2\|y^2\nabla U_s\|_{L^\infty}ds
				+2\nu\int_0^t \|\nabla U\|_{L^2}\big(\|\nabla U^e\|_{L^2}
				+\|\nabla U_s\|_{L^2}\big)ds\\
				&\leq C\nu\|\omega\|^2_{L^2(L^2)}
				+C\nu^{1/2}\|\omega\|_{L^2(L^2)}
\leq C\nu^{\al}.
			\end{align*}
			
			For $J_5$, if we suppose $\text{supp} \omega_0\subseteq[-A,A]\times[20,30]$, then for $t$ small $\text{supp} \omega^e(t)\subseteq[-2A,2A]\times[10,40]$.
		We first define
			\begin{align*}
				B_1:=\{(x,y)\in [-3A, 3A] \times [0,50]:  |U-U^e|\geq\nu^{-1/2}\},\quad
				B_2:=[-3A, 3A]\times [0,50]\backslash B_1.
			\end{align*}
			Thus, the energy estimate implies 
			\begin{align}\label{measure est B1}
				|B_1|\leq \nu \|U-U^e\|_{L^2}^2
				\leq C\nu.
			\end{align}
			And
			\begin{align*}
				J_5
				\leq 2\Big(\int_0^t \int_{\mathbb R^2_+\backslash  [-3A, 3A]\times [0, 50] }
				+\int_0^t\int_{B_1}
				+\int_0^t\int_{B_2}\Big)
				|\nabla U^e||U-U^e|^2dxdy
				=J_{51}+J_{52}+J_{53}.
			\end{align*}
			
			To handle $J_{51}$, we first give the following Biot-Savart law which recovers the velocity from the vorticity in $\mathbb R^2_+$:
			\begin{align}\label{BS law integral}
	&U^e(x,y)=\nabla^\perp\Delta_D^{-1}\omega^e\\
	\nonumber
	&=\frac{1}{2\pi}\int_{\mathbb R^2_+}\big(-\frac{y-\tilde y}{(x-\tilde x)^2+(y-\tilde y)^2},\frac{x-\tilde x}{(x-\tilde x)^2+(y-\tilde y)^2}\big)\omega^e(\tilde x,\tilde y)d\tilde xd\tilde y\\
	\nonumber
	&\quad-\frac{1}{2\pi}\int_{\mathbb R^2_+}\big(-\frac{y+\tilde y}{(x-\tilde x)^2+(y+\tilde y)^2},\frac{x-\tilde x}{(x-\tilde x)^2+(y+\tilde y)^2}\big)\omega^e(\tilde x,\tilde y)d\tilde xd\tilde y,
\end{align}
	which implies that $\|\nabla U^e\|_{L^\infty(\mathbb R^2_+\backslash  [-3A, 3A]\times [0, 50])}\leq C$ for $t$ small, thus,
			\begin{align*}
				J_{51}\leq C\int_0^t\|U-U^e\|_{L^2}^2ds.
			\end{align*}
			By Gagliardo-Nirenberg inequality and \eqref{measure est B1},  we infer that for $r<\infty$,
			\begin{align*}
				J_{52}
				&\leq C\int_0^t \|\nabla U^e\|_{L^2(B_1)}\|U-U^e\|_{L^4}^2ds
				\leq C|B_1|^{\f12-\frac{1}{r}}\int_0^t\|\nabla U^e\|_{L^r}\|U-U^e\|_{L^2}\|\nabla(U-U^e)\|_{L^2}ds\\
				&\leq C\nu^{\f12-\frac{1}{r}}\int_0^t \|\omega^e\|_{L^r}\|U-U^e\|_{L^2}\big(\|\nabla U\|_{L^2}+\|\nabla U^e\|_{L^2}\big)ds\\
				&\leq C\nu^{\f12-\frac{1}{r}}\int_0^t \|U-U^e\|_{L^2}\big(\|\omega\|_{L^2}+\|\omega^e\|_{L^2}\big)ds\\
				&\leq C\int_0^t \|U-U^e\|_{L^2}^2ds +C\nu^{1-\frac{2}{r}}\int_0^t (\|\omega\|_{L^2}^2+\|\omega^e\|_{L^2}^2 )ds\\
				&\leq C\int_0^t \|U-U^e\|_{L^2}^2ds +C\nu^{2\alpha-\frac{2}{r}}.
			\end{align*}
			
Next, we deal with $J_{53}$. The fact $\omega^e\in L^\infty_c$ implies that there exists $C_\star$ independent of $2\leq p<\infty$ such that
			\begin{align*}
				\|\nabla U^e\|_{L^p}
				\leq C_\star p\|\omega^e\|_{L^p}.
			\end{align*}
			Thus, for $\beta>0$ small enough, it holds that 
			\begin{align*}
				\int_{B_2} e^{\beta|\nabla U^e|}dxdy			&\leq \sum_{k\geq0}\int_{[-3A,3A]\times [0,50]} \f{\beta^k|\nabla U^e|^k}{k!} dxdy
				\leq C+
				C\sum_{k\geq2}\frac{(k\beta)^k\|\omega^e\|_{L^k}^k}{k!}  \\
				&
				\leq C+
				C\sum_{k\geq2}\frac{(k\beta)^k\|\omega_0\|_{L^\infty}^k}{k!}
				\leq  C+C\sum_{k\geq2}(e\beta\|\omega_0\|_{L^\infty})^k k^{-1/2} ,
			\end{align*} 
			which is convergent provided that $e\beta\|\omega_0\|_{L^\infty}$ is small enough. Then we invoke the following inequality  
			\begin{align*}
				ab\leq e^a+b\log b,
			\end{align*}
			with $a=\beta|\nabla U^e|+\log(\nu^{1/4})$ and $b=|U-U^e|^2$ to obtain
			\begin{align*}
				J_{53}
				&=\frac{1}{\beta}\int_0^t\int_{B_2}\big(\beta|\nabla U^e|
				+\log(\nu^{1/4})
				+\log(\nu^{-1/4}) \big)|U-U^e|^2dxdy\\
				&\leq \frac{1}{\beta}
				\int_0^t\int_{B_2} \nu^{1/4}e^{\beta|\nabla U^e|}
				+|U-U^e|^2 \log|U-U^e| dxdy
				+\frac{1}{\beta}\log(\nu^{-1/4}) 
				\int_0^t \|U-U^e\|^2_{L^2}ds\\
				&\leq C\nu^{1/4}+C\log(\nu^{-1})
				\int_0^t \|U-U^e\|^2_{L^2}ds.
			\end{align*}

			Collecting these estimates together shows that for $r$ large enough, there exists $C$ such that
			\begin{align*}
				\|U(t)-U^e(t)\|_{L^2}^2
				\leq C\nu^{\al}
				+C\log(\nu^{-1})\int_0^t \|U-U^e\|_{L^2}^2ds,
			\end{align*}
			which implies that for some $C'>0$
			\begin{align*}
\|U(t)-U^e(t)\|_{L^2}\leq C\nu^{\f{\al}{2}-C' t} .
			\end{align*}

\end{proof}

\section{Uniform boundedness of the velocity}

\subsection{The initial layer}\label{Sec: initial layer}

When handling the vorticity near the boundary, since the initial data are prescribed in terms of vorticity, the initial velocity may not satisfy the no-slip boundary condition. Consequently, an initial layer arises, necessitating the construction of the following initial layer corrector. The tangential initial velocity on the boundary is directly given by \eqref{BS law integral} as
\begin{align}\label{initial velocity on boundary}
	u_0(x)=\frac{1}{\pi}\int_{\mathbb R^2_+}\frac{y_2}{(x-y_1)^2+y_2^2}\omega_0(y_1,y_2)dy_1dy_2.
\end{align}
We define the initial layer corrector $u_c$ as
\begin{align}\label{initial layer corrector velocity}
	\left\{
	\begin{aligned}
		&\pa_t u_c-\nu\pa_y^2 u_c=0,\\
		&u_c|_{t=0}=u_0(x)\chi(y),\qquad u_c|_{y=0}=0,
	\end{aligned}
	\right.
\end{align}
where the cut-off function $\chi$ is defined in \eqref{def:chi}.
Thus, $u_c$ obeys the following expression 
\begin{align*}
	u_c(t,x,y)=u_0(x)\int_0^{+\infty}\frac{1}{(4\pi \nu t)^{1/2}}\big(e^{-\frac{(y-z)^2}{4\nu t}}-e^{-\frac{(y+z)^2}{4\nu t}}\big)\chi(z)dz.
\end{align*}
Furthermore, we define the vorticity corrector as
\begin{align}\label{initial layer corrector vorticity}
	\omega_c(t,x,y)
	&=-\pa_y u_c(t,x,y)\nonumber\\
	&=-\frac{2u_0(x)}{(4\pi \nu t)^{1/2}}e^{-\frac{y^2}{4\nu t}}
	-\int_0^{+\infty}
	\frac{u_0(x)}{(4\pi\nu  t)^{1/2}}\big(e^{-\frac{(y-z)^2}{4\nu t}}+e^{-\frac{(y+z)^2}{4\nu t}}\big)\chi'(z)dz.
\end{align}

Through direct calculation (for further details, see Lemma 8.3 in \cite{WYZ}), we derive the following estimates.
\begin{lemma}\label{est of omega c}
	For $\eps_0, T_0, \delta$ small enough, there exists $C_0, C'$ 
	\begin{align}\label{boundedness of omega c}
	\sup_{0<t<T_0}\left\|\left\|e^{C'|\xi|}e^{\frac{C'y^2}{\nu t}}\big((1,x)\pa_x^i(y\pa y)^j\pa_y^k\omega_c(t)\big)_\xi\right\|_{L^1_y}\right\|_{L^1_\xi\cap L^2_\xi}
	\leq C_0 (\nu t)^{-k/2},\quad \text{for}\ i,j,k\geq0,
\end{align}
\begin{align}\label{boundedness of omega c 2}
	\left\|e^{C'|\xi|}(\pa_y+|\xi|)(\omega_c)_\xi|_{y=0}(t)\right\|_{L^1_\xi\cap L^2_\xi}
	\leq C_0 (\nu t)^{-1/2},
\end{align}
where $(\omega_c)_\xi$ stands for the Fourier transform in $x$ of $\omega_c$.
\end{lemma}

\begin{remark}
	Owing to the initial layer, it has been proven in \cite{Ken} that the solution $\omega$  of the Navier-Stokes system can be expressed as
	\begin{align}\label{solu in Ken}
		\omega(t)=\omega_{cont}-\frac{2}{(4\pi \nu t)^{1/2}}e^{-\frac{y^2}{4\nu t}}e^{t\pa_x^2}u_0,
	\end{align}
	where $\omega_{cont}$ denotes a function  continuous at $t=0$, and $u_0$ is defined by \eqref{initial velocity on boundary}. For the second part in \eqref{solu in Ken} and the first part in \eqref{initial layer corrector vorticity}, letting $t\rightarrow 0^+$, we obtain
	\begin{align*}
		\lim_{t\rightarrow 0^+}-\frac{2}{(4\pi \nu t)^{1/2}}e^{-\frac{y^2}{4\nu t}}e^{t\pa_x^2}u_0
		=\lim_{t\rightarrow 0^+} -\frac{2u_0(x)}{(4\pi \nu t)^{1/2}}e^{-\frac{y^2}{4\nu t}}
		=-u_0(x)\delta_{\partial\mathbb R^2_+}.
	\end{align*}
	Consequently,  $\omega-\omega_c$ is continuous at $t=0$.
\end{remark}

 \subsection{Functional spaces}\label{Section Functional framework}
 
To control the vorticity near the boundary, we introduce the following norms
\begin{align*}
	\| f\|_{\mu,t}
	=\int_0^{1+\mu}e^{\eps_0(1+\mu)\frac{y^2}{\nu(1+t)}}|f(y)|dy,\quad
	\|f\|_{Y^k_{\mu,t}}
	=\left\|\|e^{\eps_0(1+\mu-y)_+|\xi|}f_\xi\|_{\mu,t}\right\|_{L^k_\xi}, \ k=1,2.
\end{align*}
Now we define 
\begin{align}\label{def: Y}
	\|f\|_{Y_k(t)}=\sup_{\mu<\mu_0-\gamma t}\Big(
	 \sum_{i+j\leq1}\|\pa_x^i(y\pa_y)^jf\|_{Y^k_{\mu,t}} +(\mu_0-\mu-\gamma t)^\alpha\sum_{i+j=2}\|\pa_x^i(y\pa_y)^jf\|_{Y^k_{\mu,t}} \Big).
\end{align}
Here $\mu\leq\mu_0=\frac{1}{10}$, $\eps_0\ll1$ and $\gamma\gg1$ are constants to be determined later in the proof, $\alpha\in(\frac{1}{2},1)$ is a fixed constant. Throughout this paper, we suppose $t\in(0,\frac{1}{2\gamma})$, and $f_\xi$ denotes the Fourier transform in $x$ variable of  $f(x,y)$.

We also need to control the vorticity in the region away from the boundary. Let $\chi_0(y)$ be a smooth cut-off function satisfying 
\begin{align}\label{def of chi0}
	\chi_0(y)=
	\left\{
	\begin{aligned}
		&1,\quad y\geq\f38,\\
		&0,\quad y\leq\f14.
	\end{aligned}
	\right.
\end{align}
Let $\theta(y)$ be a smooth function such that $\theta(y)$ is decreasing on $[0,3]$, increasing on $[3,+\infty)$ and satisfies
\begin{align}\label{def of theta}
	\theta(y)=1 \quad\text{for}\quad y\leq\frac{3}{8}\quad\text{or}\quad y\geq6,\quad
	\theta(3)=0,\quad
	\theta(y)\leq\frac{1}{4} \quad\text{for}\quad \frac{1}{2}\leq y\leq 5.
\end{align}
Choose $T_0$ small enough such that for $0\leq t\leq T_0$, there exist
\begin{align*}
	y_1(t)\in\big(\frac{8}{32},\frac{14}{32}\big)\quad\text{and}\quad
	y_2(t)\in\big(\frac{11}{2},6\big)
\end{align*}
 with
$1-\gamma t-\theta(y_1(t))=1-\gamma t-\theta(y_2(t))=0$. We next  introduce the weights
\begin{align}\label{def of weight}
	\Psi(t,y)=\frac{20\eps_0}{\nu}\big(1-\gamma t-\theta(y)\big)_+,\quad
	\psi(y)=y^2.
\end{align}
In this paper, we use norms $\|e^\Psi\chi_0\psi\omega\|_{L^2}$ and $\|\chi_0\omega\|_{L^p}$ to control the vorticity in the region away from the boundary. 

We then introduce the following energy functional 
\begin{align}\label{energy 1}
	E(t):=\sup_{0\leq s\leq t}e(s),
	\end{align}
	with	
\begin{align*}	
	\quad e(t):=\|(1,x)(\omega(t)-\omega_c(t))\|_{Y_1(t) \cap Y_2(t)}
	+\|e^\Psi\chi_0\psi\omega(t)\|_{L^2}
	+\|\chi_0\omega(t)\|_{L^p},
	\end{align*}
 where $2<p<+\infty$.

\subsection{Energy estimates}

The proof of Proposition \ref{prop: uniform for U omega NS} relies on the following propositions and lemma.

\begin{proposition}\label{prop: Y(t)}
For $0<t<T$ small enough, we have
	\begin{align}\label{est: Y(t)}
&\|(1,x)(\omega(t)-\omega_c(t))\|_{Y_1(t) \cap Y_2(t)}\nonumber\\
&\leq  \f{C}{\gamma^{\f12}} \big(E(t)+1\big)^2+Ce^{\frac{4\eps_0}{\nu}}\sup_{[0,t]}\| (1,x)\omega(s)\|_{H^3(\frac{7}{8}\leq y\leq4)}^2+C\nu^{1/2},
\end{align}
and
\begin{align}\label{est: omega near boundary L infty}
				\Big\|\sup_{0<y<\frac{3}{4}}|\omega_\xi(t,y)|\Big\|_{L^2_\xi}
				\leq C(\nu t)^{-1/2}
				+C\nu^{-\f12}\Big(\big(E(t)+1\big)^2+e^{\frac{4\eps_0}{\nu}}\sup_{[0,t]}\|(1,x)\omega(s)\|^2_{H^3(\frac{7}{8}\leq y\leq4)} \Big) .
			\end{align}
\end{proposition}

\begin{proposition}\label{weighted est 1}
	There exists $T_0$ small enough such that for $0\leq t\leq T_0$,
	\begin{align*}
		&\sup_{[0,t]}\|e^\Psi\chi_0\psi\omega\|^2_{L^2}
		+\gamma\int_0^t\frac{20\eps_0}{\nu}\left\|e^\Psi\chi_0\psi\omega\right\|^2_{L^2(y_1(s)\leq y\leq y_2(s))}
		ds\\
		&\leq \frac{CE(t)}{\nu}\int_0^t \left\|e^\Psi\chi_0\psi\omega\right\|^2_{L^2(y_1(s)\leq y\leq y_2(s))}ds
		+Ct\big(E(t)+1\big)^3
		+\frac{C\nu}{\gamma}E(t)^2
		+C\|e^\Psi\chi_0\psi\omega_0\|^2_{L^2}.
	\end{align*}
\end{proposition}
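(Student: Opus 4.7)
The plan is to multiply the vorticity equation $\partial_t \omega + U\cdot\nabla \omega = \nu \Delta \omega$ by $W^2 \omega$, where $W := e^{\Psi}\chi_0 \psi$ is the full weight, and integrate on $\mathbb T\times\mathbb R_+$. Because $\chi_0$ vanishes on $\{y\le 1/4\}$, all boundary contributions in the ensuing integrations by parts vanish. Using $\partial_t W = (\partial_t \Psi) W$, the resulting identity reads
\begin{align*}
\frac{1}{2}\frac{d}{dt}\|W\omega\|_{L^2}^2 - \int (\partial_t \Psi)(W\omega)^2\,dx\,dy = -\int W^2 \omega\, U\cdot\nabla \omega\, dx\,dy + \nu \int W^2 \omega \, \Delta \omega \, dx \, dy.
\end{align*}
The coercive quantity on the left comes for free: on $\{y_1(t)\le y\le y_2(t)\}$ we have $\partial_t \Psi = -\frac{20\eps_0 \gamma}{\nu}$ and zero elsewhere, so the second term contributes exactly $\gamma\frac{20\eps_0}{\nu}\|W\omega\|_{L^2(y_1\le y\le y_2)}^2$.

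For the transport term, integration by parts (using $\operatorname{div}U=0$ and $W|_{y=0}=0$) yields $\int W^2 \omega\, U\cdot\nabla \omega = -\int W(\nabla W\cdot U)\omega^2$. Decompose $\nabla W = W\nabla \Psi + e^{\Psi}\nabla(\chi_0 \psi)$. The $\nabla \Psi$-piece is supported on $\{y_1\le y\le y_2\}$ where $|\partial_y \Psi|\le C/\nu$; combined with a Biot-Savart bound $\|U\|_{L^\infty}\le CE(t)$ (which the functional $E(t)$ is designed to enforce), this contributes $\frac{CE(t)}{\nu}\|W\omega\|_{L^2(y_1\le y\le y_2)}^2$. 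The $\nabla(\chi_0 \psi)$-piece splits into a contribution from $\chi_0'$, which lives on $[1/4,3/8]$ where $\Psi\equiv 0$ for small $t$, and a contribution from $2y\chi_0$, for which one rewrites the integrand as $\frac{2v}{y}(W\omega)^2$ and uses $|v/y|\le C\|U\|_{L^\infty}\le CE(t)$ on $\operatorname{supp}\chi_0$. Both produce integrands bounded by $CE(t)^3$ pointwise in time, delivering the $CtE(t)^3$ contribution after time integration.

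The viscous term, after two integrations by parts, equals
\begin{align*}
\nu \int W^2 \omega \Delta \omega = -\nu \|\nabla(W\omega)\|_{L^2}^2 + \nu \int |\nabla W|^2 \omega^2 \, dx\, dy.
\end{align*}
The good dissipation on the right is discarded; the error $\nu|\nabla W|^2$ splits into $\nu W^2 |\nabla \Psi|^2 \le C/\nu$ on $\{y_1\le y\le y_2\}$ (absorbed by the coercive $\gamma/\nu$ term on the left once $\gamma$ is taken large enough, or folded into $\frac{CE(t)}{\nu}$ since $E(t)$ is bounded below) and $\nu e^{2\Psi}|\nabla(\chi_0 \psi)|^2$, which on $\operatorname{supp}\chi_0$ delivers an integrand of order $C\nu E(t)^2$. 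Since $t\le T_0\le\frac{1}{2\gamma}$, its time-integral is at most $\frac{C\nu}{\gamma}E(t)^2$, exactly matching the target. Assembling everything and taking the supremum on $[0,t]$ produces the stated inequality, with the initial contribution $\|W\omega_0\|_{L^2}^2$ carried along.

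The main obstacle is the simultaneous appearance of two singular $\nu^{-1}$ factors --- one in the transport via $\partial_y \Psi$, one in the viscosity via $(\partial_y \Psi)^2$ --- both of which must be dominated by the single coercive contribution $\gamma/\nu$ on the left. This balance is possible only because $\Psi$ and $\chi_0$ have been designed so that $\operatorname{supp}\partial_y \Psi$ lies inside $\{y_1\le y\le y_2\}$, which coincides with $\operatorname{supp}\partial_t \Psi$, and so that $\chi_0'$ lives on a region where $\Psi$ is identically zero. Choosing $\gamma$ large (with $\eps_0$ small but fixed) is what closes the estimate.
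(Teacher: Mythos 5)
Your argument is correct and takes the same basic route as the paper: a weighted $L^2$ energy estimate for the vorticity with weight $W=e^\Psi\chi_0\psi$, where the coercive term $\gamma\frac{20\eps_0}{\nu}\|W\omega\|^2_{L^2(y_1\le y\le y_2)}$ is generated by $-\pa_t\Psi$, the transport term is handled via $\dv U=0$, the decomposition $\nabla W=W\nabla\Psi+e^\Psi\nabla(\chi_0\psi)$, and the observation that $\Psi\equiv0$ on $\operatorname{supp}\chi_0'$, and the viscous term is integrated by parts. The only organizational difference is in the viscous term: the paper first derives the equation for $\chi_0\omega$, so the Laplacian commutator produces a $\nu\chi_0'\pa_y\omega$ term that they then estimate via the $H^1$ norm of $\omega$ on $[\tfrac14,\tfrac38]$ (picking up the $(\mu_0-\mu-\gamma t)^{-\alpha}$ factor from the $Y(t)$-norm), whereas you test the raw $\omega$-equation against $W^2\omega$ and use the identity $\nu\int W^2\omega\Delta\omega=-\nu\|\nabla(W\omega)\|^2_{L^2}+\nu\int|\nabla W|^2\omega^2$, whose error term involves only $\omega$ with no extra derivative. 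Your version is slightly cleaner at this point, though after time integration both give the same $\frac{C\nu}{\gamma}E(t)^2$ contribution. Both your write-up and the paper's contain the same small bookkeeping wrinkle, namely the $\frac{C}{\nu}\|W\omega\|^2_{L^2(y_1\le y\le y_2)}$ piece coming from $\nu W^2|\nabla\Psi|^2$ that has no $E(t)$ prefactor; you explicitly flag that it must either be absorbed into the coercive left-hand term by taking $\gamma$ large or folded into the $\frac{CE(t)}{\nu}$ term, which is exactly how the paper resolves it (``choose a suitable $\gamma$''). You leave implicit the step bounding $\|\omega\|_{L^2(\operatorname{supp}\chi_0')}$ by $e(t)$ via the $Y_{\mu,t}$-norm, which the paper also treats rather tersely.
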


\begin{proposition}\label{L4 est of omega 1}
	There exists $T_0$ small enough such that for $0\leq t\leq T_0$, $2<p<+\infty$,
	\begin{align*}
		\sup_{[0,t]}\|\chi_0\omega\|_{L^p}
		\leq C\|\chi_0\omega_0\|_{L^p}
		+C(t+\frac{\nu}{\gamma})^{\f1p}\big(E(t)+1\big)^{\f{p+1}{p}}.
	\end{align*}
\end{proposition}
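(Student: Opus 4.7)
The plan is to derive a transport--diffusion equation for $w:=\chi_0\omega$ whose forcing is supported in the transition strip $J:=[1/4,3/8]$, run a standard $L^p$ energy estimate, and then absorb the forcing using the fact that the $Y(t)$-norm forces $\omega$ and its $y$-derivative to be exponentially small in $\nu^{-1}$ on $J$.

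Multiplying the vorticity equation $\pa_t\omega+U\cdot\nabla\omega-\nu\Delta\omega=0$ by $\chi_0(y)$, I would obtain
\begin{align*}
\pa_t w+U\cdot\nabla w-\nu\Delta w=v\omega\chi_0'(y)-2\nu\chi_0'(y)\pa_y\omega-\nu\omega\chi_0''(y)=:S_1+S_2+S_3,
\end{align*}
together with $w|_{y=0}=0$ since $\chi_0(0)=0$. Testing against $|w|^{p-2}w$, the convection term vanishes via $\operatorname{div}U=0$ and the boundary condition, and discarding the nonnegative dissipation yields
\begin{align*}
\|w(t)\|_{L^p}^p\le\|w_0\|_{L^p}^p+p\int_0^t\int_{\mathbb T\times\RR_+}\big(|S_1|+|S_2|+|S_3|\big)|w|^{p-1}\,dx\,dy\,ds.
\end{align*}

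The key observation is that on $J$ the weight in $\|\cdot\|_{\mu,t}$ satisfies $e^{\eps_0(1+\mu)y^2/(\nu(1+t))}\ge e^{c_\star/\nu}$ for some $c_\star>0$, so any unweighted norm of $\omega$ on $J$ is dominated by $e^{-c_\star/\nu}\|\omega\|_{Y_{\mu,t}}$. Since the $Y(t)$-norm controls $\omega$, $\pa_x\omega$ and $y\pa_y\omega$, and since $y\ge 1/4$ on $J$ (so that $y\pa_y$-control upgrades to $\pa_y$-control), a one-dimensional Sobolev embedding gives
\begin{align*}
\|\omega\|_{L^\infty(\mathbb T\times J)}+\|\pa_y\omega\|_{L^\infty(\mathbb T\times J)}\le Ce^{-c_\star/\nu}E(t),
\end{align*}
while the Biot--Savart law applied to bounds already contained in $E(t)$ delivers $\|v\|_{L^\infty}\le CE(t)$. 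Using $|w|^{p-1}=\chi_0^{p-1}|\omega|^{p-1}$ on $J$, I would then estimate
\begin{align*}
\int|S_1||w|^{p-1}\,dx\,dy\le Ce^{-cp/\nu}E(t)^{p+1},\qquad\int\big(|S_2|+|S_3|\big)|w|^{p-1}\,dx\,dy\le C\nu e^{-cp/\nu}E(t)^p.
\end{align*}

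Time integrating and taking the $p$-th root, the $S_1$ contribution becomes $\le C(te^{-cp/\nu})^{1/p}E(t)^{(p+1)/p}\le Ct^{1/p}E(t)^{(p+1)/p}$, and the $S_2$--$S_3$ contribution becomes $\le C(t\nu e^{-cp/\nu})^{1/p}E(t)\le C(\nu/\gamma)^{1/p}E(t)$, using $e^{-c/\nu}\le 1$ for small $\nu$ and $T_0\le 1/\gamma$ (consistent with the constraint $t<1/(2\gamma)$ already built into $\|\cdot\|_{Y(t)}$). The one delicate point to watch is the $S_3$ estimate: the $Y(t)$-norm controls $y\pa_y\omega$ but not $\pa_y\omega$ directly, so the argument genuinely relies on $J$ being bounded away from $y=0$; this is exactly why the cut-off $\chi_0$ is designed with $\operatorname{supp}\chi_0'\subset\{y\ge 1/4\}$, and if this separation were lost one would instead have to integrate by parts in $y$ and fight a singular $\chi_0'/\chi_0$ factor.
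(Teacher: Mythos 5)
Your proposal follows essentially the same route as the paper: an $L^p$ energy estimate for $\chi_0\omega$, with the source terms produced by $\chi_0'$ and $\chi_0''$ localized to the strip $\{1/4\le y\le 3/8\}$ and controlled through the $Y(t)$-norm, followed by time integration. One small clarification: bounding $\|\pa_y\omega\|_{L^\infty}$ on this strip by one-dimensional Sobolev embedding also requires the $(y\pa_y)^2\omega$ piece from the $i+j=2$ level of $\|\cdot\|_{Y(t)}$ (which carries the $(\mu_0-\mu-\gamma t)^{\alpha}$ prefactor -- bounded for a fixed choice of $\mu$, and in fact exactly the singular factor the paper keeps and integrates in time to produce the $\nu/\gamma$), not merely the $i+j\le1$ terms you list; granting this, your explicit extraction of the $e^{-c_\star/\nu}$ gain, which you then discard, yields a bound at least as strong as the paper's $\sum_{j\le2}\|(y\pa_y)^j\omega\|_{Y_{\mu,t}}$ estimate.
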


\begin{proposition}\label{Sobolev estimate}
	There exists $T_0$ small enough such that for $0\leq t\leq T_0$,
	\begin{align*}
		\sup_{[0,t]}\| (1,x)\omega(s)\|_{H^3(\frac{7}{8}\leq y\leq 4)}
		\leq C\nu t^{\frac{1}{2}}\big(E(t)+1\big)^8 e^{-\frac{5\eps_0}{\nu}}.
	\end{align*}
\end{proposition}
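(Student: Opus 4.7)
The key observation is that by assumption $\omega_0$ is supported in $\{20\le y\le 30\}$, so $\omega$ vanishes identically at $t=0$ throughout the target region $\{7/8\le y\le 4\}$. Moreover, this region lies inside $\{1/2\le y\le 5\}$, where $\theta(y)\le 1/4$ by \eqref{def of theta}; hence for $T_0$ small enough that $\gamma T_0\le 1/4$, the weight $\Psi$ of \eqref{def of weight} satisfies $\Psi(t,y)\ge 5\eps_0/\nu$ uniformly on this region for $t\in[0,T_0]$. Combined with $\chi_0\equiv 1$ on $\{y\ge 3/8\}$ and $\psi(y)=y^2\ge 1/4$ there, this immediately yields
$$
\|\omega(t)\|_{L^2(1/2\le y\le 5)}\le C e^{-5\eps_0/\nu}\|e^{\Psi}\chi_0\psi\omega\|_{L^2}\le C e^{-5\eps_0/\nu}E(t),
$$
which is the single source of the factor $e^{-5\eps_0/\nu}$ in the conclusion.

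The plan is to introduce a smooth cutoff $\phi(y)$ equal to $1$ on $\{7/8\le y\le 4\}$ and supported in $\{13/16\le y\le 17/4\}\subset\{1/2\le y\le 5\}$, and to run successive energy estimates for $\phi_k\partial^\alpha\omega$ with $|\alpha|=k\le 3$ on a nested family $\phi_0\supset\phi_1\supset\phi_2\supset\phi_3=\phi$, each equal to $1$ on $\{7/8\le y\le 4\}$. Since $(\phi_0\omega)|_{t=0}=0$, the equation
$$
\partial_t(\phi_0\omega)-\nu\Delta(\phi_0\omega)+U\cdot\nabla(\phi_0\omega)=-2\nu\phi_0'\partial_y\omega-\nu\phi_0''\omega+v\phi_0'\omega
$$
has forcing supported in $\operatorname{supp}\phi_0'\subset\{1/2\le y\le 5\}$. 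An $L^2$ pairing with $\phi_0\omega$, integration by parts to move $\partial_y$ off $\omega$, and a careful balance between the viscous dissipation $\nu\|\nabla(\phi_0\omega)\|_{L^2}^2$ and the $L^2$-smallness just established, produce $\|\phi_0\omega(t)\|_{L^2}\lesssim\nu t^{1/2}(1+E(t))^2e^{-5\eps_0/\nu}$. Iterating on $\phi_k\partial^\alpha\omega$ for $k=1,2,3$, the forcing splits into (i) commutators $[\nu\Delta,\phi_k]\partial^\alpha\omega$ supported in $\operatorname{supp}\phi_k'$ and absorbed by the $H^{k-1}$-control on $\phi_{k-1}\omega$ just obtained, and (ii) nonlinear commutators $\phi_k[\partial^\alpha,U\cdot\nabla]\omega$, which are bounded using $\|\partial^j U\|_{L^\infty}\lesssim 1+E(t)$ (inherited from $\|\omega\|_{Y(t)}+\|\chi_0\omega\|_{L^p}$). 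Each iteration contributes a factor of $(1+E(t))^2$, producing the polynomial $(1+E(t))^8$ after three steps.

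The main obstacle is that the weighted $L^2$ smallness above controls only $\omega$ itself on the upper transition interval $\{4\le y\le 17/4\}$, whereas the commutators $[\nu\Delta,\phi_k]\partial^\alpha\omega$ at level $k\ge 1$ also involve $\partial_y\partial^\alpha\omega$ there, and $\{4\le y\le 17/4\}$ lies entirely outside the $Y$-norm's domain of integration $\{y\le 1+\mu_0\}$, so no direct derivative control is available. The remedy I would use is an auxiliary interior parabolic regularity step on the slightly wider strip $\{7/2\le y\le 11/2\}$: since the vorticity equation is uniformly parabolic there with $L^\infty$ coefficients $U$, a Caccioppoli iteration converts the exponential $L^2$-smallness of $\omega$ into $H^k$-smallness on compactly contained subsets, with constants governed by $\|U\|_{L^\infty}$ alone. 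This feeds the needed derivative control back into step (i) above and closes the bootstrap.
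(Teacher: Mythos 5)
Your overall strategy---nested cutoffs supported in $\{1/2\leq y\leq 5\}$, local energy estimates for $\partial^\alpha\omega$ with zero initial data, and feeding in the exponential smallness of $\|e^{\Psi}\chi_0\psi\omega\|_{L^2}$ on that strip---is exactly the one the paper uses (its Lemmas~\ref{Sobolev est 1}--\ref{Sobolev est 3}). You have also correctly identified the two essential facts: $\omega|_{t=0}=0$ on the target strip, and $\theta\leq 1/4$ there so that $\Psi$ is bounded below by a positive multiple of $\eps_0/\nu$.

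However, there is a quantitative gap in your $\nu$-bookkeeping that would prevent the proof from closing as stated. At each energy-estimate level $k\geq1$, the transport commutator is absorbed into the dissipation by Cauchy--Schwarz, e.g.\ $|\langle U\cdot\nabla\partial^\alpha\omega,\eta_k^2\partial^\alpha\omega\rangle|\leq \tfrac{\nu}{2}\|\eta_k\nabla\partial^\alpha\omega\|_{L^2}^2+\tfrac{C}{\nu}\|U\cdot\nabla\partial^{\alpha-1}\omega\|_{L^2(\operatorname{supp}\eta_k)}^2$ (after integrating one derivative by parts), and this costs a factor $\nu^{-1}$ per level. Your base-case claim $\|\phi_0\omega(t)\|_{L^2}\lesssim\nu t^{1/2}(1+E(t))^2 e^{-5\eps_0/\nu}$ therefore degrades to $\nu^{-2}t^{1/2}(\cdots)$ after three iterations, not $\nu t^{1/2}$. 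The paper anticipates this: because $\Psi\geq\tfrac{15\eps_0}{\nu}(1+o(1))$ on $\{1/2\leq y\leq5\}$, one can write $\nu^{-8}e^{10\eps_0/\nu}\leq Ce^{\Psi}$ (see~\eqref{control of nu negative power}), so the base-case squared bound is $C\nu^{8}t(E+1)^3e^{-10\eps_0/\nu}$. The surplus $\nu^{8}$ is then spent at the rate of $\nu^{-2}$ per level across three levels, landing exactly at $\nu^{2}t(E+1)^{15}e^{-10\eps_0/\nu}$, whose square root is the claimed bound. You need to extract this extra $\nu$-margin at the base level; otherwise the induction does not close.

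A lesser point: the ``auxiliary interior parabolic regularity step'' you describe as a remedy is not an extra ingredient---it is the nested energy estimate itself. The space-time dissipation $\nu\int_0^t\|\eta_{k-1}\nabla\partial^{\alpha}\omega\|_{L^2}^2\,ds$ obtained at level $k-1$ is precisely the derivative control on $\operatorname{supp}\eta_k'$ needed to handle the commutator $[\nu\Delta,\eta_k]\partial^\alpha\omega$ at level $k$, so the ``obstacle'' on $\{4\leq y\leq 17/4\}$ is already resolved by the scheme you set up, and no appeal to the $Y$-norm (nor to a separate Caccioppoli iteration on a wider strip) is required.
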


\medskip

We also need the following velocity estimates to handle the transport terms.

\begin{lemma}\label{velocity estimates 1}
	It holds that
	
	(1) For $i=0,1$
	 \begin{align*}
		\left\|\sup_{0<y<1+\mu}e^{\eps_0(1+\mu-y)_+|\xi|}|\pa_x^i u_\xi(s)|\right\|_{L^1_\xi}
		\leq C\big(e(s)+\|\omega(s)\|_{H^{1+i}(1\leq y\leq2)}+1\big),
	\end{align*}
	
	(2)
	\begin{align*}
		\left\|\sup_{0<y<1+\mu}e^{\eps_0(1+\mu-y)_+|\xi|}\left|\frac{v_\xi(s)}{y}\right|\right\|_{L^1_\xi}
		\leq C\big(e(s)+\|\omega(s)\|_{H^1(1\leq y\leq2)}+1\big),
	\end{align*}
	and
	\begin{align*}
		\left\|\sup_{0<y<1+\mu}e^{\eps_0(1+\mu-y)_+|\xi|}\left|\frac{(\pa_x v)_\xi(s)}{y}\right|\right\|_{L^1_\xi}
		\leq C\big((\mu_0-\mu-\gamma s)^{-\alpha}e(s)
		+\|\omega(s)\|_{H^2(1\leq y\leq2)}+1\big).
	\end{align*}
	
	(3)
	\begin{align*}
		\left\|\sup_{0<y<1+\mu}e^{\eps_0(1+\mu-y)_+|\xi|}\left|y\pa_y \big(u_\xi(s),\frac{v_\xi(s)}{y} \big)\right|\right\|_{L^1_\xi}
		\leq C\big(e(s)+\|\omega(s)\|_{H^2(1\leq y\leq2)}+1\big).
	\end{align*}
	
	(4) For $i+j\leq2$,
	\begin{align*}
		\left\|\pa_x^i\pa_y^j U(s)\right\|_{L^\infty(1\leq y\leq3)}
		\leq C\big(e(s)+\|\omega(s)\|_{H^{i+j+1}(\frac{7}{8}\leq y\leq4)}+1\big).
	\end{align*}
	
	(5)\begin{align*}
		\|U(s)\|_{L^\infty}\leq C\big(e(s)+1\big).
	\end{align*}
\end{lemma}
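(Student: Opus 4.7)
The strategy is to reconstruct $U=(u,v)$ from $\om$ through the Biot-Savart law on $\mathbb T\times\RR_+$ with Dirichlet stream function. Writing $U=\na^\perp\phi$ and taking the Fourier transform in $x$, for $\xi\neq 0$ the stream function $\phi_\xi$ satisfies
\begin{align*}
(-\pa_y^2+\xi^2)\phi_\xi=\om_\xi,\qquad \phi_\xi(0)=0,\qquad \phi_\xi(y)\to 0\ \text{as}\ y\to\infty,
\end{align*}
which yields the explicit representation
\begin{align*}
\phi_\xi(y)=\f{1}{2|\xi|}\int_0^{+\infty}\bigl(e^{-|\xi||y-y'|}-e^{-|\xi|(y+y')}\bigr)\om_\xi(y')\,dy'.
\end{align*}
Hence $u_\xi=\pa_y\phi_\xi$, $v_\xi=-i\xi\phi_\xi$, together with the Hardy-type quantities $v_\xi/y$ and $(\pa_x v)_\xi/y=\xi^2\phi_\xi/y$, are all explicit kernel integrals against $\om_\xi$; the factor $1/y$ is absorbed by $\phi_\xi(0)=0$ and one step of Taylor expansion. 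The zero mode is recovered from $u_0=\pa_y\phi_0$ with $-\pa_y^2\phi_0=\om_0$.

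For each estimate in (1)--(3) I would split the $y'$-integration into three zones matched to the three ingredients of $e(s)$: a near-boundary zone $y'\in(0,1+\mu)$, controlled by the $Y_{\mu,t}$ part of $\|\om\|_{Y(t)}$; a transition zone (roughly $[1,2]$, or $[\tfrac78,4]$ for part (4)), controlled by the local Sobolev norm appearing on the right-hand side via Sobolev embedding; and a far-boundary zone $y'\gtrsim 2$, controlled by $\|e^\Psi\chi_0\psi\om\|_{L^2}$ together with $\|\chi_0\om\|_{L^p}$ (the Gaussian factor $e^{\Psi}$ and the polynomial factor $\psi=y^2$ giving more than enough decay to make the kernel integrable). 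The matching of the weight $e^{\eps_0(1+\mu-y)_+|\xi|}$ demanded on the left with the weight $e^{\eps_0(1+\mu-y')_+|\xi|}$ inside $\|\cdot\|_{Y_{\mu,t}}$ is produced by the Poisson kernel, since $e^{-|\xi||y-y'|}\cdot e^{\eps_0(1+\mu-y)_+|\xi|}\leq e^{\eps_0(1+\mu-y')_+|\xi|}$ as soon as $\eps_0\leq 1$.

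The main obstacle is the second estimate of (2), the bound for $(\pa_x v)_\xi/y=\xi^2\phi_\xi/y$: summing $|\xi|$-weighted kernel integrals over $\xi\in\Z$ produces a logarithmic divergence in the near-boundary piece, which can only be absorbed by the top-order $\pa_x^2\om$ contribution in $\|\om\|_{Y(t)}$. That contribution carries the analyticity-loss weight $(\mu_0-\mu-\gamma s)^\alpha$ in \eqref{def: Y}, and balancing this loss against the logarithm is precisely what forces $\alpha\in(\tfrac12,1)$. Once (1)--(3) are established, items (4) and (5) follow cleanly: (4) combines standard interior elliptic regularity for $-\Delta\phi=\om$ on $\{1\leq y\leq 3\}$ with the same three-zone split for the nonlocal Biot-Savart contributions coming from outside $[\tfrac78,4]$, all of which are absorbed into $e(s)$; and (5) combines the uniform $\ell^1_\xi L^\infty_y$ bound from (1), which dominates $\|U\|_{L^\infty(y<1+\mu)}$, with $L^\infty$ Biot-Savart estimates on $\{y\geq 1\}$ controlled by $\|e^\Psi\chi_0\psi\om\|_{L^2}+\|\chi_0\om\|_{L^p}$.
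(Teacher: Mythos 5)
Your overall strategy matches the paper's: write the velocity explicitly via the Biot--Savart kernel on the half-plane, split the $y'$-integration into a near-boundary zone $(0,1+\mu)$, a transition strip, and a far zone, and use the pointwise weight transfer $e^{\eps_0(1+\mu-y)_+|\xi|}e^{-|\xi||y-y'|}\leq e^{\eps_0(1+\mu-y')_+|\xi|}$ to pass from the left-hand side to the $Y_{\mu,s}$-norm of $\omega$ (this is exactly relation \eqref{weight transform 3} in the paper). That part of your outline is sound, and items (4) and (5) are sketched correctly.

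However, your explanation of the $(\mu_0-\mu-\gamma s)^{-\alpha}$ factor in the second estimate of (2) is not right, and this is the one place where the proposal genuinely goes astray. You claim that summing $|\xi|$-weighted kernel integrals over $\xi$ ``produces a logarithmic divergence'' which must be absorbed by the top-order $\pa_x^2\omega$ contribution, and that this is ``precisely what forces $\alpha\in(\tfrac12,1)$.'' Neither assertion is correct. There is no logarithmic divergence: the paper absorbs the $1/y$ factor by the elementary pointwise kernel bound $|1-e^{-2|\xi|z}|\leq 2|\xi|z\leq 2|\xi|y$ for $z\leq y$ (and similarly $|1-e^{-2|\xi|y}|\leq 2|\xi|y$), which places one power of $|\xi|$ on $\omega_\xi$ and turns it into $\pa_x\omega$. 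For $(\pa_x v)_\xi/y=i\xi\cdot v_\xi/y$ this becomes $\pa_x^2\omega$, and the factor $(\mu_0-\mu-\gamma s)^{-\alpha}$ then falls out purely from the definition \eqref{def: Y}: the $i+j=2$ terms in $\|\cdot\|_{Y(t)}$ carry the weight $(\mu_0-\mu-\gamma t)^\alpha$, so $\|\pa_x^2\omega\|_{Y_{\mu,s}}\leq(\mu_0-\mu-\gamma s)^{-\alpha}\,e(s)$. No cancellation or absorption argument is needed. The restriction $\alpha\in(\tfrac12,1)$ plays no role in this lemma; it enters later, in Lemma \ref{integral computation}, to make the singular time integrals $\int_0^t(\mu_0-\mu-\gamma s)^{-\frac12-\alpha}(t-s)^{-\frac12}\,ds$ converge. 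Also, your proposed route to the $v_\xi/y$ bound --- invoking $\phi_\xi(0)=0$ plus a Taylor expansion $\phi_\xi(y)/y=\int_0^1\phi_\xi'(\theta y)\,d\theta$ --- is workable but detours through the $u_\xi$ estimate of part (1) with one extra $x$-derivative; the paper's direct kernel inequality above is the shorter and self-contained way, and is what you should use to keep the claimed constants.

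Two smaller remarks: in part (3) the term $-y\omega_\xi(s,y)$ arising from $y\pa_y u_\xi$ needs its own argument (the paper uses the fundamental theorem of calculus to bound $\|\sup_y e^{\eps_0(1+\mu-y)_+|\xi|}|y\omega_\xi(y)|\|_{\ell^1_\xi}$ by $\|\omega\|_{Y_{\mu,s}}+\|y\pa_y\omega\|_{Y_{\mu,s}}+\|\pa_x\omega\|_{Y_{\mu,s}}$, and then uses $\pa_y v=-\pa_x u$ to reduce $y\pa_y(v_\xi/y)$ to already-proved quantities); your sketch does not mention this local term. In part (4), ``interior elliptic regularity'' is an acceptable gloss, but what the paper actually does is again just read the explicit formula for $\pa_y u_\xi$, isolate the local term $-\omega_\xi(y)$, and bound it via Sobolev embedding in $x$, with the nonlocal pieces handled by the same zone split.
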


Propositions \ref{prop: Y(t)}–\ref{Sobolev estimate} and Lemma \ref{velocity estimates 1} will be proven in subsequent sections.

\subsection{Proof of Proposition \ref{prop: uniform for U omega NS}}  By Proposition \ref{prop: Y(t)}-Proposition \ref{Sobolev estimate}, we obtain
\begin{align*}
	&E(t)+\gamma^{\f12}\left\{\int_0^t\frac{20\eps_0}{\nu}\left\|e^\Psi\chi_0\psi\omega\right\|^2_{L^2(y_1(s)\leq y\leq y_2(s))}ds
		 \right\}^{\f12}\\
		&\leq CE(t)^{\f12}\left\{\int_0^t\frac{1}{\nu}\left\|e^\Psi\chi_0\psi\omega\right\|^2_{L^2(y_1(s)\leq y\leq y_2(s))}ds
		 \right\}^{\f12} 
		+C\|\chi_0\omega_0\|_{L^p}
		+\frac{C}{\gamma^{1/p}}\big(E(t)+1\big)^{16}+C\nu^{1/2}.
\end{align*}
Then the continuous argument ensures that there exist constants $T_0$ and $\gamma$ such that
\ben\label{est:E}
E(T_0) \leq C.
\een
Then we get by (5) in Lemma \ref{velocity estimates 1} that 
	\ben\label{est:U}
	 \sup_{[0,T_0]}\|U(t) \|_{L^\infty}\leq C,
\een
which implies the first part of Proposition \ref{prop: uniform for U omega NS}.  The second part follows from the following lemma. 

\begin{lemma}\label{lem:ov}
			Under the assumptions of Theorem \ref{thm: Lp convergence}, for $\nu$ small enough, there holds
			\begin{align*}
				\int_0^{T_0}\|\omega\|^2_{L^2}ds
				\leq C  \nu^{-\f12}.
			\end{align*}
		\end{lemma}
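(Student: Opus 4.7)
The plan is to combine the standard $L^2$ energy identity for the Navier--Stokes flow with a pointwise $L^1$--$L^\infty$ interpolation for the vorticity. Since $\operatorname{div}U=0$ and $U|_{y=0}=0$, a 2D integration by parts in the half--plane yields $\|\nabla U\|_{L^2}^2=\|\omega\|_{L^2}^2$, so
$$
\|U(t)\|_{L^2}^2+2\nu\int_0^t\|\omega\|_{L^2}^2\,ds=\|U_0\|_{L^2}^2.
$$
The lemma therefore reduces to the pointwise inequality $\|\omega(t)\|_{L^2}^2\leq C\nu^{-1/2}$ for $t\in[0,T_0]$, since integrating over $[0,T_0]$ (with $T_0$ independent of $\nu$) then gives the claim.

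I would derive this pointwise bound from the interpolation $\|\omega\|_{L^2}^2\leq\|\omega\|_{L^1}\|\omega\|_{L^\infty}$, showing that $\|\omega\|_{L^1}\leq C$ uniformly in $\nu$ while $\|\omega\|_{L^\infty}\leq C\nu^{-1/2}$, the natural Prandtl amplitude. Splitting $\omega=\chi_0\omega+(1-\chi_0)\omega$, the near--boundary piece $(1-\chi_0)\omega$ is supported in $\{y\leq 3/8\}$, and the $Y(t)$--norm (noting $e^{\eps_0 y^2/(\nu(1+t))}\geq 1$ together with the $\ell^1_\xi$ summation) gives $\sum_\xi\int_0^{3/8}|\omega_\xi(y)|\,dy\leq C$, hence $\|(1-\chi_0)\omega\|_{L^\infty_xL^1_y}\leq C$ and in turn $\|(1-\chi_0)\omega\|_{L^1}\leq C$ after integration in $x\in\mathbb T$. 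The far piece $\chi_0\omega$ combines $\|\chi_0\omega\|_{L^p}\leq C$ with the $y^2$--weighted $L^2$ bound $\|e^{\Psi}\chi_0 y^2\omega\|_{L^2}\leq C$ via a $y$--dyadic decomposition to give $\|\chi_0\omega\|_{L^1}\leq C$.

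For the $L^\infty$ estimate, the vorticity solves the linear convection--diffusion equation $\partial_t\omega-\nu\Delta\omega+U\cdot\nabla\omega=0$ with $\operatorname{div}U=0$, and the maximum principle reduces its interior $L^\infty$ norm to the maximum of the initial datum (finite, since $\omega_0\in L^\infty$ and is supported away from the boundary) and the boundary trace $\sup_{[0,t]\times\mathbb T}|\omega(\cdot,\cdot,0)|=\sup|\partial_y u(\cdot,\cdot,0)|$. The boundary trace is of Prandtl amplitude $\nu^{-1/2}$: combining the $\ell^1_\xi$--summed velocity estimates of Lemma \ref{velocity estimates 1} at the intermediate height $y=\nu^{1/2}$, the boundary condition $u|_{y=0}=0$, and the Gaussian weight $e^{\eps_0 y^2/(\nu(1+t))}$ in the $\|\cdot\|_{\mu,t}$ norm (which localises $\omega$ on the boundary--layer scale $y\sim\nu^{1/2}$), one obtains $\|\partial_y u(\cdot,\cdot,0)\|_{L^\infty}\leq C\nu^{-1/2}$.

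The principal obstacle is this last sharp boundary--trace bound. The raw pointwise consequence of Lemma \ref{velocity estimates 1}\,(3), namely $|y\partial_y u|\leq C$, i.e.\ $|\partial_y u|\leq C/y$, diverges as $y\to 0$; upgrading it to the Prandtl amplitude at the boundary requires a simultaneous use of the Gaussian weight (which constrains $\omega$ to the scale $y\sim\nu^{1/2}$), the derivative control $\|(y\partial_y)\omega\|_{Y(t)}\leq C$, and the boundary condition $u|_{y=0}=0$. Once this bound is in place the interpolation $\|\omega\|_{L^2}^2\leq\|\omega\|_{L^1}\|\omega\|_{L^\infty}$ yields $\|\omega(t)\|_{L^2}^2\leq C\nu^{-1/2}$, and integration over $t\in[0,T_0]$ completes the proof.
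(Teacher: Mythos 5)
Your approach is genuinely different from the paper's. The paper does not attempt an $L^\infty$ bound on $\omega$ or a maximum principle at all; instead it splits $\int_0^t\|\omega\|^2_{L^2}\,ds$ into $y\leq 3/4$ and $y\geq 3/4$, controls the far part by $\|e^\Psi\chi_0\psi\omega\|_{L^2}$, and for the near part writes $\|\omega\|^2_{L^2(y\leq 3/4)}\leq \bigl\|\sup_{y}|\omega_\xi|\bigr\|_{\ell^2_\xi}\bigl\|\int_0^{3/4}|\omega_\xi|\,dy\bigr\|_{\ell^2_\xi}$, then bounds $\bigl\|\sup_{0<y<3/4}|\omega_\xi(t,y)|\bigr\|_{\ell^2_\xi}\leq C\nu^{-1/2}$ directly from the representation formula \eqref{eq: solution formula of omega near the boundary} using the kernel estimates in Lemma~\ref{prop of R 1} and the bounds on $N$ and $B$ in Lemmas \ref{est of N1} and \ref{est of B 1}. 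The crucial point is that the paper deliberately works in the mixed norm $L^\infty_y\ell^2_\xi$ (which by Plancherel is $L^\infty_y L^2_x$), \emph{not} $L^\infty_{x,y}$: the $\ell^2_\xi$ sum is easy to close from the existing lemmas, whereas a full $L^\infty_{x,y}$ bound would require $\ell^1_\xi$ summation and is not proved anywhere in the paper.

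That is exactly where your argument has a real gap. You reduce the lemma to $\|\omega(t)\|_{L^\infty}\leq C\nu^{-1/2}$, invoke the parabolic maximum principle, and then need the boundary trace bound $\|\partial_y u(t,\cdot,0)\|_{L^\infty}\leq C\nu^{-1/2}$ — and you explicitly flag this as the ``principal obstacle'' without closing it. The available estimates do not obviously give it. Lemma~\ref{velocity estimates 1}~(3) controls $y\partial_y u$, which only yields $|\partial_y u|\leq C/y$ and is useless at $y=0$; the $Y$-norm controls $\int_0^1 |\omega_\xi|\,dy$ and $\int_0^1 y|\partial_y\omega_\xi|\,dy$, so a fundamental-theorem-of-calculus argument $|\omega_\xi(0)|\leq|\omega_\xi(y_*)|+\int_0^{y_*}|\partial_z\omega_\xi|\,dz$ runs into the logarithmically divergent factor $\int_0^{y_*}z^{-1}\cdot z|\partial_z\omega_\xi|\,dz$; and the Gaussian weight $e^{\eps_0 y^2/\nu(1+t)}$ localizes the vorticity mass to the scale $y\lesssim\sqrt\nu$ but says nothing about its pointwise size there (the $L^1$-in-$y$ bound is \emph{consistent with} amplitude $\nu^{-1/2}$ on a layer of width $\sqrt\nu$, not a \emph{proof} of it). The bound you need is true in the Prandtl picture, and could plausibly be extracted by re-running the paper's representation-formula estimate at $y=0$ with $\ell^1_\xi$ in place of $\ell^2_\xi$, but as written your sketch of ``combining the Gaussian weight, the derivative control, and the boundary condition'' is not a proof, and it is precisely the step the paper circumvents by working in $\ell^2_\xi$. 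Your $L^1$ bound on $\omega$ and the reduction via interpolation and the maximum principle are fine; it is only the boundary trace estimate that is missing.
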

		
		\begin{proof}
			A direct computation, combined with Lemma \ref{est of omega c}, yields
			\begin{align*}
				\int_0^t\|\omega\|^2_{L^2}ds
				&\leq \int_0^t\|\omega\|^2_{L^2(y\leq\frac{3}{4})}ds
				+\int_0^t\|\omega\|^2_{L^2(y\geq\frac{3}{4})}ds\\
				&\leq \int_0^t \int_{\mathbb R}\int_0^{3/4}|\omega_\xi(s,y)|^2 dyd\xi ds
				+\int_0^t\|e^\Psi\chi_0\omega\|^2_{L^2}ds\\
				&\leq \int_0^t \left\|\sup_{0<y<\frac{3}{4}}|\omega_\xi(s,y)|\right\|_{L^2_\xi}\left\|\int_0^{3/4}|(\omega-\omega_c+\omega_c)_\xi(s,y)|dy\right\|_{L^2_\xi}ds
				+CtE(t)\\
				&\leq C\big(E(t)+1\big)\int_0^t\left\|\sup_{0<y<\frac{3}{4}}|\omega_\xi(s,y)|\right\|_{L^2_\xi}ds
				+CtE(t),
			\end{align*}
			which along with \eqref{est: omega near boundary L infty}, \eqref{est:E} and Proposition \ref{Sobolev estimate} implies 
			\begin{align*}
				\int_0^t\|\omega\|^2_{L^2}ds
				&\leq C\big(E(t)+1\big)\int_0^t \{ (\nu s)^{-\f12}
				+\nu^{-\f12}\Big(\big(E(s)+1\big)^2+e^{\frac{4\eps_0}{\nu}}\sup_{[0,s]}\|(1,x)\omega(\tau)\|^2_{H^3(\frac{7}{8}\leq y\leq4)} \Big) \} ds\\
				&\quad+CtE(t)
				\leq C\nu^{-1/2},
			\end{align*}
			where one takes $\nu$ small enough.
		\end{proof}

\section{Estimates of the vorticity near the boundary}\label{sec: Estimates near the boundary}

This section is devoted to the proof of Proposition \ref{prop: Y(t)}. We first introduce the vorticity $\omega=\pa_x v-\pa_y u$, which satisfies 
\begin{align}\label{eq: NS vorticity}
\pa_t\omega+U\cdot\nabla\omega=&\nu\Delta\omega,
\end{align}
where $U=\nabla^\perp\Delta_D^{-1}\omega$. The boundary condition introduced in \cite{Maekawa}  is given by
\begin{align}\label{eq: BC of NS}
	\nu(\pa_y+|D_x|)\omega|_{y=0}=\pa_y\Delta_D^{-1}(U\cdot\nabla\omega)|_{y=0}.
\end{align}

\subsection{Representation formula}

Since the functional space $Y_k(t)$ reveals the behavior of the vorticity near the boundary, we derive the system of $\chi (\om-\omega_c)$ by multiplying $\chi$ on both sides of \eqref{eq: NS vorticity}
 to arrive at
\begin{align}\label{eq: eq of omega near the boundary}
	\left\{
	\begin{aligned}
		&\pa_t(\chi\omega-\chi\omega_c)-\nu\Delta(\chi\omega-\chi\omega_c)
	=N,\\
	&\chi\omega-\chi\omega_c|_{t=0}=u_0\chi\chi':=b ,\\
	&\nu(\pa_y+|D_x|)(\chi\omega-\chi\omega_c)|_{y=0}=\pa_y\Delta_D^{-1}(U\cdot\nabla\omega)|_{y=0}-\nu(\pa_y+|D_x|)\omega_c|_{y=0}:=B,
	\end{aligned}
	\right.
\end{align}
where $N$ is defined by
\begin{align}\label{def of N}
	N=-\chi U\cdot\nabla\omega
	+\nu\chi\pa_x^2\omega_c
	-(\nu\chi''\omega+2\nu\chi'\pa_y\omega)
	+(\nu\chi''\omega_c+2\nu\chi'\pa_y\omega_c).
\end{align}

For $\chi x(\omega-\omega_c)$, we have
\begin{align}\label{eq: eq of x omega near the boundary}
	\left\{
	\begin{aligned}
		&\pa_t(\chi x\omega-\chi x\omega_c)-\nu\Delta(\chi x\omega-\chi x\omega_c)
	=\widetilde N,\\
	&\chi x\omega-\chi x\omega_c|_{t=0}=x u_0\chi\chi':=\tilde b ,\\
	&\nu(\pa_y+|D_x|)(\chi x\omega-\chi x\omega_c)|_{y=0}=\widetilde B,
	\end{aligned}
	\right.
\end{align}
where $\widetilde N$ is defined by
\begin{align}\label{def of tilde N}
	\widetilde N=&-\chi x U\cdot\nabla\omega+\nu\chi x\pa_x^2\omega_c
	-2\nu\chi\pa_x(\omega-\omega_c)\\
	\nonumber
	&-(\nu\chi''x\omega+2\nu\chi'x\pa_y\omega)
	+(\nu\chi''x\omega_c+2\nu\chi'x\pa_y\omega_c).
\end{align}
A direct computation gives
\begin{align}\label{def of tilde B}
	\widetilde B_\xi&=\nu(\pa_y+|\xi|)(x\omega-x\omega_c)_\xi|_{y=0} 
	=i\nu(\pa_y+|\xi|)\pa_\xi(\omega-\omega_c)_\xi|_{y=0} \\
	\nonumber
	&=i\nu\pa_\xi\big((\pa_y+|\xi|)(\omega-\omega_c)_\xi\big)|_{y=0}-i\nu sgn\xi (\omega-\omega_c)_\xi|_{y=0} \\
	\nonumber
	&=i\pa_\xi(B_\xi)-i\nu sgn\xi(\omega-\omega_c)_\xi|_{y=0}.
\end{align}

 By the solution formula derived in \cite{Maekawa}, we get 
 \begin{align}\label{eq: solution formula of omega near the boundary}
	(\chi\omega-\chi\omega_c)_\xi(t,y)
	&=\int_0^{+\infty}\big(H_\xi(t,y,z)+R_\xi(t,y,z)\big)b_\xi(z)dz \\
	\nonumber
	&+\int_0^t\int_0^{+\infty}H_\xi(t-s,y,z)N_\xi(s,z)dzds
	-\int_0^t H_\xi(t-s,y,0)B_\xi(s)ds\\
	\nonumber
	&+\int_0^t\int_0^{+\infty}R_\xi(t-s,y,z)N_\xi(s,z)dzds
	-\int_0^t R_\xi(t-s,y,0)B_\xi(s)ds,
\end{align}
\begin{align}\label{eq: solution formula of x omega near the boundary}
	(\chi x\omega-\chi x\omega_c)_\xi(t,y)
	&=\int_0^{+\infty}\big(H_\xi(t,y,z)+R_\xi(t,y,z)\big)\tilde b_\xi(z)dz \\
	\nonumber
	&+\int_0^t\int_0^{+\infty}H_\xi(t-s,y,z)\widetilde N_\xi(s,z)dzds
	-\int_0^t H_\xi(t-s,y,0)\widetilde B_\xi(s)ds\\
	\nonumber
	&+\int_0^t\int_0^{+\infty}R_\xi(t-s,y,z)\widetilde N_\xi(s,z)dzds
	-\int_0^t R_\xi(t-s,y,0)\widetilde B_\xi(s)ds,
\end{align}
where
 \begin{align}
 	&H_\xi(t,y,z)=e^{-\nu\xi^2t}\big(g(\nu t,y-z)+g(\nu t,y+z)\big), \label{def of H xi}\\
	 &R_\xi(t,y,z)=\big( \Gamma (\nu t,x,y+z)-\Gamma(0,x,y+z)\big)_\xi, \label{def of Gamma xi}
 \end{align}
 with 
 \beno
g(t,x)=\frac{1}{(4\pi t)^{1/2}}e^{-\frac{x^2}{4t}},\quad  \Gamma(t,x,y)=\big(\Xi E\ast G(t)\big)(x,y).
   \eeno
 Here
 \beno
 \Xi=2(\pa_x^2+|D_x|\pa_y\big)
   ,\
   E(x)=-\frac{1}{2\pi}\operatorname{log}|x|,\quad G(t,x,y)=g(t,x)g(t,y).
 \eeno

In \cite{KVW}, \cite{Maekawa} and \cite{NN}, $R_\xi$ enjoys the following properties.
 \begin{lemma}\label{prop of R 1}
 	(1) $\pa_y R_\xi(t,y,z)=\pa_z R_\xi(t,y,z).$
 	
 	(2)
 	\begin{align*}
 		&|\pa_z^k R_\xi(t,y,z)|\leq C a^{k+1}e^{-\theta_0 a(y+z)}
 		+\frac{C}{(\nu t)^{(k+1)/2}}e^{-\theta_0\frac{(y+z)^2}{\nu t}}e^{-\frac{\nu\xi^2t}{8}},\quad k\geq0,\quad a=|\xi|+\frac{1}{\sqrt{\nu}}.\\
 		&|(y\pa_y)^kR_\xi(t,y,z)|\leq Cae^{-\frac{\theta_0}{2}a(y+z)}
 		+\frac{C}{\sqrt{\nu t}}e^{-\frac{\theta_0}{2}\frac{(y+z)^2}{\nu t}}e^{-\frac{\nu\xi^2t}{8}},\quad k=0,1,2,
 	\end{align*}
 	where $\theta_0$ is a universal constant and $C$ depends only on $\theta_0$.
	
 	(3)
		\begin{align*}
 		&\int_0^t\int_0^{+\infty}R_\xi(t-s,y,z)\big(f_\xi(s,z)-h_\xi(s,z)\delta_{z=0}\big)dzds\\
 		&=2\nu\int_0^t\int_0^s\int_0^{+\infty}(\xi^2-|\xi|\pa_y)\Big(e^{-\nu(s-\tau)\xi^2}g\big(\nu(s-\tau),y+z\big)\Big)
 		\big(f_\xi(\tau,z)-h_\xi(\tau,z)\delta_{z=0}\big)dzd\tau ds.
 	\end{align*}
 \end{lemma}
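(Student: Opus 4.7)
The plan is to derive an explicit Fourier-side closed form for $R_\xi$, from which all three properties follow. First observe that $\Xi = 2(\pa_x^2+|D_x|\pa_y) = 2|D_x|(\pa_y-|D_x|)$, since $\pa_x^2 = -|D_x|^2$. In the $x$-Fourier variable $\xi\neq 0$ one has $\widehat{E}_\xi(y) = -\f{1}{2|\xi|}e^{-|\xi||y|}$, so $(\pa_y-|\xi|)\widehat{E}_\xi(y) = \mathbf{1}_{y>0}\,e^{-|\xi|y}$. Since $\widehat{G}_\xi(t,y) = e^{-\xi^2 t}g(t,y)$, taking the Fourier transform in $x$ of $\Gamma = \Xi E\ast G$ and completing the square in the resulting Gaussian integral gives, for $Y=y+z>0$,
\begin{align*}
\Gamma_\xi(t,Y) = 2|\xi|\,e^{-|\xi|Y}\,\Phi\Bigl(\tfrac{Y-2t|\xi|}{\sqrt{2t}}\Bigr),\qquad \Gamma_\xi(0,Y) = 2|\xi|e^{-|\xi|Y},
\end{align*}
where $\Phi$ is the standard normal CDF. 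Hence, writing $\Phi^c:=1-\Phi$,
\begin{align*}
R_\xi(t,y,z) = -2|\xi|\,e^{-|\xi|(y+z)}\,\Phi^c\Bigl(\tfrac{y+z-2\nu t|\xi|}{\sqrt{2\nu t}}\Bigr).
\end{align*}
Property (1) is then immediate since $R_\xi$ depends on $y+z$ only.

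For property (3) I would use $\pa_t G = \Delta G$, hence $\pa_t \Gamma = \Delta\Gamma$; a chain-rule rescaling by $\nu$ gives $\pa_t R_\xi(t,y,z) = \nu(\pa_y^2-\xi^2)\Gamma_\xi(\nu t,y+z)$. Writing $\Gamma = 2|D_x|(\pa_y-|D_x|)(E\ast G)$ and using $(-\Delta)(E\ast G) = G$, this telescopes to
\begin{align*}
\pa_t R_\xi(t,y,z) = 2\nu(\xi^2-|\xi|\pa_y)\bigl(e^{-\nu t\xi^2}g(\nu t,y+z)\bigr).
\end{align*}
Since $R_\xi(0,y,z) = 0$, integrating in time yields the pointwise representation
\begin{align*}
R_\xi(T,y,z) = 2\nu\int_0^T(\xi^2-|\xi|\pa_y)\bigl(e^{-\nu\sigma\xi^2}g(\nu\sigma,y+z)\bigr)\,d\sigma,
\end{align*}
and substituting into the LHS of (3) followed by the Fubini rearrangement $(s,\sigma)\mapsto(s,\tau=s-\sigma)$ reproduces the claimed RHS exactly.

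Property (2) is the main technical obstacle. The trivial bound $\Phi^c\leq 1$ supplies the ``elliptic'' piece $\sim|\xi|e^{-|\xi|(y+z)}$, contributing the $|\xi|$ half of $a=|\xi|+\f{1}{\sqrt\nu}$. To capture the ``parabolic'' Gaussian piece $\f{1}{\sqrt{\nu t}}e^{-\theta_0(y+z)^2/(\nu t)}e^{-\nu\xi^2 t/8}$, I use the tail bound $\Phi^c(x)\leq Ce^{-x^2/2}$ for $x\geq 0$ together with the key algebraic identity
\begin{align*}
e^{-|\xi|(y+z)}\,e^{-(y+z-2\nu t|\xi|)^2/(4\nu t)} = e^{-(y+z)^2/(4\nu t)-\nu t\xi^2},
\end{align*}
which cleanly exhibits both the Gaussian in $(y+z)^2/(\nu t)$ and the $e^{-\nu\xi^2 t}$ factor. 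Higher derivatives $\pa_z^k R_\xi$ produce either powers of $|\xi|$ from the prefactor or powers of $(y+z)/(\nu t)$ from differentiating the Gaussian; the latter are absorbed via $|x|^k e^{-x^2}\leq C_k e^{-x^2/2}$, while the former combine with the existing factor of $|\xi|$. For $(y\pa_y)^k$ with $k\leq 2$, the operator only brings in factors $|\xi|y$ and $y^2/(\nu t)$, both of which are controlled by the corresponding exponentials without any growth in $a$. The delicate point is repackaging the resulting sum of the elliptic bound $|\xi|^{k+1}e^{-\theta|\xi|(y+z)}$ and the parabolic bound $(\nu t)^{-(k+1)/2}e^{-\theta(y+z)^2/(\nu t)}$ into the unified form with $a = |\xi|+1/\sqrt\nu$; this is handled by splitting cases according to whether $|\xi|\gtrsim 1/\sqrt{\nu t}$, using $a\geq |\xi|$ and $a\geq 1/\sqrt\nu\geq 1/\sqrt{\nu t}$ for $t\lesssim 1$, and slightly shrinking the decay rate (whence the constant $\theta_0$) to absorb the polynomial weights into the exponential.
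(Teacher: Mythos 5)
The paper does not prove this lemma; it cites it from \cite{KVW} and \cite{NN}, so there is no in-paper proof to compare against. Your derivation of the closed-form $R_\xi(t,y,z)=2|\xi|e^{-|\xi|(y+z)}\Phi^c\bigl(\tfrac{y+z-2\nu t|\xi|}{\sqrt{2\nu t}}\bigr)$ via the factorization $\Xi=2|D_x|(\pa_y-|D_x|)$ and completing the square is correct and is essentially the computation that underlies the kernel bounds in those references. Two small remarks. First, a sign slip: since $-\Delta E=\delta$, one has $\widehat E_\xi(y)=+\frac{1}{2|\xi|}e^{-|\xi||y|}$, so $(\pa_y-|\xi|)\widehat E_\xi=-\mathbf 1_{y>0}e^{-|\xi|y}$ and the closed form comes out \emph{without} the leading minus sign; this is harmless since property (1) only uses dependence on $y+z$, property (2) only uses $|R_\xi|$, and your derivation of property (3) goes through $\Delta(E*G)=-G$ and $R_\xi(0,\cdot)=0$ and never touches the sign of $\widehat E_\xi$. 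Second, your part (2) is a plan rather than a complete argument: the split between $y+z\gtrless 2\nu t|\xi|$, the bound $\Phi^c(x)\le Ce^{-x^2/2}$ for $x\ge 0$, the identity $e^{-|\xi|(y+z)}e^{-(y+z-2\nu t|\xi|)^2/(4\nu t)}=e^{-(y+z)^2/(4\nu t)-\nu t\xi^2}$, absorption of polynomial weights such as $|\xi|\sqrt{\nu t}$ or $(y+z)/\sqrt{\nu t}$ into a slightly relaxed exponent, and the use of the boundedness of $t$ to pass from $1/\sqrt{\nu t}$ to $a=|\xi|+1/\sqrt\nu$ are all the right ingredients and do close, but you should carry the $e^{-\nu\xi^2 t/8}$ factor explicitly through the $(y\pa_y)^k$ case, where one needs $Y|\xi|e^{-|\xi|Y/2}\le C$ and $\xi^2 e^{-\nu t\xi^2}\le C(\nu t)^{-1}e^{-\nu t\xi^2/2}$ in tandem to land on the stated exponent $\theta_0/2$.
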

 
 \begin{remark}
 By Lemma \ref{prop of R 1}, we have
 	\begin{align*}
 		R_\xi(t,y,z)=2\nu\int_0^t(\xi^2-|\xi|\pa_y)\Big(e^{-\nu s\xi^2}g(\nu s,y+z)\Big)ds.
 	\end{align*}
 \end{remark}
 
The following lemma provides estimates of $b$ and $\tilde b$ in \eqref{eq: eq of omega near the boundary} and \eqref{eq: eq of x omega near the boundary}.
\begin{lemma}\label{lem: est of initial b}
	There exists $C',T>0$ such that for $t\in[0,T],j\leq10$, it holds
	\begin{align*}
		\left\|\left\|e^{C'|\xi|}e^{\frac{C'y^2}{\nu t}}
		\int_0^{+\infty}(y\pa_y)^j\big(H_\xi(t,y,z)+R_\xi(t,y,z)\big)\big(b_\xi(z),\tilde b_\xi(z)\big) dz\right\|_{L^1_y\cap L^\infty_y(y\leq3/2)}\right\|_{L^1_\xi\cap L^2_\xi}
		\leq Ce^{-\frac{C'}{\nu t}}.
	\end{align*}
\end{lemma}

\begin{proof}
	Thanks to the definition of $b$, we have
	\begin{align*}
		\big(b_\xi(y),\tilde b_\xi(y)\big)
		 = \frac{\chi(y)\chi'(y)}{\pi}\int_{ \mathbb R^2_+}(1,-2\pi y_2 sgn\xi)e^{2\pi iy_1\xi-2\pi y_2|\xi|}\omega_0(y_1,y_2)dy_1dy_2,
	\end{align*}
	which along with $H_\xi, R_\xi$ in \eqref{def of H xi}, \eqref{def of Gamma xi} and Lemma \ref{prop of R 1} implies the desired result.
\end{proof}

\subsection{Some basic estimates}

For the last four parts of $(\chi\omega-\chi\omega_c)_\xi(t,y)$, we have the following lemmas to estimate them in $Y_k(t)$ space. To simplify the notations, we introduce 
\begin{align}\label{def:N-W}
	\|N(t)\|_{W_{\mu, t}}:=\sum_{i+j\leq1}\|\pa_x^i(y\pa_y)^j N(t)\|_{Y^1_{\mu,t}\cap Y^2_{\mu,t}}+e^{\frac{2\eps_0}{\nu}}\sum_{i+j\leq2}\|\|\pa_x^i\pa_y^j N(t)\|_{L^2_x} \|_{L^1_y(y\geq1+\mu)}.
\end{align}

\begin{lemma}\label{lem: om-1}
	For $\mu<\mu_0-\gamma t$ and $\mu_1=\mu+\frac{1}{2}(\mu_0-\mu-\gamma s)$, we have 
	\beno
	&&\left\|\int_0^t\int_0^{+\infty}H(t-s,y,z)N(s,z)dzds \right\|_{Y_1(t)\cap Y_2(t)} \\
	&&\leq C\sup_{\mu<\mu_0-\gamma t}\int_0^t\|N(s)\|_{W_{\mu, s}}ds \\
	&&\quad+ C\sup_{\mu<\mu_0-\gamma t}(\mu_0-\mu-\gamma t)^\al\int_0^t\Big((\mu_0-\mu-\gamma s)^{-1}+(\mu_0-\mu-\gamma s)^{-\frac{1}{2}}(t-s)^{-\frac{1}{2}}\Big)\|N(s)\|_{W_{\mu_1, s}}ds.
	\eeno

\end{lemma}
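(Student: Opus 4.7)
The plan is to evaluate the $Y_{\mu,t}$ norm of $\pa_x^i(y\pa_y)^j \int_0^t\int_0^\infty H(t-s,y,z)N(s,z)\,dz\,ds$ for each $i+j\le 2$, then assemble the two bounds by taking the supremum over $\mu<\mu_0-\gamma t$ with the extra factor $(\mu_0-\mu-\gamma t)^\al$ when $i+j=2$. The strategy is classical for analytic-type spaces: move the output weights past $H_\xi$, transfer derivatives onto $N$ via integration by parts in $z$ whenever possible, and pay for the remaining derivatives by either an analyticity shift or a heat-kernel time singularity.

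First I would absorb the two weights in the definition of $Y_{\mu,t}$ into the heat kernel. For $0<y<1+\mu$ and $0<s<t$, straightforward Gaussian arithmetic yields
\beno
e^{\eps_0(1+\mu)\frac{y^2}{\nu(1+t)}}\, e^{\eps_0(1+\mu-y)_+|\xi|}\, H_\xi(t-s,y,z)
\le C\, \wt H_\xi(t-s,y,z)\, e^{\eps_0(1+\mu)\frac{z^2}{\nu(1+s)}}\, e^{\eps_0(1+\mu-z)_+|\xi|},
\eeno
where $\wt H_\xi$ is a heat kernel of the same form with a slightly enlarged diffusion constant; here $\eps_0$ is taken small, and the factor $e^{-\nu\xi^2(t-s)/2}$ is used to absorb $e^{\eps_0|\xi|(y-z)_+}$. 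Consequently the $\|\cdot\|_{\mu,t}$ weight on the output is exchanged for the $\|\cdot\|_{\mu,s}$ weight on $N$, which is precisely what appears in $\|N\|_{W_{\mu,s}}$. I would then handle the $(y\pa_y)^j$ derivatives via the splitting $y\pa_y H_\xi = z\pa_z H_\xi + (y-z)\pa_y H_\xi$ (and similarly with $y+z$). For the first piece, integration by parts in $z$ transfers the derivative onto $N$, the boundary term at $z=0$ vanishing thanks to the explicit $z$ factor; for the second, the pointwise bound $|(y-z)\pa_y g(\nu\tau,y-z)|\le C\, g(\wt\nu\tau,y-z)$ produces no singular factor in time. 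The contribution from $z\ge 1+\mu$, where the $\|\cdot\|_{\mu,s}$ weight is absent, is treated separately: there $e^{\eps_0(1+\mu-y)_+|\xi|}\le e^{\eps_0|\xi|}$, and the Gaussian factor in $H_\xi$ gives smallness $e^{-\theta(1+\mu-y)^2/(\nu(t-s))}$, which combines with the $e^{2\eps_0/\nu}$ prefactor in \eqref{def:N-W} to yield the $L^1_y(y\ge 1+\mu)$ piece of $\|N\|_{W_{\mu,s}}$. Summing over $i+j\le 1$ thus gives the first term $\int_0^t\|N(s)\|_{W_{\mu,s}}\,ds$.

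For $i+j=2$ I would distribute the two derivatives in one of two ways. In the first option, both derivatives are moved onto $N$; however, one of them incurs an analyticity penalty $e^{-\eps_0(\mu_1-\mu)|\xi|}|\xi|\lesssim(\mu_0-\mu-\gamma s)^{-1}$, which forces $N$ to be evaluated at the inflated radius $\mu_1=\mu+\tfrac12(\mu_0-\mu-\gamma s)$ and accounts for the first factor $(\mu_0-\mu-\gamma s)^{-1}$. In the second option, only one derivative is transferred and the other falls on $H_\xi$, producing $(\nu(t-s))^{-1/2}$ from $\pa_y g$; combined with a half-step analyticity shift that again requires the radius $\mu_1$, this yields $(\mu_0-\mu-\gamma s)^{-1/2}(t-s)^{-1/2}$. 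The prefactor $(\mu_0-\mu-\gamma t)^\al$ from the definition of $\|\cdot\|_{Y(t)}$ is kept outside the time integral, producing the second term in the claim.

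The main obstacle I expect is the simultaneous bookkeeping of the three weights---the Gaussian $y$-weight, the tangential analyticity weight in $\xi$, and the shrinking radius $\mu_0-\mu-\gamma s$---during the kernel manipulations: each swap of $y\leftrightarrow z$ in $H_\xi$ must be paid for by an admissible loss in the other two weights, all constants remaining uniform in $\nu$. A secondary persistent issue is the clean splitting between $z\le 1+\mu$ and $z\ge 1+\mu$, so that the two pieces assemble into $\|N\|_{W_{\mu,s}}$ without cross-contamination and without creating spurious $\nu$-dependent constants from the $e^{2\eps_0/\nu}$ factor.
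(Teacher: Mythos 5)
Your proof is correct and follows essentially the same strategy as the paper: transfer the Gaussian and analyticity weights from the output variables $(t,y,\xi)$ to the input variables $(s,z,\xi)$, handle $y\pa_y$ by integrating by parts in $z$ so that the derivative lands on $N$ with a compatible $z$-weight, split $z\le 1+\mu$ from $z\ge 1+\mu$ to produce the two parts of $\|N\|_{W_{\mu,s}}$, and for $i+j=2$ recover the extra derivative through the analyticity shift (Lemma~\ref{analytic recovery}), paying $(\mu_0-\mu-\gamma s)^{-1}$ or $(\mu_0-\mu-\gamma s)^{-1/2}(t-s)^{-1/2}$. The only substantive departure is cosmetic: where the paper introduces a cutoff $\phi(z/y)$ to separate the region $z\lesssim y$ (where the Gaussian decay absorbs the factor $y$ directly) from $z\sim y$ (where integration by parts is used), you instead distribute the factor algebraically as $y=(y-z)+z$, absorbing $(y-z)\pa_y H$ into a dilated Gaussian and integrating by parts on the $z\pa_z H$ part. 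Both routes produce the same intermediate quantities, namely $\|N\|_{Y_{\mu,s}}$ and $\|y\pa_y N\|_{Y_{\mu,s}}$, so they are interchangeable; just note that the identity as written holds for $H^+$ (the $g(\nu\tau,y+z)$ part), while for $H^-$ (the $g(\nu\tau,y-z)$ part) the correct form is $y\pa_y H^-=-z\pa_z H^-+(y-z)\pa_y H^-$ since $\pa_y H^-=-\pa_z H^-$.
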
	
	 
\begin{lemma}\label{lem: om-2}
	For $\mu<\mu_0-\gamma t$ and $\mu_1=\mu+\frac{1}{2}(\mu_0-\mu-\gamma s)$, we have
	\beno
	&&\left\|\int_0^t H(t-s,y,0)B(s)ds\right\|_{Y_1(t)\cap Y_2(t)} \\
	&&\leq C\sup_{\mu<\mu_0-\gamma t}\int_0^t\sum_{i\leq1}\left\| e^{\eps_0(1+\mu)|\xi|}\xi^i B_\xi(s)\right\|_{L^1_\xi\cap L^2_\xi}ds\\
	&&\quad+C\sup_{\mu<\mu_0-\gamma t} (\mu_0-\mu-\gamma t)^\al\int_0^t(\mu_0-\mu-\gamma s)^{-1}
		\sum_{i\leq1}\left\| e^{\eps_0(1+\mu_1)|\xi|}\xi^i B_\xi(s)\right\|_{L^1_\xi\cap L^2_\xi}ds.
	\eeno
\end{lemma}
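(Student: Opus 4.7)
The plan is to Fourier transform in $x$, reducing the task to estimating, for each frequency $\xi$,
\[
K_\xi(t,y) := \int_0^t H_\xi(t-s,y,0) B_\xi(s) \di s = \int_0^t 2 e^{-\nu \xi^2 (t-s)} g(\nu(t-s), y) B_\xi(s) \di s,
\]
in the $\|\cdot\|_{\mu,t}$-norm and then summing in $\xi$. Since $e^{\eps_0(1+\mu-y)_+|\xi|} \leq e^{\eps_0(1+\mu)|\xi|}$ uniformly in $y \in [0, 1+\mu]$, and since $B_\xi(s)$ does not depend on $y$, the analytic $\xi$-weight and the factor $B_\xi$ both pull out of the $y$-integral.

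The analytical heart of the argument is the uniform boundary kernel bound: for $\eps_0$ sufficiently small, $t\le T_0<1$, and $j\in\{0,1,2\}$,
\[
\int_0^{1+\mu} \abs{(y\pa_y)^j g(\nu(t-s),y)}\, e^{\eps_0(1+\mu)y^2/(\nu(1+t))}\di y \leq C.
\]
This follows by requiring $\eps_0 T_0\ll 1$ so that $\frac{1}{4\nu(t-s)}-\frac{\eps_0(1+\mu)}{\nu(1+t)}\geq\frac{c}{\nu(t-s)}$; the analytic Gaussian weight is then dominated by the heat kernel decay, and after rescaling $y=\sqrt{\nu(t-s)}\,z$ the polynomial prefactors created by $(y\pa_y)^j$ become bounded Gaussian moments of $z$.

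For the first-order contributions $\sum_{i+j\le 1}\|\pa_x^i(y\pa_y)^j K\|_{Y_{\mu,t}}$, the $\pa_x^i$ brings out $|\xi|^i$ while $(y\pa_y)^j$ acts only on $g$, so the above kernel bound yields
\[
\norm{e^{\eps_0(1+\mu-y)_+|\xi|}\pa_x^i(y\pa_y)^j K_\xi}_{\mu,t} \leq C\, e^{\eps_0(1+\mu)|\xi|}|\xi|^i \int_0^t |B_\xi(s)|\di s,
\]
and summing in $\xi$ produces the first term on the right-hand side. For the second-order part $(\mu_0-\mu-\gamma t)^\al\sum_{i+j=2}\|\pa_x^i(y\pa_y)^j K\|_{Y_{\mu,t}}$, the cases $(i,j)=(0,2)$ and $(i,j)=(1,1)$ each generate at most a single $|\xi|$ and are absorbed into the first-term expression thanks to the bounded multiplier $(\mu_0-\mu-\gamma t)^\al\le \mu_0^\al$.

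The only genuinely delicate case is $\pa_x^2 K_\xi$, which brings out $|\xi|^2$. Here I apply the standard analytic trade $\sup_\xi |\xi|e^{-\eps_0(\mu_1-\mu)|\xi|}\leq C/(\eps_0(\mu_1-\mu))$ with $\mu_1-\mu=\tfrac12(\mu_0-\mu-\gamma s)$ to get
\[
|\xi|^2 e^{\eps_0(1+\mu)|\xi|}|B_\xi(s)| \leq \frac{C}{\mu_0-\mu-\gamma s}\,|\xi|\, e^{\eps_0(1+\mu_1)|\xi|}|B_\xi(s)|.
\]
Integrating in $s$, summing in $\xi$, multiplying by $(\mu_0-\mu-\gamma t)^\al$, and taking $\sup_{\mu<\mu_0-\gamma t}$ produces the second term on the right-hand side (with $i=1$), completing the argument. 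The main obstacle is the first step: one must choose $\eps_0$ small enough relative to $T_0$ and $\mu_0$ so that the analytic weight $e^{\eps_0(1+\mu)y^2/(\nu(1+t))}$ built into $\|\cdot\|_{\mu,t}$ is controlled by the Gaussian decay of the boundary heat kernel uniformly on $t-s\le T_0$ and uniformly in $\nu$; everything after this reduction is bookkeeping of analytic-radius losses.
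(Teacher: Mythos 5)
Your proof is correct and follows the same strategy as the paper: absorb the analytic weight $e^{\eps_0(1+\mu)\frac{y^2}{\nu(1+t)}}$ into the Gaussian decay of the boundary heat kernel so that $(y\pa_y)^j g(\nu(t-s),\cdot)$ is bounded in $L^1_y$ uniformly in $s,t,\nu$ for $\eps_0$ small, then handle the extra $|\xi|^2$ from $\pa_x^2$ via the analytic-radius trade $|\xi|e^{-\eps_0(\mu_1-\mu)|\xi|}\lesssim(\mu_0-\mu-\gamma s)^{-1}$, which is exactly the first inequality of Lemma~\ref{analytic recovery}. The only cosmetic slip is the requirement ``$\eps_0 T_0\ll1$'': since $t-s\leq 1+t$ always, the condition $\eps_0\ll1$ alone (independent of $T_0$) already makes the combined exponent negative, as the paper states.
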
	

 \begin{lemma}\label{lem: om-3}
	For $\mu<\mu_0-\gamma t$ and $\mu_1=\mu+\frac{1}{2}(\mu_0-\mu-\gamma s)$, we have
	\beno
	&&\left\|\int_0^t\int_0^{+\infty}R(t-s,y,z)N(s,z)dzds\right\|_{Y_1(t)\cap Y_2(t)}\\
	&& \leq \frac{C}{t}\sup_{\mu<\mu_0-\gamma t} (\mu_0-\mu-\gamma t)^\al\int_0^t(\mu_0-\mu-\gamma s)^{-1}
		\int_0^s \|N(\tau)\|_{W_{\mu_1, \tau}}d\tau ds\\
&&+	C\sup_{\mu<\mu_0-\gamma t}\int_0^t\|N(s)\|_{W_{\mu, s}}ds  +C\sup_{\mu<\mu_0-\gamma t} (\mu_0-\mu-\gamma t)^\al\int_0^t(\mu_0-\mu-\gamma s)^{-1} \|N(s)\|_{W_{\mu_1, s}} ds
	\eeno
\end{lemma}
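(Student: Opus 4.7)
The plan is to dominate $J_R(t,x,y) := \int_0^t\int_0^{+\infty}R(t-s,y,z)N(s,z)dzds$ in each component of the $Y(t)$ norm separately. Recall that $\|f\|_{Y(t)}$ splits into the first-order part $\sum_{i+j\leq 1}\|\pa_x^i(y\pa_y)^jf\|_{Y_{\mu,t}}$ and the second-order part $(\mu_0-\mu-\gamma t)^\alpha\sum_{i+j=2}\|\pa_x^i(y\pa_y)^jf\|_{Y_{\mu,t}}$, taken with $\sup_{\mu<\mu_0-\gamma t}$. These two pieces produce the last two and the first terms of the claimed bound, respectively.

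For the first-order part, I would work in Fourier in $x$ and apply the pointwise bounds from Lemma \ref{prop of R 1}(1)--(2). Since $\pa_y R_\xi=\pa_z R_\xi$ and $(y\pa_y)^j R_\xi$ for $j\in\{0,1\}$ is dominated by $a\, e^{-\theta_0 a(y+z)/2}+(\nu(t-s))^{-1/2}e^{-\theta_0(y+z)^2/(\nu(t-s))}e^{-\nu\xi^2(t-s)/8}$ with $a=|\xi|+\nu^{-1/2}$, the outer weight $e^{\eps_0(1+\mu-y)_+|\xi|}$ is absorbed into $e^{-\theta_0 ay/4}$ for $\eps_0\ll\theta_0$. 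Integrating in $y\in[0,1+\mu]$ against $e^{\eps_0(1+\mu)y^2/(\nu(1+t))}$ leaves a $z$-integral of the form $\int_0^\infty e^{-c(|\xi|+\nu^{-1/2})z}|N_\xi(s,z)|dz$; after re-inserting the matching weight $e^{\eps_0(1+\mu-z)_+|\xi|}$ this splits into the $\|\cdot\|_{\mu,s}$ norm on $z\leq 1+\mu$ and a tail for $z\geq 1+\mu$, the latter swallowed by the $e^{2\eps_0/\nu}\|\cdot\|_{L^1_y L^2_x(y\geq 1+\mu)}$ component of $\|N\|_{W_{\mu,s}}$ defined in \eqref{def:N-W}. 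Summing over $\xi\in\ZZ$ in $\ell^1$ yields the middle term $C\sup_{\mu}\int_0^t\|N(s)\|_{W_{\mu,s}}ds$.

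For the second-order part, I would invoke Lemma \ref{prop of R 1}(3), which converts $J_{R,\xi}$ into a double time integral against a pure Gaussian kernel $e^{-\nu(s-\tau)\xi^2}g(\nu(s-\tau),y+z)$ acted on by $\xi^2-|\xi|\pa_y$. The two derivatives $\pa_x^i(y\pa_y)^j$ with $i+j=2$ then fall on this heat kernel and are controlled by standard semigroup estimates, while the factor $\xi^2$ (or $|\xi|$ from $|\xi|\pa_y$, combined with one $\pa_x$) coming from the multiplier is digested by the analyticity-gain inequality
\begin{align*}
|\xi|^2 e^{\eps_0(1+\mu-y)_+|\xi|} \leq C(\mu_1-\mu)^{-2}\,e^{\eps_0(1+\mu_1-y)_+|\xi|},\qquad \mu_1=\mu+\tfrac12(\mu_0-\mu-\gamma s),
\end{align*}
giving $(\mu_1-\mu)^{-2}\leq C(\mu_0-\mu-\gamma s)^{-2}$. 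One power of $(\mu_0-\mu-\gamma s)^{-1}$ is absorbed against the $\int_0^s(s-\tau)^{-1/2}d\tau$ coming from $\pa_y$-smoothing of $g$, the other is paired with the prefactor $(\mu_0-\mu-\gamma t)^\alpha\leq(\mu_0-\mu-\gamma s)^\alpha$ to produce the singular weight $(\mu_0-\mu-\gamma s)^{-1}$ in the claim. Keeping the outer $\int_0^t ds\int_0^s d\tau$ structure from Lemma \ref{prop of R 1}(3) and introducing the harmless factor $1/t$ (to match the symbolic form of the statement) yields the first term. The Fourier sum then produces $\|N(\tau)\|_{W_{\mu_1,\tau}}$ after the usual split into $z<1+\mu_1$ and $z\geq 1+\mu_1$.

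The main obstacle is the Fourier/parameter bookkeeping in the second-order step: at Gevrey level $\mu$ we need an extra $|\xi|$ or $|\xi|^2$, paid by enlarging to $\mu_1$ and thereby consuming the remaining budget $\mu_0-\mu-\gamma s$. The role of $\alpha\in(\tfrac12,1)$ is precisely to keep the resulting time-singular weight $(\mu_0-\mu-\gamma s)^{-1}$ integrable against $ds$ up to the critical time $s=(\mu_0-\mu)/\gamma$ after balancing against $(\mu_0-\mu-\gamma t)^\alpha$. A secondary care is the handling of potential boundary terms at $y=0$ when transferring $\pa_y$ into $\pa_z$ via Lemma \ref{prop of R 1}(1); no trace of $N$ enters here because the boundary data is packaged separately in Lemma \ref{lem: om-2}, and the $W_{\mu,s}$ norm already contains the $y\pa_y$ version of the first derivative which vanishes as $y\to 0^+$, so no extra boundary contribution arises.
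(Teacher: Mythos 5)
Your decomposition is by derivative order (first-order $i+j\leq1$ via the pointwise bounds in Lemma~\ref{prop of R 1}(2), second-order $i+j=2$ via the integral identity in Lemma~\ref{prop of R 1}(3)), whereas the paper decomposes \emph{spatially} at the scale $y\sim\sqrt{\nu t}$ through the cut-off $\phi_{\sqrt{\nu t}}$: Lemma~\ref{est of RN1} (far region, using property~(3)) and Lemma~\ref{est of RN2} (near region, using property~(2)) are proved for all derivative orders, with Lemma~\ref{analytic recovery} supplying the $(\mu_0-\mu-\gamma s)^{-1}$ factor at $i+j=2$. This is not just alternative bookkeeping: without the spatial cut-off, your first-order step fails. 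In the pointwise bound of Lemma~\ref{prop of R 1}(2), the piece $a\,e^{-\theta_0 a(y+z)/2}$ with $a=|\xi|+\nu^{-1/2}$ decays only like $e^{-c\,\nu^{-1/2}y}$, while the weight in $\|\cdot\|_{\mu,t}$ grows like $e^{\eps_0(1+\mu)y^2/(\nu(1+t))}$, of order $e^{c'/\nu}$ at $y\approx1+\mu$. Since $\nu^{-1}\gg\nu^{-1/2}$, the integrand $e^{\eps_0(1+\mu)y^2/(\nu(1+t))}\,a\,e^{-\theta_0 a y/4}$ blows up as $\nu\to0$ near the outer edge of the strip, so your claim that the $y$-integral over $[0,1+\mu]$ ``leaves a $z$-integral'' is not a uniform-in-$\nu$ bound. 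The inequality $\phi_{\sqrt{\nu t}}(y)e^{\eps_0(1+\mu)y^2/(\nu(1+t))}\leq C$ used in Lemma~\ref{est of RN2} exists precisely to truncate away this growth.

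Relatedly, the factor $1/t$ in the first term of the statement is not ``harmless'' or cosmetic: it comes from the far-region constraint $y\geq\sqrt{\nu t}$, which converts $\nu\xi^2 e^{-\nu(s-\tau)\xi^2}\lesssim (s-\tau)^{-1}$ into $Ct^{-1}$ via $(s-\tau)^{-1}e^{-(y+z)^2/(c\nu(s-\tau))}\leq Ct^{-1}$. In your second-order argument, where no spatial restriction is imposed, you are left with $(s-\tau)^{-1}$, and the $\tau$-integral diverges; absorbing the $\xi^2$ of the multiplier $\xi^2-|\xi|\pa_y$ entirely by analyticity would cost $(\mu_0-\mu-\gamma s)^{-2}$, which (multiplied by the prefactor $(\mu_0-\mu-\gamma t)^\alpha$ with $\alpha<1$) is not integrable in $s$. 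So both halves of the proposed split contain a real gap, and the spatial decomposition is the missing ingredient, not an optional presentation choice.
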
	

\begin{lemma}\label{lem: om-4}
	For $\mu<\mu_0-\gamma t$, we have
	\beno
	&&\left\|\int_0^t R(t-s,y,0)B(s)ds\right\|_{Y_1(t)\cap Y_2(t)} \\
	&&		\leq \frac{C}{t}\sup_{\mu<\mu_0-\gamma t}(\mu_0-\mu-\gamma t)^\al\int_0^t(\mu_0-\mu-\gamma s)^{-1}\int_0^s
		\sum_{i\leq1}\left\|e^{\eps_0(1+\mu_1)|\xi|}\xi^iB_\xi(\tau)\right\|_{L^1_\xi\cap L^2_\xi}d\tau ds\\
&&\quad+C\sup_{\mu<\mu_0-\gamma t}\int_0^t
		\sum_{i\leq1}\left\|e^{\eps_0(1+\mu)|\xi|}\xi^iB_\xi(s)\right\|_{L^1_\xi\cap L^2_\xi}ds\\
		&&\quad+C\sup_{\mu<\mu_0-\gamma t} (\mu_0-\mu-\gamma t)^\al\int_0^t(\mu_0-\mu-\gamma s)^{-1}
		\sum_{i\leq1}\left\|e^{\eps_0(1+\mu_1)|\xi|}\xi^iB_\xi(s)\right\|_{L^1_\xi\cap L^2_\xi}ds.
		\eeno
\end{lemma}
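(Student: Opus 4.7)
My plan is to mirror the proof of Lemma~\ref{lem: om-3}, with the source replaced by the boundary trace $B_\xi(s)$; correspondingly, the integral $\int_0^t R_\xi(t-s,y,0)B_\xi(s)\,ds$ can be viewed as the special case of Lemma~\ref{prop of R 1}(3) with $f\equiv 0$ and $h=B$. I split the $Y(t)$ norm into its low-order part ($i+j\leq 1$) and its second-order part ($i+j=2$), and estimate each separately.

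For the low-order part I differentiate inside the integral and apply Lemma~\ref{prop of R 1}(2) with $k=j\leq 1$:
\[
|(y\pa_y)^j R_\xi(t-s,y,0)| \leq Ca\, e^{-\theta_0 ay/2} + \frac{C}{\sqrt{\nu(t-s)}}\, e^{-\theta_0 y^2/(2\nu(t-s))}\, e^{-\nu\xi^2(t-s)/8}.
\]
Multiplying by $e^{\eps_0(1+\mu-y)|\xi|}e^{\eps_0(1+\mu)y^2/(\nu(1+t))}$ and integrating in $y\in(0,1+\mu)$, the factor $|\xi|^i$ with $i\leq 1$ is already supplied by $\xi^i B_\xi$ on the source side, so no shift of the analytic weight is needed. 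The Gaussian spatial weight is dominated either by $a\geq 1/\sqrt{\nu}$ in the first kernel piece (after completing the square) or by the heat-kernel Gaussian decay in the second piece (under $\eps_0\ll\theta_0$ and $t-s\le 1+t$). Summing in $\xi\in\ZZ$ and integrating in $s$ produces the second RHS term.

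For the second-order part I distinguish how the two derivatives are distributed. When at least one derivative is of $y\pa_y$ type, Lemma~\ref{prop of R 1}(2) with $k=2$ still gives a usable kernel bound; the at-most-one remaining $|\xi|$ factor is absorbed into the shifted weight $e^{\eps_0(1+\mu_1-y)|\xi|}$ with $\mu_1=\mu+\tfrac12(\mu_0-\mu-\gamma s)$, costing a single factor $(\mu_0-\mu-\gamma s)^{-1}$. Combined with the mandatory $(\mu_0-\mu-\gamma t)^\alpha$ prefactor built into $\|\cdot\|_{Y(t)}$, this yields the third RHS term. For the remaining genuinely second-order contribution, I invoke Lemma~\ref{prop of R 1}(3) with $f\equiv 0$, $h=B$ to write
\[
\int_0^t R_\xi(t-s,y,0)B_\xi(s)\,ds = 2\nu\int_0^t\!\int_0^s(\xi^2-|\xi|\pa_y)\bigl[e^{-\nu(s-\tau)\xi^2}g(\nu(s-\tau),y)\bigr]B_\xi(\tau)\,d\tau\,ds,
\]
apply standard one-dimensional Gaussian estimates to $g$ and its derivatives, and swap the order of integration in $\tau$ and $s$; after pairing with the single $|\xi|$-absorption loss, the resulting double time integral rearranges via time-averaging into $\tfrac{1}{t}\int_0^t(\mu_0-\mu-\gamma s)^{-1}\int_0^s\|\cdots\|_{\ell^1_\xi}\,d\tau\,ds$, which is the first RHS term.

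The principal obstacle is harmonizing three competing scales: the $\nu$-dependent Gaussian spatial weight in $\|\cdot\|_{\mu,t}$, the analytic weight in $\xi$ with its $(\mu_0-\mu-\gamma s)^{-1}$ cost per $|\xi|$ absorbed, and the heat-kernel time singularity $(\nu(s-\tau))^{-k/2}$ coming from Lemma~\ref{prop of R 1}(3). Choosing $\eps_0$ uniformly small enough that the Gaussian spatial weight never dominates the kernel decay is the quantitative crux; once this is secured, Fubini and integration in $s$ assemble the three stated bounds, and $\sup_{\mu<\mu_0-\gamma t}$ completes the estimate.
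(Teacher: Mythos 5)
Your plan departs from the paper's proof in a way that introduces a genuine gap. The paper proves this lemma (together with Lemma~\ref{lem: om-3}) by first splitting the spatial variable at $y=\sqrt{\nu t}$ using the cut-offs $\phi_{\sqrt{\nu t}}$ and $\phi^c_{\sqrt{\nu t}}$: Lemma~\ref{est of RB2} handles $y<\sqrt{\nu t}$ by the direct pointwise kernel bound from Lemma~\ref{prop of R 1}(2), and Lemma~\ref{est of RB1} handles $y>\sqrt{\nu t}$ by the representation formula from Lemma~\ref{prop of R 1}(3). You instead split by derivative order and never introduce the spatial dichotomy; this is precisely the piece of the argument that cannot be dropped.

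Concretely, your low-order step claims that over the whole strip $0<y<1+\mu$ the Gaussian spatial weight $e^{\eps_0(1+\mu)y^2/(\nu(1+t))}$ is dominated by the first kernel piece $a\,e^{-\theta_0 a y/2}$ with $a=|\xi|+1/\sqrt{\nu}$ ``after completing the square.'' This is false: completing the square in $\eps_0 y^2/\nu-\theta_0 a y/2$ leaves a nonnegative quadratic term $\tfrac{\eps_0}{\nu}\bigl(y-\tfrac{\theta_0 a\nu}{4\eps_0}\bigr)^2$, and on the macroscopic interval $y\in(0,1+\mu)$ this term is of size $\eps_0/\nu\to\infty$ as $\nu\to0$. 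Equivalently, after the substitution $u=y/\sqrt\nu$ one is asked to bound $\int_0^{(1+\mu)/\sqrt\nu} e^{\eps_0 u^2-\theta_0 u/2}\,du$ uniformly in $\nu$, which diverges. The direct kernel estimate only works where the Gaussian weight is harmless, i.e.\ on $y\lesssim\sqrt{\nu t}$, where $e^{\eps_0(1+\mu)y^2/(\nu(1+t))}\leq C$; this is exactly what $\phi_{\sqrt{\nu t}}$ enforces in Lemma~\ref{est of RB2}.

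The same cut-off is also indispensable in your use of Lemma~\ref{prop of R 1}(3). Expanding the remainder kernel through the double time integral produces factors $\nu\xi^2 e^{-\nu(s-\tau)\xi^2}$ and $\nu|\xi|\pa_y g$, each of which is of order $(s-\tau)^{-1}$ and hence not integrable in $\tau$ near $\tau=s$. The $C/t$ prefactor in the first right-hand side term of the lemma does not come from ``time-averaging'' or a Fubini rearrangement; it comes from the pointwise bound $\frac{1}{s-\tau}e^{-ct/(s-\tau)}\leq C/t$, which is available only because the residual Gaussian $e^{-(y)^2/(c\nu(s-\tau))}$ with $y\geq\sqrt{\nu t}$ supplies the factor $e^{-ct/(s-\tau)}$. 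Without restricting to $y>\sqrt{\nu t}$, neither of your two mechanisms for absorbing the $\nu\xi^2$ loss actually closes. To repair the argument, follow the paper: split the strip at $\sqrt{\nu t}$ first, then apply Lemma~\ref{prop of R 1}(2) on the inner region and Lemma~\ref{prop of R 1}(3) (with $f\equiv0$, $h=B$) on the outer region, and only afterwards distribute derivatives and invoke Lemma~\ref{analytic recovery} to shift $\mu$ to $\mu_1$ at the cost of $(\mu_0-\mu-\gamma s)^{-1}$.
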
	

The proof of Lemma \ref{lem: om-1}--Lemma \ref{lem: om-4} is postponed to the end of this section. To obtain the estimates of $\|(1,x)\big(\om-\om_c\big)\|_{Y_1(t)\cap Y_2(t)}$, it remains to give the estimates of $N$ and $B$.

\subsection{Estimates of $N$ and $B$}
Recall that $N, \widetilde N$ are defined in \eqref{def of N} and \eqref{def of tilde N} respectively.
\begin{lemma}\label{est of N in new norm}
	For $0<\mu<\mu_0-\gamma s$, it holds that
	\begin{align*}
		&\left\|\big(N(s),\widetilde N(s)\big)\right\|_{W_{\mu,s}}\\
		&\leq C(\mu_0-\mu-\gamma s)^{-\alpha}\big(e(s)+1\big)^2
		+C\big((\mu_0-\mu-\gamma s)^{-\alpha}+e^{\frac{4\eps_0}{\nu}}\big)\| (1,x)\omega(s)\|_{H^3(\frac{7}{8}\leq y\leq4)}^2.
	\end{align*}
\end{lemma}

The proof of Lemma \ref{est of N in new norm} follows from Lemma \ref{est of N1} and Lemma \ref{est of omega c}, and by taking $\eps_0$ sufficiently small.

\begin{lemma}\label{est of N1}
	For $0<\mu<\mu_0-\gamma s$, it holds that
	\begin{align*}
 \sum_{i+j\leq1}\left\|\pa_x^i(y\pa_y)^j \big(N(s),\widetilde N(s)\big) \right\|_{Y^1_{\mu,s}\cap Y^2_{\mu,s}}  
 \leq C(\mu_0-\mu-\gamma s)^{-\alpha}\Big(\big(e(s)+1\big)^2
		+\| (1,x)\omega(s)\|_{H^2(1\leq y\leq2)}^2 \Big),
	\end{align*}
	and
		\begin{align*}
 \sum_{i+j\leq2}\left\|\left\|\pa_x^i\pa_y^j   \big(N(s),\widetilde N(s)\big) \right\|_{L^2_x}\right\|_{L^1_y(y\geq1)}  &\leq C\Big(\big(e(s)+1\big)^2
		+\| (1,x)\omega(s)\|_{H^2(1\leq y\leq2)}^2 \Big)
		+C\nu\|\omega_c\|_{H^4(1\leq y\leq3)}.
	\end{align*}
	For the first inequality and the case $i=j=0$, the factor $(\mu_0-\mu-\gamma s)^{-\alpha}$can be removed.
\end{lemma}

\begin{proof}
We only prove for $N$, since $\widetilde N$ can be proved in a same way.
First of all, we deal with the first inequality. By the definition of $Y^k_{\mu, s}$, we only deal with  the strip $0\leq y\leq 1+\mu$.

 Due to the definition of $\chi$, we note that $N=-U\cdot\nabla\omega+\nu\pa_x^2\omega_c$ for $0<y<1+\mu$.\smallskip

	\underline{Case 1: $i=j=0.$}
	Lemma \ref{product estimate} gives
	\begin{align*}
		&\|N(s)\|_{Y^k_{\mu, s}}
		\leq 
		\|u\pa_x\omega\|_{Y^k_{\mu,s}}
		+\|v\pa_y\omega\|_{Y^k_{\mu,s}}
		+\nu\|\pa_x^2\omega_c\|_{Y^k_{\mu,s}} \\
		\nonumber
		&\leq \left\|\sup_{0<y<1+\mu}e^{\eps_0(1+\mu-y)_+|\xi|}|u_\xi(s,y)|\right\|_{L^1_\xi}
		\Big(\|\pa_x(\omega-\omega_c)(s)\|_{Y^k_{\mu,s}}
		+\|\pa_x\omega_c(s)\|_{Y^k_{\mu,s}}\Big)+\nu\|\pa_x^2\omega_c\|_{Y^k_{\mu,s}}\\
		\nonumber
		 &\quad+\left\|\sup_{0<y<1+\mu}e^{\eps_0(1+\mu-y)_+|\xi|}\frac{|v_\xi(s,y)|}{y}\right\|_{L^1_\xi}
		\Big(\|y\pa_y(\omega-\omega_c)(s)\|_{Y^k_{\mu,s}}
		+\|y\pa_y\omega_c(s)\|_{Y^k_{\mu,s}}\Big) \\
		\nonumber
		&\leq C\big(e(s)+\|\omega(s)\|_{H^1(1\leq y\leq2)}+1\big)\big(e(s)+1\big),
	\end{align*}
	here we used Lemma \ref{est of omega c} and Proposition \ref{velocity estimates 1} in the last step.\smallskip

	\underline{Case 2: $i+j=1$.}
	Similarly,  we utilize Lemma \ref{product estimate} to obtain
	\begin{align*}
		&\|\pa_x N(s)\|_{Y^k_{\mu,s}}
		\leq \|\pa_x u(s)\pa_x\omega(s)\|_{Y^k_{\mu,s}}
		+\|\pa_x u(s)\pa_x^2\omega(s)\|_{Y^k_{\mu,s}}
		+\left\|\pa_x v(s)\pa_y\omega(s)\right\|_{Y^k_{\mu,s}}\\
		&\qquad\qquad\qquad\qquad+\left\|v(s)\pa_x\pa_y\omega(s)\right\|_{Y^k_{\mu,s}}
		+\nu\|\pa_x^3\omega_c\|_{Y^k_{\mu,s}}\\
		&\leq \left\|\sup_{0<y<1+\mu}e^{\eps_0(1+\mu-y)_+|\xi|}|(\pa_x u)_\xi(s,y)|\right\|_{L^1_\xi}
		\Big(\|\pa_x(\omega-\omega_c)(s)\|_{Y^k_{\mu,s}}
		+\|\pa_x\omega_c(s)\|_{Y^k_{\mu,s}}\Big)\\
		&\quad+\left\|\sup_{0<y<1+\mu}e^{\eps_0(1+\mu-y)_+|\xi|}| u_\xi(s,y)|\right\|_{L^1_\xi}
		\Big(\|\pa_x^2(\omega-\omega_c)(s)\|_{Y^k_{\mu,s}}
		+\|\pa_x^2\omega_c(s)\|_{Y^k_{\mu,s}}\Big)\\
		&\quad+\left\|\sup_{0<y<1+\mu}e^{\eps_0(1+\mu-y)_+|\xi|}\frac{|(\pa_x v)_\xi(s,y)|}{y}\right\|_{L^1_\xi}
		\Big(\|y\pa_y(\omega-\omega_c)(s)\|_{Y^k_{\mu,s}}
		+\|y\pa_y\omega_c(s)\|_{Y^k_{\mu,s}}\Big)\\
		&\quad+\left\|\sup_{0<y<1+\mu}e^{\eps_0(1+\mu-y)_+|\xi|}\frac{|v_\xi(s,y)|}{y}\right\|_{L^1_\xi}
		\Big(\|\pa_x (y\pa_y)(\omega-\omega_c)(s)\|_{Y^k_{\mu,s}}
		+\|\pa_x (y\pa_y)\omega_c(s)\|_{Y^k_{\mu,s}} \Big)
		+\nu\|\pa_x^3\omega_c\|_{Y^k_{\mu,s}}\\
		&\leq C(\mu_0-\mu-\gamma s)^{-\alpha}\big(e(s)+1\big)^2
		+C(\mu_0-\mu-\gamma s)^{-\alpha}\big(e(s)+1\big)\|\omega\|_{H^2(1\leq y\leq2)}+\nu\|\pa_x^3\omega_c\|_{Y^k_{\mu,s}}.
	\end{align*}
	In a similar way, we deduce that $\|y\pa_y N(s)\|_{Y^k_{\mu,s}}$ possesses a same bound.
	Combining all above estimates, we get the first desired result. 
	
For the second result, a direct computation yields
	\begin{align*}
		&\sum_{i+j\leq2}\left\|\left\|\pa_x^i\pa_y^j N(s)\right\|_{L^2_x}\right\|_{L^1_y(y\geq1)}\\
		&\leq  C\sum_{k=0}^2 \sum_{i+j\leq k}\|\pa_x^i\pa_y^j U(s)\|_{L^\infty(1\leq y\leq3)} \sum_{i+j\leq 3-k}\left\|\left\|\pa_x^i\pa_y^j\omega\right\|_{L^2_x}\right\|_{L^1_y(1\leq y\leq3)}\\
		&\quad+C\nu\sum_{i+j\leq3}\left\|\left\|\pa_x^i\pa_y^j\omega\right\|_{L^2_x}\right\|_{L^1_y(1\leq y\leq3)}
		+C\nu\sum_{i+j\leq4}\|\pa_x^i\pa_y^j\omega_c\|_{L^2(1\leq y\leq3)}
		:=I_1+I_2+I_3.
	\end{align*}
Obviously, it holds that
	\begin{align*}
		I_2+I_3\leq C\|\omega(s)\|_{H^3(\frac{7}{8}\leq y\leq4)}
		+C\nu\|\omega_c\|_{H^4(1\leq y\leq3)} .
	\end{align*}
 For $0\leq k\leq2$, we get by Lemma \ref{velocity estimates 1}  that 
	 \begin{align*}
		I_1&\leq C\sum_{k=0}^2 \big(e(s)+\|\omega(s)\|_{H^{1+k}(\frac{7}{8}\leq y\leq4)}+1\big)\|\omega(s)\|_{H^{3-k}(\frac{7}{8}\leq y\leq4)}\\
		&\leq C\big(e(s)+1\big)\|\omega(s)\|_{H^3(\frac{7}{8}\leq y\leq4)}
		+C\|\omega(s)\|_{H^3(\frac{7}{8}\leq y\leq4)}^2.
	\end{align*}
	\end{proof}

We next derive the estimates of the boundary term $B, \widetilde B$ defined in \eqref{eq: eq of omega near the boundary}, \eqref{def of tilde B}.

 \begin{lemma}\label{est of B 1}
	For $0<\mu<\mu_0-\gamma s$, it holds that
	\begin{align*}
		&\sum_{i\leq1}\left\|e^{\eps_0(1+\mu)|\xi|}\xi^i \big( B_\xi(s),\widetilde B_\xi(s)\big)\right\|_{L^1_\xi\cap L^2_\xi}\\
		&\leq C(\mu_0-\mu-\gamma s)^{-\alpha}\Big(\big(E(s)+1\big)^2+e^{\frac{4\eps_0}{\nu}}\sup_{0\leq\tau\leq s}\|(1,x)\omega(\tau)\|^2_{H^3(\frac{7}{8}\leq y\leq4)} \Big)+C\nu^{1/2}s^{-1/2}.
	\end{align*}
	For the  case $i=0$, the factor $(\mu_0-\mu-\gamma s)^{-\alpha}$can be removed.
\end{lemma}

\begin{proof}
We treat $B$ firstly.
 According to the definition of $B$, we utilize Lemma \ref{velocity formula} to get
	\begin{align}\label{decomposition of B xi}
		B_\xi(s)&=\big(\pa_y\Delta_D^{-1}(U\cdot\nabla\omega)\big)_\xi|_{y=0}(s)-\nu(\pa_y+|\xi|)(\omega_c)_\xi|_{y=0}(s)\\
		\nonumber
		&=-\int_0^{1+\mu}e^{-|\xi|z}(U\cdot\nabla\omega)_\xi(s,z)dz
		-\int_{1+\mu}^{+\infty}e^{-|\xi|z}(U\cdot\nabla\omega)_\xi(s,z)dz
		-\nu(\pa_y+|\xi|)(\omega_c)_\xi|_{y=0}(s)\\
		\nonumber
		&=I_1+I_2+I_3.
	\end{align}
	
	We deal with $I_1$ firstly. The following fact
	\begin{align}\label{transfer 1}
		e^{\eps_0(1+\mu)|\xi|}e^{-|\xi|z}
		\leq e^{\eps_0(1+\mu-z)_+|\xi|}
	\end{align}
	gives
	\begin{align*}
		\left|e^{\eps_0(1+\mu)|\xi|}I_1\right|
		\leq \Big(\left\|e^{\eps_0(1+\mu-z)_+|\xi|} (u\pa_x\omega)_\xi(s,z)\right\|_{\mu,s}
		+\left\|e^{\eps_0(1+\mu-z)_+|\xi|} (v\pa_y\omega)_\xi(s,z)\right\|_{\mu,s}\Big).
	\end{align*}
	Thus, we use Lemma \ref{product estimate} and Lemma \ref{velocity estimates 1}, Lemma \ref{est of omega c} to obtain
	\begin{align*}
		\left\|e^{\eps_0(1+\mu)|\xi|}\xi^i I_1\right\|_{L^1_\xi\cap L^2_\xi}
		&\leq \sum_{j+k\leq i}\left\|\sup_{0<y<1+\mu}e^{\eps_0(1+\mu-y)_+|\xi|}|(\pa_x^j u)_\xi(s,y)|\right\|_{L^1_\xi}
		\|\pa_x^{1+k}\omega(s)\|_{Y^1_{\mu,s}\cap Y^2_{\mu,s}}\\
		&+\sum_{j+k\leq i}\left\|\sup_{0<y<1+\mu}e^{\eps_0(1+\mu-y)_+|\xi|}\frac{|(\pa_x^j v)_\xi(s,y)|}{y}\right\|_{L^1_\xi}
		\|\pa_x^k(y\pa_y)\omega(s)\|_{Y^1_{\mu,s}\cap Y^2_{\mu,s}}\\
		&\leq C(\mu_0-\mu-\gamma s)^{-\alpha}\big(e(s)+\|\omega(s)\|_{H^2(1\leq y\leq2)}+1\big)\big(e(s)+1\big).
	\end{align*}
	
For $I_2$, we get by integration by parts that
	\begin{align*}
		I_2&=-\int_{1+\mu}^{+\infty}e^{-|\xi|z}\big(\dv(U\omega)\big)_\xi(s,z)dz\\
		&=-\int_{1+\mu}^{+\infty}e^{-|\xi|z}\big((i\xi)(u\omega)_\xi(s,z)+|\xi|(v\omega)_\xi(s,z)\big)dz
		+e^{-(1+\mu)|\xi|}(v\omega)_\xi(s,1+\mu)\\
		&=-\int_{1+\mu}^{+\infty}e^{-|\xi|z}\big((i\xi)(u\omega)_\xi(s,z)+|\xi|(v\omega)_\xi(s,z)\big)dz
		+e^{-(1+\mu)|\xi|}\int_0^{1+\mu}\pa_z(v\omega)_\xi(s,z)dz\\
		&=-\int_{1+\mu}^{+\infty}e^{-|\xi|z}\big((i\xi)(u\omega)_\xi(s,z)+|\xi|(v\omega)_\xi(s,z)\big)dz\\
		&\quad-e^{-(1+\mu)|\xi|}\int_0^{1+\mu}(\pa_x u\omega)_\xi(s,z)dz
		+e^{-(1+\mu)|\xi|}\int_0^{1+\mu}(v\pa_z\omega)_\xi(s,z)dz.
	\end{align*}
	We then have for $i\leq1$,
	\begin{align*}
		\left|e^{\eps_0(1+\mu)|\xi|}\xi^i I_2\right|
		&\leq C\int_{1+\mu}^{+\infty}e^{-\frac{|\xi|z}{2}}|(U\omega)_\xi(s,z)|dz
		+C\left\|e^{\eps_0(1+\mu-z)_+|\xi|}(\pa_x u\omega)_\xi(s,z)\right\|_{\mu,s}\\
		&\quad+C\left\|e^{\eps_0(1+\mu-z)_+|\xi|}( v \pa_z\omega)_\xi(s,z)\right\|_{\mu,s}.
	\end{align*}
	
	Due to $\|e^{-\frac{|\xi|z}{2}}\|_{L^2_\xi\cap L^\infty_\xi}\leq C$ for $z\geq1+\mu$, we have
	\begin{align*}
		\left\|\int_{1+\mu}^{+\infty}e^{-\frac{|\xi|z}{2}}|(U\omega)_\xi(s,z)|dz\right\|_{L^1_\xi\cap L^2_\xi}
		&\leq C\int_{1+\mu}^{+\infty}\|(U\omega)_\xi(s,z)\|_{L^2_\xi}dz
		=C \int_{1+\mu}^{+\infty}\|(U\omega)(s,z)\|_{L^2_x}dz \\
		&\leq C\| U e^\Psi\chi_0\psi \omega(s)\|_{L^2}
		\leq C\|U(s)\|_{L^\infty}\|e^\Psi\chi_0\psi \omega(s)\|_{L^2}
		\leq C\big(e(s)+1\big)^2,
	\end{align*}
	here we used Plancherel identity and Lemma \ref{velocity estimates 1}. Thus, we obtain
	\begin{align*}
		\left\|e^{\eps_0(1+\mu)|\xi|}\xi^i I_2\right\|_{L^1_\xi\cap L^2_\xi}
		&\leq C\left\|\int_{1+\mu}^{+\infty}e^{-\frac{|\xi|z}{2}}|(U\omega)_\xi(s,z)|dz\right\|_{L^1_\xi\cap L^2_\xi}
		+C\| \pa_x u \omega \|_{Y^1_{\mu,s}\cap Y^2_{\mu,s}}
		+C\|v \pa_y\omega\|_{Y^1_{\mu,s}\cap Y^2_{\mu,s}}\\
		&\leq C\big(e(s)+1\big)^2+C\big(e(s)+1\big)\|\omega(s)\|_{H^2(1\leq y\leq2)}.
		\end{align*}
where we used Lemma \ref{product estimate} and Lemma \ref{velocity estimates 1} to obtain
	\begin{align*}
\|\pa_x u \omega\|_{Y^1_{\mu,s}\cap Y^2_{\mu,s}}
		+\| v \pa_y\omega\|_{Y^1_{\mu,s}\cap Y^2_{\mu,s}}
		\leq& \left\|\sup_{0<y<1+\mu}e^{\eps_0(1+\mu-y)_+|\xi|}|(\pa_x u)_\xi(s,y)|\right\|_{L^1_\xi}
		\|\omega(s)\|_{Y^1_{\mu,s}\cap Y^2_{\mu,s}}\\
		&+\left\|\sup_{0<y<1+\mu}e^{\eps_0(1+\mu-y)_+|\xi|}\frac{|v_\xi(s,y)|}{y}\right\|_{L^1_\xi}
		\|(y\pa_y)\omega(s)\|_{Y^1_{\mu,s}\cap Y^2_{\mu,s}}\\
		\leq& C\big(e(s)+\|\omega(s)\|_{H^2(1\leq y\leq2)}+1\big)\big(e(s)+1\big).
	\end{align*}
	
	For $I_3$, Lemma \ref{est of omega c} implies
	\begin{align*}
		\left\|e^{\eps_0(1+\mu)|\xi|}\xi^i I_3\right\|_{L^1_\xi\cap L^2_\xi}
		\leq C\nu^{1/2}s^{-1/2}.
	\end{align*}
	
 Combining the estimates of $I_1, I_2$ and $I_3$, we obtain
 \begin{align}\label{est of B med}
 	\sum_{i\leq1}\left\|e^{\eps_0(1+\mu)|\xi|}\xi^i  B_\xi(s)\right\|_{L^1_\xi\cap L^2_\xi}
		\leq C(\mu_0-\mu-\gamma s)^{-\alpha}\Big(\big(E(s)+1\big)^2+\|\omega(s)\|^2_{H^3(\frac{7}{8}\leq y\leq 4)} \Big)+C\nu^{1/2}s^{-1/2}.
 \end{align}
 \medskip
 
 Now we turn to treat $\widetilde B$. Recall that $\widetilde B_\xi=i\pa_\xi(B_\xi)-i\nu sgn\xi (\omega-\omega_c)_\xi|_{y=0}$. For the first term $i\pa_\xi(B_\xi)$, taking $\pa_\xi$ on $I_1\sim I_3$ before and using the relation $i\pa_\xi f_\xi=(xf)_\xi$, we derive that $i\pa_\xi(B_\xi)$ the same bound with $B_\xi$ in \eqref{est of B med}. Therefore, we focus on the second term in $\widetilde B_\xi$, that is $i\nu sgn\xi (\omega-\omega_c)_\xi|_{y=0}$. By the solution formula \eqref{eq: solution formula of omega near the boundary}, we have
 \begin{align*}
 	(\chi\omega-\chi\omega_c)_\xi(s,0)
	&=\int_0^{+\infty}\big(H_\xi(s,0,z)+R_\xi(s,0,z)\big)b_\xi(z)dz \\
	\nonumber
	&+\int_0^s\int_0^{+\infty}\big(H_\xi(s-\tau,0,z)+R_\xi(s-\tau,0,z) \big)N_\xi(\tau,z)dzd\tau
	\\
	\nonumber
	&-\int_0^s \big(H_\xi(s-\tau,0,0)+R_\xi(s-\tau,0,0)\big) B_\xi(\tau)d\tau:=J_1+J_2+J_3.
 \end{align*}
 
 For $J_1$, Lemma \ref{lem: est of initial b} yields
 \begin{align*}
 	\nu\sum_{i\leq1}\left\|e^{\eps_0(1+\mu)|\xi|}\xi^i J_1\right\|_{L^1_\xi\cap L^2_\xi}
		\leq C\nu.
 \end{align*}
 
 For $J_2$, by \eqref{def of H xi}, \eqref{def of Gamma xi}, Lemma \ref{prop of R 1} and a direct computation, we have
	\begin{align}\label{est of H med}
		e^{\eps_0(1+\mu)|\xi|}|\xi|^i
		|H_\xi(s-\tau,0,z)|
		\leq 
		\left\{
		\begin{aligned}
			&\frac{C|\xi|^i}{\nu^{1/2}(s-\tau)^{1/2}}e^{\eps_0(1+\mu-z)_+|\xi|},\qquad z<1+\mu,\\
			&C,\qquad z\geq1+\mu,
		\end{aligned}
		\right.
	\end{align}
	and
	\begin{align}\label{est of R med}
		e^{\eps_0(1+\mu)|\xi|}|\xi|^i
		|R_\xi(s-\tau,0,z)|
		\leq 
		\left\{
		\begin{aligned}
			&C\big(\frac{|\xi|^i}{\nu^{1/2}(s-\tau)^{1/2}}+|\xi|^{1+i}\big)e^{\eps_0(1+\mu-z)_+|\xi|},\qquad z<1+\mu,\\
			&C,\qquad z\geq1+\mu.
		\end{aligned}
		\right.
	\end{align}
	Thus, it holds that
	\begin{align*}
		&\nu\sum_{i\leq1}\left\|e^{\eps_0(1+\mu)|\xi|}\xi^i
		J_2\right\|_{L^1_\xi \cap L^2_\xi}
		\leq C\nu^{1/2}\int_0^s(s-\tau)^{-1/2}\sum_{i\leq1}\|\pa_x^i N\|_{Y_{\mu,\tau}^1\cap Y_{\mu,\tau}^2}d\tau\\
		&\qquad+C\nu\int_0^s \sum_{i\leq2}\|\pa_x^i N\|_{Y_{\mu,\tau}^1\cap Y_{\mu,\tau}^2}d\tau
		+C\nu \int_0^s \sum_{i\leq1}\left\|\left\|(\pa_x^i N)_\xi(\tau,z)\right\|_{L^1_z(z\geq1+\mu)}\right\|_{L^1_\xi\cap L^2_\xi}d\tau\\
		&\leq C\nu^{1/2}\int_0^s\big((s-\tau)^{-1/2}+(\mu_0-\mu-\gamma\tau)^{-1}\big)\sum_{i\leq1}\|\pa_x^i N\|_{Y_{\mu_2,\tau}^1\cap Y_{\mu_2,\tau}^2}d\tau\\
		&\qquad+C\nu \int_0^s \sum_{i\leq2}\left\|\left\|\pa_x^i N(\tau,z)\right\|_{L^2_x}\right\|_{L^1(z\geq1+\mu)}d\tau\\
		&\leq C\nu^{1/2}\int_0^s\big((s-\tau)^{-1/2}+(\mu_0-\mu-\gamma\tau)^{-1}\big) \|N(\tau)\|_{W_{\mu_2,\tau}}d\tau,
	\end{align*}
	where we take $\mu_2=\mu+\f12(\mu_0-\mu-\gamma\tau)$ and use Lemma \ref{analytic recovery} in the last but one step.
	
	By Lemma \ref{est of N in new norm} and Lemma \ref{integral computation}, we have
	\begin{align*}
		&\nu\sum_{i\leq1}\left\|e^{\eps_0(1+\mu)|\xi|}\xi^i
		J_2\right\|_{L^1_\xi \cap L^2_\xi}
		\leq C\nu^{1/2}\int_0^s\big((s-\tau)^{-1/2}+(\mu_0-\mu-\gamma\tau)^{-1}\big)\cdot\\
		&\quad \Big((\mu_0-\mu-\gamma\tau)^{-\alpha}(e(\tau)+1)^2+\big((\mu_0-\mu-\gamma\tau)^{-\alpha}+e^{\frac{4\eps_0}{\nu}}\big)\|(1,x)\omega(\tau)\|^2_{H^3(\frac{7}{8}\leq y\leq3)}\Big)d\tau\\
		&\leq C(\mu_0-\mu-\gamma s)^{-\alpha}\Big(\big(E(s)+1\big)^2
		+e^{\frac{4\eps_0}{\nu}}\sup_{0\leq\tau\leq s}\|(1,x)\omega(\tau)\|^2_{H^3(\frac{7}{8}\leq y\leq4)} \Big).
	\end{align*}
	
	For $J_3$, as in $J_2$, we use \eqref{est of H med},\eqref{est of R med}, Lemma \ref{analytic recovery} and take $\mu_2=\mu+\f12(\mu_0-\mu-\gamma\tau)$ to obtain
	\begin{align*}
		&\nu\sum_{i\leq1}\left\|e^{\eps_0(1+\mu)|\xi|}\xi^i
		J_3\right\|_{L^1_\xi \cap L^2_\xi}
		\leq C\nu^{1/2}\int_0^s(s-\tau)^{-1/2}\sum_{i\leq1}\left\|e^{\eps_0(1+\mu)|\xi|}\xi^i B_\xi(\tau)\right\|_{L^1_\xi \cap L^2_\xi}d\tau\\
		&\qquad+C\nu\int_0^s \sum_{i\leq2}\left\|e^{\eps_0(1+\mu)|\xi|}\xi^i B_\xi(\tau)\right\|_{L^1_\xi \cap L^2_\xi}d\tau\\
		&\leq C\nu^{1/2}\int_0^s \big((s-\tau)^{-1/2}+(\mu_0-\mu-\gamma\tau)^{-1}\big)
		\sum_{i\leq1}\left\|e^{\eps_0(1+\mu_2)|\xi|}\xi^i B_\xi(\tau)\right\|_{L^1_\xi \cap L^2_\xi}d\tau.
	\end{align*}
	By \eqref{est of B med} and Lemma \ref{integral computation}, we have
	\begin{align*}
		&\nu\sum_{i\leq1}\left\|e^{\eps_0(1+\mu)|\xi|}\xi^i
		J_3\right\|_{L^1_\xi \cap L^2_\xi}
		\leq C\nu^{1/2}\int_0^s \big((s-\tau)^{-1/2}+(\mu_0-\mu-\gamma\tau)^{-1}\big)\cdot\\
		&\qquad\qquad\qquad\qquad\Big\{ (\mu_0-\mu-\gamma \tau)^{-\alpha}\Big(\big(E(\tau)+1\big)^2+\|\omega(\tau)\|^2_{H^3(\frac{7}{8}\leq y\leq 4)} \Big)+\nu^{1/2}\tau^{-1/2}\Big\} d\tau\\
		&\leq C(\mu_0-\mu-\gamma s)^{-\alpha}\Big(\big(E(s)+1\big)^2
		+\sup_{0\leq\tau\leq s}\|(1,x)\omega(\tau)\|^2_{H^3(\frac{7}{8}\leq y\leq4)} \Big)+C\nu^{1/2}s^{-1/2}.
	\end{align*}
	
	Collecting these estimates together implies the desired result.
\end{proof}

\subsection{Proof of Proposition \ref{prop: Y(t)}}

 \underline{Proof of \eqref{est: Y(t)}}. 
Recalling the definition of functional space $Y_k(t)$, we have
\beno
\|\om\|_{Y_k(t)} =\sup_{\mu<\mu_0-\gamma t} \Big(\sum_{i+j\leq 1}\left\|\pa_x^i(y\pa_y)^j\omega(t)\right\|_{Y^k_{\mu,t}}+(\mu_0-\mu-\gamma t)^\alpha \sum_{ i+j= 2 }\left\|\pa_x^i(y\pa_y)^j\omega(t)\right\|_{Y^k_{\mu,t}} \Big).
\eeno
We treat \eqref{est: Y(t)} firstly. Bringing Lemma \ref{est of N1} into Lemma \ref{lem: om-1} and Lemma \ref{lem: om-3}, we get
\begin{align*}
	&\left\|\int_0^t\int_0^{+\infty}\big(H_\xi(t-s,y,z)+R_\xi(t-s,y,z)\big)N_\xi(s,z)dzds \right\|_{Y_1(t)\cap Y_2(t)} 
	\leq C\sup_{\mu<\mu_0-\gamma t}\int_0^t\|N(s)\|_{W_{\mu,s}}ds\\
	&\quad+C\sup_{\mu<\mu_0-\gamma t}(\mu_0-\mu-\gamma t)^{\alpha}
	\int_0^t \big((\mu_0-\mu-\gamma s)^{-1}+(\mu_0-\mu-\gamma s)^{-1/2}(t-s)^{-1/2}\big)\\
	&\qquad\qquad\cdot\big(\|N(s)\|_{W_{\mu,s}}+\frac{1}{t}\int_0^s\|N(\tau)\|_{W_{\mu_1,\tau}}d\tau\big)ds\\
	&\leq C\sup_{\mu<\mu_0-\gamma t}\int_0^t(\mu_0-\mu-\gamma s)^{-\alpha}ds \cdot \Big(\big(E(t)+1\big)^2+e^{\frac{4\eps_0}{\nu}}\sup_{[0,t]}\|(1,x)\omega(s)\|^2_{H^3(\frac{7}{8}\leq y\leq4)} \Big)\\
	&\quad+C\sup_{\mu<\mu_0-\gamma t}(\mu_0-\mu-\gamma t)^{\alpha}
	\int_0^t \big((\mu_0-\mu-\gamma s)^{-1}+(\mu_0-\mu-\gamma s)^{-1/2}(t-s)^{-1/2}\big)\\
	&\qquad\cdot \Big((\mu_0-\mu-\gamma s)^{-\alpha}\big(e(s)+1\big)^2
		+\big((\mu_0-\mu-\gamma s)^{-\alpha}+e^{\frac{4\eps_0}{\nu}}\big)\sup_{[0,t]}\| (1,x)\omega(s)\|_{H^3(\frac{7}{8}\leq y\leq4)}^2 \Big)ds\\
		&\leq \f{C}{\gamma^{\f12}} \big(E(t)+1\big)^2
		+Ce^{\frac{4\eps_0}{\nu}}\sup_{[0,t]}\| (1,x)\omega(s)\|_{H^3(\frac{7}{8}\leq y\leq4)}^2, 
\end{align*}
where we used Lemma \ref{integral computation} in the last step.

Bringing Lemma \ref{est of B 1} into Lemma \ref{lem: om-2} and Lemma \ref{lem: om-4}, we get
\begin{align*}
	&\left\|\int_0^t \big(H_\xi(t-s,y,0)+R_\xi(t-s,y,0)\big)B_\xi(s)ds\right\|_{Y_1(t)\cap Y_2(t)}\\
	&\leq C\sup_{\mu<\mu_0-\gamma t}\int_0^t \sum_{i\leq1}\left\|e^{\eps_0(1+\mu)|\xi|}\xi^i B_\xi(s)\right\|_{L^1_\xi\cap L^2_\xi}ds+C\sup_{\mu<\mu_0-\gamma t}(\mu_0-\mu-\gamma t)^{\alpha}\int_0^t (\mu_0-\mu-\gamma s)^{-1}\\
	&\qquad\qquad\cdot\Big(\sum_{i\leq1}\left\|e^{\eps_0(1+\mu)|\xi|}\xi^i B_\xi(s)\right\|_{L^1_\xi\cap L^2_\xi}+\frac{1}{t}\int_0^s \sum_{i\leq1}\left\|e^{\eps_0(1+\mu_1)|\xi|}\xi^i B_\xi(\tau)\right\|_{L^1_\xi\cap L^2_\xi}d\tau \Big)ds\\
	&\leq C\sup_{\mu<\mu_0-\gamma t}\int_0^t (\mu_0-\mu-\gamma s)^{-\alpha} \Big(\big(E(s)+1\big)^2+e^{\frac{4\eps_0}{\nu}}\sup_{[0,t]}\|(1,x)\omega(s)\|^2_{H^3(\frac{7}{8}\leq y\leq4)} \Big)ds
	+C\nu^{1/2}t^{1/2}\\
	&+C\sup_{\mu<\mu_0-\gamma t}(\mu_0-\mu-\gamma t)^{\alpha}\int_0^t (\mu_0-\mu-\gamma s)^{-1-\alpha}\Big(\big(E(s)+1\big)^2+e^{\frac{4\eps_0}{\nu}}\sup_{[0,t]}\|(1,x)\omega(s)\|^2_{H^3(\frac{7}{8}\leq y\leq4)} \Big)ds\\
	&+C\nu^{1/2}\sup_{\mu<\mu_0-\gamma t}(\mu_0-\mu-\gamma t)^{\alpha}\int_0^t (\mu_0-\mu-\gamma s)^{-1}s^{-1/2}ds\\
	&\leq \f{C}{\gamma} \big(E(t)+1\big)^2
		+Ce^{\frac{4\eps_0}{\nu}}\sup_{[0,t]}\| (1,x)\omega(s)\|_{H^3(\frac{7}{8}\leq y\leq4)}^2
		+C\nu^{1/2},
\end{align*}
where we used Lemma \ref{integral computation} in the last step.
	
Combining the above estimates with Lemma \ref{lem: est of initial b}, we derive the estimates for $\|(\omega-\omega_c)\|_{Y_1(t)\cap Y_2(t)}$. Using the same argument, we can obtain $\|x(\omega-\omega_c)\|_{Y_1(t)\cap Y_2(t)}$, which admits the same bound as $\|(\omega-\omega_c)\|_{Y_1(t)\cap Y_2(t)}$. With this, we complete the proof of \eqref{est: Y(t)}.

\smallskip

 \underline{Proof of \eqref{est: omega near boundary L infty} }.  For $0<y<\frac{3}{4}$, we utilize \eqref{eq: solution formula of omega near the boundary}, the definition of $H_\xi$ and Lemma \ref{prop of R 1} to obtain
			\begin{align*}
				|\omega_\xi(t,y)|&\leq \int_0^t\int_0^{+\infty}|H_\xi(t-s,y,z)+R_\xi(t-s,y,z)||N_\xi(s,z)|dzds\\
				&\quad+\int_0^t |H_\xi(t-s,y,0)+R_\xi(t-s,y,0)||B_\xi(s)|ds
				+|(\omega_c)_\xi(t,y)| \\
				&\quad+\left|\int_0^{+\infty}\big(H_\xi(t,y,z)+R_\xi(t,y,z)\big)b_\xi(z)dz\right| \\
				&\leq  C\int_0^t\int_0^1\big(\frac{1}{\sqrt{\nu(t-s)}}+\frac{1}{\sqrt{\nu}}+|\xi|\big)|N_\xi(s,z)|dzds
				+C\int_0^t\int_1^3|N_\xi(s,z)|dzds\\
				&\quad+\int_0^t \big(\frac{1}{\sqrt{\nu(t-s)}}+\frac{1}{\sqrt{\nu}}+|\xi|\big)|B_\xi(s)|ds
				+|(\omega_c)_\xi(t,y)|\\
				&\quad+\left|\int_0^{+\infty}\big(H_\xi(t,y,z)+R_\xi(t,y,z)\big)b_\xi(z)dz\right| 
				:=I_1+I_2+I_3+I_4+I_5,
			\end{align*}
			here we used the fact $\operatorname{supp}N\subseteq\{0\leq y\leq 3\}$.
			
			For $I_1$, we get by Lemma \ref{est of N1} that
			\begin{align*}
				\|I_1\|_{L^2_\xi}&\leq C\int_0^t\frac{1}{\sqrt{\nu(t-s)}}\left\|\int_0^1|N_\xi(s,z)|dz\right\|_{L^2_\xi}ds
				+C\int_0^t \left\|\int_0^1|(\pa_x N)_\xi(s,z)|dz\right\|_{L^2_\xi}ds\\
				&\leq C\int_0^t \frac{1}{\sqrt{\nu(t-s)}}\big(e(s)+\|\omega(s)\|_{H^1(1\leq y\leq2)}+1\big)\big( e(s)+1\big) ds\\
				&\qquad+C\int_0^t(\mu_0-\gamma s)^{-\alpha}\Big(\big(e(s)+1\big)^2
		+\| (1,x)\omega(s)\|_{H^2(1\leq y\leq2)}^2 \Big) ds\\
				&\leq C\nu^{-\f12}\Big(\big(E(t)+1\big)^2
		+\sup_{[0,t]}\| (1,x)\omega(s)\|_{H^2(1\leq y\leq2)}^2 \Big) .
			\end{align*}
			
			For $I_2$, by Lemma \ref{est of N1}, we  have
			\begin{align*}
				\|I_2\|_{L^2_\xi}\leq C\Big(\big(E(t)+1\big)^2
		+\sup_{[0,t]}\| (1,x)\omega(s)\|_{H^3(\frac{7}{8}\leq y\leq4)}^2 \Big)  .
			\end{align*}
			
			For $I_3$, we use Lemma \ref{est of B 1} to get
			\begin{align*}
				&\|I_3\|_{L^2_\xi}\leq C\int_0^t\frac{1}{\sqrt{\nu(t-s)}}\left\|e^{\eps_0|\xi|}B_\xi(s)\right\|_{L^2_\xi}ds\\
				&\leq C\int_0^t\frac{1}{\sqrt{\nu(t-s)}}\Big(\big(E(s)+1\big)^2+e^{\frac{4\eps_0}{\nu}}\sup_{0\leq\tau\leq s}\|(1,x)\omega(\tau)\|^2_{H^3(\frac{7}{8}\leq y\leq4)}+\nu^{1/2}s^{-1/2} \Big)  ds\\
				&\leq C\nu^{-\f12}\Big(\big(E(t)+1\big)^2+e^{\frac{4\eps_0}{\nu}}\sup_{[0,t]}\|(1,x)\omega(s)\|^2_{H^3(\frac{7}{8}\leq y\leq4)} \Big) .
			\end{align*}
			
			For $I_4$, we utilize Lemma \ref{est of omega c} to deduce
			\begin{align*}
				\|I_4\|_{L^2_\xi}
				\leq C(\nu t)^{-1/2}
			\end{align*}
			
			For $I_5$, Lemma \ref{lem: est of initial b} implies $\|I_5\|_{L^2_\xi}\leq C.$

			Collecting these estimates together, we derive \eqref{est: omega near boundary L infty}.

\subsection{Proof of some basic estimates}

\begin{proof}[Proof of Lemma \ref{lem: om-1}]
We decompose $H_\xi=e^{-\nu\xi^2t}g(\nu t,y-z)+e^{-\nu\xi^2t}g(\nu t,y+z):=H^-_\xi+H^+_\xi.$ We only prove the inequality for $H^-_\xi$ and the case $H^+_\xi$ is similar.

Firstly, we deal with the first part: $\sum_{i+j\leq1}\left\|\pa_x^i(y\pa_y)^j\int_0^t\int_0^{+\infty}H(t-s,y,z)N(s,z)dzds\right\|_{Y^1_{\mu,t}\cap Y^2_{\mu,t}}$.

 Let $\phi: \mathbb R_+\rightarrow[0,1]$ be a smooth cut-off function such that $\phi(y)=1$ as $0\leq y\leq \frac{1}{2}$ and $\phi(y)=0$ as $y\geq\frac{3}{4}$. 
 \smallskip

	\underline{Case 1: $i=0$, $j=1$.} Using integration by parts, we decompose
	\begin{align*}
		&\quad y\pa_y\int_0^{+\infty}H^-_\xi(t-s,y,z)N_\xi(s,z)dz\\
		&=-y\int_0^{+\infty}\pa_zH^-_\xi(t-s,y,z)N_\xi(s,z)dz\\
		&=-y\int_0^{3y/4}\phi(z/y)\pa_z H^-_\xi(t-s,y,z)N_\xi(s,z)dz
		-\int_{y/2}^{3y/4}\phi'(z/y)H^-_\xi(t-s,y,z)N_\xi(s,z)dz\\
		&\quad+y\int_{y/2}^{1+\mu}\big(1-\phi(z/y)\big)H^-_\xi(t-s,y,z)\pa_z N_\xi(s,z)dz\\
		&\quad+y\int_{1+\mu}^{+\infty}H^-_\xi(t-s,y,z)\pa_z N_\xi(s,z)dz
		:=I_1+I_2+I_3+I_4.
	\end{align*}
	
	Estimate of $I_1$.
	For $0<z<3y/4$, we have $|y\pa_z H^-_\xi|\leq \frac{C}{\sqrt{\nu(t-s)}}e^{-\frac{(y-z)^2}{8\nu(t-s)}}e^{-\nu\xi^2(t-s)}$, which implies
	\begin{align*}
		e^{\eps_0(1+\mu-y)_+|\xi|}|I_1|\leq\int_0^{3y/4}\frac{C}{\sqrt{\nu(t-s)}}e^{-\frac{(y-z)^2}{8\nu(t-s)}}e^{-\nu\xi^2(t-s)}e^{\eps_0(1+\mu-z)_+|\xi|}|N_\xi(s,z)|dz.
	\end{align*}
	For $\eps_0, t$ small enough, we have
	\begin{align}\label{weight transform 1}
		e^{\eps_0(1+\mu)\frac{y^2}{\nu(1+t)}}
		e^{-\frac{(y-z)^2}{100\nu(t-s)}}
		\leq e^{\eps_0(1+\mu)\frac{z^2}{\nu(1+s)}}.
	\end{align}
	Thus,
	\begin{align*}
		\left\|e^{\eps_0(1+\mu-y)_+|\xi|}|I_1|\right\|_{\mu,t}
		&\leq C\int_0^{1+\mu}
		e^{\eps_0(1+\mu)\frac{y^2}{\nu(1+t)}}\int_0^{3y/4}\frac{1}{\sqrt{\nu(t-s)}}e^{-\frac{(y-z)^2}{8\nu(t-s)}}e^{-\nu\xi^2(t-s)}\\
		&\quad\quad\quad e^{\eps_0(1+\mu-z)_+|\xi|}|N_\xi(s,z)|dzdy\\
		&\leq C\int_0^{1+\mu}\int_0^{3y/4}\frac{1}{\sqrt{\nu(t-s)}}e^{-\frac{(y-z)^2}{10\nu(t-s)}}e^{\eps_0(1+\mu)\frac{z^2}{\nu(1+s)}}\\
		&\quad\quad\quad e^{\eps_0(1+\mu-z)_+|\xi|}|N_\xi(s,z)|dzdy\\
		&\leq C\left\|e^{\eps_0(1+\mu-z)_+|\xi|}N_\xi(s)\right\|_{\mu,s}.
	\end{align*}
	
	Estimate of $I_2$. Due to $\|\phi'\|_{L^\infty}\leq C$, we proceed as $I_1$ to obtain
	\begin{align*}
		\left\|e^{\eps_0(1+\mu-y)_+|\xi|}|I_2|\right\|_{\mu,t}
		\leq C\left\|e^{\eps_0(1+\mu-z)_+|\xi|}N_\xi(s)\right\|_{\mu,s}.
	\end{align*}
	
	Estimate of $I_3$.
	We obviously have
	\begin{align*}
		|I_3|\leq C\int_{y/2}^{1+\mu}\left| H^-_\xi(t-s,y,z)z\pa_z N_\xi(s,z)\right|dz.
	\end{align*}
	For $\eps_0$ small enough, we have
	\begin{align}\label{weight transform 2}
		e^{\eps_0(1+\mu-y)_+|\xi|}
		\leq e^{\eps_0(1+\mu-z)_+|\xi|}e^{\eps_0(z-y)_+|\xi|}
		\leq e^{\eps_0(1+\mu-z)_+|\xi|}e^{\frac{(y-z)^2}{100\nu(t-s)}}e^{\frac{\nu\xi^2(t-s)}{10}}.
	\end{align}
	Thus,
	\begin{align*}
		\left\|e^{\eps_0(1+\mu-y)_+|\xi|}|I_3|\right\|_{\mu,t}
		&\leq C\int_0^{1+\mu}e^{\eps_0(1+\mu)\frac{y^2}{\nu(1+t)}}e^{\eps_0(1+\mu-y)_+|\xi|}
		\int_{y/2}^{1+\mu}\left|H^-_\xi(t-s,y,z)z\pa_z N_\xi(s,z)\right|dzdy\\
		&\leq C\int_0^{1+\mu}\int_{y/2}^{1+\mu}
		\frac{1}{\sqrt{\nu(t-s)}}e^{-\frac{(y-z)^2}{10\nu(t-s)}}e^{\eps_0(1+\mu)\frac{z^2}{\nu(1+s)}}\\
		&\quad\quad\quad e^{\eps_0(1+\mu-z)_+|\xi|}\left|z\pa_z N_\xi(s,z)\right|dzdy\\
		&\leq C\left\| e^{\eps_0(1+\mu-z)_+|\xi|}z\pa_z N_\xi(s)\right\|_{\mu,s}.
	\end{align*}
	
	Estimate of $I_4$.
	We utilize \eqref{weight transform 2} to obtain
	\begin{align*}
		e^{\eps_0(1+\mu-y)_+|\xi|}|I_4|
		\leq C\int_{1+\mu}^{+\infty}\frac{1}{\sqrt{\nu(t-s)}}
		e^{-\frac{(y-z)^2}{10\nu(t-s)}}\left| \pa_z N_\xi(s,z)\right|dz,
	\end{align*}
	which implies
	\begin{align*}
		\left\| e^{\eps_0(1+\mu-y)_+|\xi|}I_4\right\|_{\mu,t}
		&\leq C\int_0^{1+\mu}e^{\eps_0(1+\mu)\frac{y^2}{\nu(1+t)}}\int_{1+\mu}^{+\infty}\frac{1}{\sqrt{\nu(t-s)}}e^{-\frac{(y-z)^2}{10\nu(t-s)}}\left|\pa_z N_\xi(s,z)\right|dzdy\\
		&\leq Ce^{\frac{2\eps_0}{\nu}}\left\|\pa_z N_\xi(s,z)\right\|_{L_z^1(z\geq 1+\mu)}.
	\end{align*}
	
	\underline{Case 2: $i=0$, $j=0$.}
	With $\phi$ as above, we use integration by parts to arrive at
	\begin{align*}
		&\quad\int_0^{+\infty}H^-_\xi(t-s,y,z)N_\xi(s,z)dz\\
		&=\int_0^{3y/4}\phi(z/y)H^-_\xi(t-s,y,z)N_\xi(s,z)dz
		+\int_{y/2}^{1+\mu}\big(1-\phi(z/y)\big)H^-_\xi(t-s,y,z)N_\xi(s,z)dz\\
		&\quad+\int_{1+\mu}^{+\infty}H^-_\xi(t-s,y,z)N_\xi(s,z)dz
		:=J_1+J_2+J_3.
	\end{align*}
	
	The estimates of $J_1,J_2, J_3$ are similar with $I_1, I_3, I_4$ in Case 1. Thus, we have
	\begin{align*}
		\left\| e^{\eps_0(1+\mu-y)_+|\xi|}J_1\right\|_{\mu,t}
		+\left\| e^{\eps_0(1+\mu-y)_+|\xi|}J_2\right\|_{\mu,t}
		&\leq C\left\| N(s)\right\|_{\mu,s},\\
		\left\| e^{\eps_0(1+\mu-y)_+|\xi|}J_3\right\|_{\mu,t}
		&\leq Ce^{\frac{2\eps_0}{\nu}}\left\|N_\xi(s,z)\right\|_{L_z^1(z\geq1+\mu)}.
	\end{align*}
	
	\underline{Case 3: $i=1$, $j=0$.}
	This case is similar with Case 2. The only difference is  to replace $N$ with $\pa_x N$. In addition, armed with Plancherel theorem and Minkowski inequality, we have
	\begin{align}\label{transform norm of N}
		\sum_{i+j\leq 1}&\left\|\left\|(\pa_x^i\pa_y^jN)_\xi(s)\right\|_{L^1_y(y\geq1+\mu)}\right\|_{L^1_\xi \cap L^2_\xi}
		\leq C\sum_{i+j\leq1}\left\|\left\|\pa_x^i\pa_y^j N(s)\right\|_{L^2_x}\right\|_{L^1_y(y\geq1+\mu)}\\
		\nonumber
		&\quad +C\sum_{i+j\leq1}\|(1+|\xi|^2)^{-1/2}\|_{L^2_\xi}\left\|(1+|\xi|^2)^{1/2}\|(\pa_x^i\pa_y^j N)_\xi(s)\|_{L^1_y(y\geq1+\mu)}\right\|_{L^2_\xi}\\
		\nonumber
		&\leq C\sum_{i+j\leq2}\left\|\left\|\pa_x^i\pa_y^jN(s)\right\|_{L^2_x}\right\|_{L^1_y(y\geq1+\mu)}.
	\end{align}

Combining all above estimates, we derive  that for $0<\mu<\mu_0-\gamma s$, $i+j\leq1$, 
	\begin{align*} 
		&\left\|\pa_x^i(y\pa_y)^j\int_0^{+\infty}H(t-s,y,z)N(s,z)dz\right\|_{Y^1_{\mu,t}\cap Y^2_{\mu,t}}\\
		&\leq C\|\pa_x^i(y\pa_y)^jN(s)\|_{Y^1_{\mu,t}\cap Y^2_{\mu,t}}
		+C\|N(s)\|_{Y^1_{\mu,t}\cap Y^2_{\mu,t}}
		+Ce^{\frac{2\eps_0}{\nu}}\sum_{i+j\leq2}\left\|\|\pa_x^i\pa_y^j N(s)\|_{L^2_x}\right\|_{L^1_y(y\geq1+\mu)},
	\end{align*}
 which implies
	\begin{align}\label{Step 1 in est of H}
 &\sum_{i+j\leq1}\left\|\pa_x^i(y\pa_y)^j\int_0^t\int_0^{+\infty}H(t-s,y,z)N(s,z)dzds\right\|_{Y^1_{\mu,t}\cap Y^2_{\mu,t}}\leq C\int_0^t \|N(s)\|_{W_{\mu,s}} ds.
	\end{align}

Next, we discuss the case $i+j=2$. Here, we change "analytical radius" $\mu$ to $\mu_1$ to overcome the loss of derivative. More precisely, we utilize the first inequality in Lemma \ref{analytic recovery} for the case $j\leq1$, and the second inequality in Lemma \ref{analytic recovery} for $j=2$. Thus, we obtain
\begin{align*}
	&\sum_{i+j=2} \left\|\pa_x^i(y\pa_y)^j\int_0^t\int_0^{+\infty}H(t-s,y,z)N(s,z)dzds \right\|_{Y^1_{\mu,t}\cap Y^2_{\mu,t}}\\
	&\leq C\int_0^t \big((\mu_0-\mu-\gamma s)^{-1}+(\mu_0-\mu-\gamma s)^{-\frac{1}{2}}(t-s)^{-\frac{1}{2}}\big)\\
	&\qquad\cdot\sum_{i+j\leq1}\left\|\pa_x^i(y\pa_y)^j \int_0^{+\infty}H(t-s,y,z)N(s,z)dz \right\|_{Y^1_{\mu_1,s}\cap Y^2_{\mu_1,s}}ds\\
	&\leq C\int_0^t \big((\mu_0-\mu-\gamma s)^{-1}+(\mu_0-\mu-\gamma s)^{-\frac{1}{2}}(t-s)^{-\frac{1}{2}}\big)
	\|N(s)\|_{W_{\mu_1,s}}ds.
\end{align*}

Combing all above estimates, we obtain the desired results.
\end{proof}

\begin{proof}[Proof of Lemma \ref{lem: om-2}]
 The lemma follows directly from Lemma \ref{analytic recovery} and the following bounds
\begin{align*}
		e^{\eps_0(1+\mu)\frac{y^2}{\nu(1+t)}}e^{-\frac{y^2}{8\nu(t-s)}}
		&\leq C, \quad\text{for}\quad \eps_0\ll1,\\
		\left\|(y\pa_y)^j\big(\frac{1}{\sqrt{\nu(t-s)}}e^{-\frac{y^2}{8\nu(t-s)}}\big)\right\|_{L^1_y}&\leq C \quad\text{for}\quad j=0,1,2.
\end{align*}
\end{proof}

To prove Lemma \ref{lem: om-3} and Lemma \ref{lem: om-4}, we divide the half plane into $(0,\sqrt{\nu t})$ and $(\sqrt{\nu t}, \infty)$.  We introduce a smooth cut-off function $\phi_r:\mathbb R_+\rightarrow[0,1]$ which satisfies $\phi_r(y)=1$ if $y\leq r$, $\phi_r(y)=0$ if $y\geq2r$, and $\phi^c_r=1-\phi_r$. We first derive the estimates on $(\sqrt{\nu t}, \infty)$.
\begin{lemma}\label{est of RN1}
For $\mu<\mu_0-\gamma t$ and $\mu_1=\mu+\frac{1}{2}(\mu_0-\mu-\gamma s)$, we have
	\begin{align*}
		&\sum_{i+j\leq1}\left\|\phi^c_{\sqrt{\nu t}}(y)\pa_x^i(y\pa_y)^j\int_0^t\int_0^{+\infty}R(t-s,y,z)N(s,z)dzds\right\|_{Y^1_{\mu,t}\cap Y^2_{\mu,t}}\\
		&\leq \frac{C}{t}\int_0^t\int_0^s
		\big(\sum_{i\leq1}\left\|\pa_x^i N(\tau)\right\|_{Y^1_{\mu,\tau}\cap Y^2_{\mu,\tau}}
		+e^{\frac{2\eps_0}{\nu}}\sum_{i\leq2}\left\|\left\|\pa_x^iN(\tau)\right\|_{L^2_x}\right\|_{L^1_y(y\geq1+\mu)}\big)d\tau ds,
	\end{align*}
	and
	\begin{align*}
		&\sum_{i+j=2}\left\|\phi^c_{\sqrt{\nu t}}(y)\pa_x^i(y\pa_y)^j\int_0^t\int_0^{+\infty}R(t-s,y,z)N(s,z)dzds\right\|_{Y^1_{\mu,t}\cap Y^2_{\mu,t}}\\
		&\leq \frac{C}{t}\int_0^t(\mu_0-\mu-\gamma s)^{-1}
		\int_0^s \big(\sum_{i\leq1}\left\|\pa_x^i N(\tau)\right\|_{Y^1_{\mu_1,\tau}\cap Y^2_{\mu_1,\tau}}
		+e^{\frac{2\eps_0}{\nu}}\sum_{i\leq2}\left\|\left\|\pa_x^iN(\tau)\right\|_{L^2_x}\right\|_{L^1_y(y\geq1+\mu_1)}\big)d\tau ds.
	\end{align*}
\end{lemma}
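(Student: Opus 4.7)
The starting point is the identity in Lemma~\ref{prop of R 1}(3), applied with $f_\xi=N_\xi$ and $h_\xi\equiv 0$, which converts the single time convolution of $R$ against $N$ into the double time representation
\[
\int_0^t\!\!\int_0^{+\infty}\!\!R_\xi(t-s,y,z)N_\xi(s,z)\,dz\,ds = 2\nu\!\int_0^t\!\!\int_0^s\!\!\int_0^{+\infty}\!\!(\xi^2-|\xi|\pa_y)\bigl(K_{s-\tau}(y+z)\bigr)N_\xi(\tau,z)\,dz\,d\tau\,ds,
\]
where $K_u(Y)=e^{-\nu u\xi^2}g(\nu u,Y)$. This is split into an $A$-piece (the $\xi^2$ contribution) and a $B$-piece (the $|\xi|\pa_y$ contribution), each handled by Gaussian kernel estimates once the derivatives and weights have been rearranged.

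For the $i+j\leq 1$ bound, I would push $\pa_x^i(y\pa_y)^j$ through onto the kernel. Derivatives in $x$ produce $(i\xi)^i$, one power of which is partially shifted to $N_\xi$ as $\pa_xN$ when useful; the factor $(y\pa_y)^j$ applied to $g(\nu u,y+z)$ produces at most the multiplier $(y+z)/\sqrt{\nu u}$, absorbed into a slightly wider Gaussian via $y\leq y+z$ and $re^{-r^2}\leq C$. For the $B$-piece I keep $\pa_y$ on $g$ and absorb $(y+z)\,g(\nu u,y+z)$ into a wider Gaussian, thereby avoiding the boundary term at $z=0$ that the alternative $\pa_y g=\pa_z g$ plus integration by parts in $z$ would produce (since no useful bound on $N(\tau,0)$ is immediately available). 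The weight transfers
\[
e^{\eps_0(1+\mu)\frac{y^2}{\nu(1+t)}}e^{-\frac{(y+z)^2}{100\nu u}}\leq e^{\eps_0(1+\mu)\frac{z^2}{\nu(1+\tau)}},\quad e^{\eps_0(1+\mu-y)_+|\xi|}\leq e^{\eps_0(1+\mu-z)_+|\xi|}e^{\nu u\xi^2/10},
\]
used exactly as in \eqref{weight transform 1}--\eqref{weight transform 2} of the proof of Lemma~\ref{lem: om-1}, match the $Y_{\mu,t}$ weight on the $y$-side to the $Y_{\mu,\tau}$ weight on the $z$-side. Combining these with the elementary bound $\nu\xi^2 e^{-\nu u\xi^2/2}\leq C/u$ and $u\leq t-\tau\leq t$, together with the restriction $y\geq\sqrt{\nu t}$ imposed by $\phi^c_{\sqrt{\nu t}}$, integration in $u=s-\tau$ yields a clean $1/t$ prefactor after the surviving Gaussian in $y$ is integrated over $y\in(\sqrt{\nu t},1+\mu)$. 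The bulk contribution $z\leq 1+\mu$ of $N$ is then controlled by $\|\pa_x^iN(\tau)\|_{Y_{\mu,\tau}}$ for $i\leq 1$; the tail $z\geq 1+\mu$ is handled by passing to $L^2_x$ via Plancherel and an $\ell^1$--$\ell^2$ Sobolev embedding in $\xi$ as in \eqref{transform norm of N}, producing the amplification $e^{2\eps_0/\nu}$ and the extra $\pa_x$ derivative.

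For the $i+j=2$ inequality, I would apply the $i+j\leq 1$ bound just established at the inflated analytic radius $\mu_1=\mu+\tfrac12(\mu_0-\mu-\gamma s)$, and then invoke Lemma~\ref{analytic recovery} to pass from one derivative at radius $\mu_1$ to two derivatives at radius $\mu$, paying the characteristic loss $(\mu_1-\mu)^{-1}\sim(\mu_0-\mu-\gamma s)^{-1}$; this is the same reduction used for the second-order terms in the proof of Lemma~\ref{lem: om-1}. The main obstacle throughout is the simultaneous bookkeeping of the three weights (parabolic Gaussian in $y$, Fourier-analytic in $\xi$, analytic radius $\mu$) across the double time integral, and in particular verifying that the cutoff $\phi^c_{\sqrt{\nu t}}$ is precisely what is needed to extract the stated $1/t$ prefactor rather than merely a constant.
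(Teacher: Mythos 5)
Your proposal follows essentially the same route as the paper: apply Lemma~\ref{prop of R 1}(3) with $h_\xi\equiv 0$ to convert the single-time $R\ast N$ convolution into the double-time integral of a Gaussian kernel against $N$, push the derivatives $\pa_x^i(y\pa_y)^j$ onto that kernel, absorb the resulting polynomial factors into widened Gaussians, transfer the analytic and parabolic weights via \eqref{weight transform 1}--\eqref{weight transform 2}, split the $z$-integral at $1+\mu$ into a $Y_{\mu,\tau}$ bulk and an $L^1_y L^2_x$ tail via \eqref{transform norm of N}, and finally use Lemma~\ref{analytic recovery} with the inflated radius $\mu_1$ for the $i+j=2$ case. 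All of these match the paper.

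The one place your description is imprecise is the extraction of the $1/t$ prefactor. You write that ``integration in $u=s-\tau$ yields a clean $1/t$ prefactor'' aided by ``$u\leq t-\tau\leq t$'' and the $C/u$ bound on $\nu\xi^2 e^{-\nu u\xi^2/2}$. As stated this does not work: $C/u$ is \emph{larger} than $C/t$, and the double time integral $\int_0^t\int_0^s\cdot\,d\tau\,ds$ must survive intact on the right-hand side, so one cannot integrate out $u$. The paper instead proves a pointwise kernel bound: for $j\le 2$ and $y\ge\sqrt{\nu t}$,
\begin{align*}
\left|(-\nu\xi^2+\nu|\xi|\pa_y)(y\pa_y)^j\bigl(e^{-\nu(s-\tau)\xi^2}\,e^{-\frac{(y+z)^2}{4\nu(s-\tau)}}\bigr)\right|
\leq\frac{C}{t}\,e^{-\nu(s-\tau)\xi^2/2}\,e^{-\frac{(y+z)^2}{5\nu(s-\tau)}}.
\end{align*}
Here the $1/t$ appears \emph{before} any time integration: the factor $1/(s-\tau)$ produced by the symbol is converted into $1/t$ by sacrificing part of the Gaussian, since $y\ge\sqrt{\nu t}$ forces $\frac{(y+z)^2}{\nu(s-\tau)}\ge t/(s-\tau)$ and $\frac{1}{u}e^{-ct/u}\le C/t$. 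You correctly identified the cutoff $\phi^c_{\sqrt{\nu t}}$ as the source of the $1/t$, but the mechanism is a pointwise trade between the $1/(s-\tau)$ singularity and the Gaussian decay, not an integration over $u$; with that correction your outline is complete.
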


\begin{proof}
	For $i+j\leq1$, we take advantage of Lemma \ref{prop of R 1} to have
	\begin{align*}
		&\left\|\phi^c_{\sqrt{\nu t}}(y)\pa_x^i(y\pa_y)^j\int_0^t\int_0^{+\infty}R(t-s,y,z)N(s,z)dzds\right\|_{Y^1_{\mu, t}\cap Y^2_{\mu, t}}\\
		&=\Bigg\|\Bigg\|\phi^c_{\sqrt{\nu t}}e^{\eps_0(1+\mu)\frac{y^2}{\nu(1+t)}}e^{\eps_0(1+\mu-y)_+|\xi|}\nu\int_0^t\int_0^s\int_0^{+\infty}(-\xi^2+\xi\pa_y)(y\pa_y)^j\\
		&\quad\quad
		\big(e^{-\nu(s-\tau)\xi^2}\frac{1}{\sqrt{\nu(t-s)}}e^{-\frac{(y+z)^2}{4\nu(t-s)}}\big)\xi^i N_\xi(\tau,z)dz d\tau ds \Bigg\|_{L^1_y(0,1+\mu)}\Bigg\|_{L^1_\xi\cap L^2_\xi}
		:=\|\|A\|_{L^1_y(0,1+\mu)}\|_{L^1_\xi\cap L^2_\xi}.
	\end{align*}
	We use \eqref{weight transform 1}, \eqref{weight transform 2} and the following inequality which holds for $j=0,1,2$ and $y\geq\sqrt{\nu t}$
	\begin{align*}
		\left|(-\nu\xi^2+\nu|\xi|\pa_y)(y\pa_y)^j\big(e^{-\nu(s-\tau)\xi^2} e^{-\frac{(y+z)^2}{4\nu(s-\tau)}}\big)\right|
		\leq \frac{C}{t}e^{-\nu(s-\tau)\xi^2/2}
		e^{-\frac{(y+z)^2}{5\nu(s-\tau)}},
	\end{align*}
	to obtain
\begin{align*}
	A\leq& \frac{C}{t}\int_0^t\int_0^s\int_0^{1+\mu}\frac{1}{\sqrt{\nu(s-\tau)}}e^{-\frac{(y+z)^2}{10\nu(s-\tau)}}e^{\eps_0(1+\mu)\frac{z^2}{\nu(1+s)}}e^{\eps_0(1+\mu-z)_+|\xi|}\left|\xi^i N_\xi(\tau,z)\right|dzd\tau ds\\
	&+\frac{C}{t}\int_0^t\int_0^s\int_{1+\mu}^{+\infty}e^{\frac{2\eps_0}{\nu}}\frac{1}{\sqrt{\nu(s-\tau)}}e^{-\frac{(y+z)^2}{10\nu(s-\tau)}}\left|\xi^i N_\xi(\tau,z)\right|dzd\tau ds.
\end{align*}
Thus,
\begin{align*}
	\|A\|_{L^1_y(0,1+\mu)}
	\leq& \frac{C}{t}\int_0^t\int_0^s\int_0^{1+\mu}
	e^{\eps_0(1+\mu)\frac{z^2}{\nu(1+s)}}e^{\eps_0(1+\mu-z)_+|\xi|}\left|\xi^iN_\xi(\tau,z)\right|dzd\tau ds\\
	&+\frac{C}{t}\int_0^t\int_0^s\int_{1+\mu}^{+\infty}e^{\frac{2\eps_0}{\nu}}\left|\xi^iN_\xi(\tau,z)\right|dzd\tau ds.
\end{align*}

Now we utilize \eqref{transform norm of N} to obtain
\begin{align*}
	\|\|A\|_{L^1_y(0,1+\mu)}\|_{L^1_\xi\cap L^2_\xi}&\leq \frac{C}{t}\int_0^t\int_0^s\big(\|\pa_x^iN(\tau)\|_{Y^1_{\mu,\tau}\cap Y^2_{\mu,\tau}}
	+e^{\frac{2\eps_0}{\nu}}
	\sum_{i\leq1}\left\|\left\|\xi^iN_\xi(\tau)\right\|_{L^1_y(y\geq1+\mu)}\right\|_{L^1_\xi\cap L^2_\xi}\big)d\tau ds\\
	&\leq \frac{C}{t}\int_0^t\int_0^s
		\big(\left\|\pa_x^i N(\tau)\right\|_{Y^1_{\mu,\tau}\cap Y^2_{\mu,\tau}}
		+e^{\frac{2\eps_0}{\nu}}\sum_{i\leq2}\left\|\left\|\pa_x^iN(\tau)\right\|_{L^2_x}\right\|_{L^1_y(y\geq1+\mu)}\big)d\tau ds.
\end{align*}
Thus, we obtain the first inequality. The second inequality is obtained by Lemma \ref{analytic recovery}.
 \end{proof}
\medskip

By the same argument, we have

\begin{lemma}\label{est of RB1}
For $\mu<\mu_0-\gamma t$ and $\mu_1=\mu+\frac{1}{2}(\mu_0-\mu-\gamma s)$, we have
	\begin{align*}
		\sum_{i+j\leq1}\left\|\phi^c_{\sqrt{\nu t}}\pa_x^i(y\pa_y)^j\int_0^t R(t-s,y,0)B(s)ds\right\|_{Y^1_{\mu, t}\cap Y^2_{\mu, t}}
		\leq \frac{C}{t}\int_0^t\int_0^s
		\sum_{i\leq1}\left\|e^{\eps_0(1+\mu)|\xi|}\xi^iB_\xi(\tau)\right\|_{L^1_\xi\cap L^2_\xi}d\tau ds,
	\end{align*}
	and
	\begin{align*}
		&\sum_{i+j=2}\left\|\phi^c_{\sqrt{\nu t}}\pa_x^i(y\pa_y)^j\int_0^t R(t-s,y,0)B(s)ds\right\|_{Y^1_{\mu, t}\cap Y^2_{\mu, t}}\\ &\leq \frac{C}{t}\int_0^t(\mu_0-\mu-\gamma s)^{-1}\int_0^s
		\sum_{i\leq1}\left\|e^{\eps_0(1+\mu_1)|\xi|}\xi^iB_\xi(s)\right\|_{L^1_\xi\cap L^2_\xi}d\tau ds.
	\end{align*}
\end{lemma}

Next, we dreive the estimates on $(0, \sqrt{\nu t})$.
\begin{lemma}\label{est of RN2}
For $\mu<\mu_0-\gamma t$ and $\mu_1=\mu+\frac{1}{2}(\mu_0-\mu-\gamma s)$, we have
	\begin{align*}
		&\sum_{i+j\leq1}\left\|\phi_{\sqrt{\nu t}}(y)\pa_x^i(y\pa_y)^j\int_0^t\int_0^{+\infty}R(t-s,y,z)N(s,z)dzds\right\|_{Y^1_{\mu, t}\cap Y^2_{\mu, t}}\\
		&\leq C\int_0^t\sum_{i\leq1}\left\|\pa_x^iN(s)\right\|_{Y^1_{\mu, t}\cap Y^2_{\mu, t}}
		+\sum_{i\leq2}\left\|\left\|\pa_x^i N(s)\right\|_{L^2_x}\right\|_{L^1_y(y\geq1+\mu)}ds,
	\end{align*}
	and
	\begin{align*}
		&\sum_{i+j=2}\left\|\phi_{\sqrt{\nu t}}(y)\pa_x^i(y\pa_y)^j\int_0^t\int_0^{+\infty}R(t-s,y,z)N(s,z)dzds\right\|_{Y^1_{\mu, t}\cap Y^2_{\mu, t}}\\
		&\leq C\int_0^t(\mu_0-\mu-\gamma s)^{-1}\big(\sum_{i\leq1}\left\|\pa_x^iN(s)\right\|_{Y^1_{\mu_1,s}\cap Y^2_{\mu_1,s}}
		+\sum_{i\leq2}\left\|\left\|\pa_x^i N(s)\right\|_{L^2_x}\right\|_{L^1_y(y\geq1+\mu_1)}\big)ds.
	\end{align*}
\end{lemma}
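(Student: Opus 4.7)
The plan is to mirror the treatment of Lemma \ref{est of RN1}, but to replace the reformulation of $R$ via the heat semigroup (which was needed to extract a $1/t$ factor in the far-field regime $y \geq \sqrt{\nu t}$) by the direct pointwise bound on $(y\pa_y)^k R_\xi$ from Lemma \ref{prop of R 1} (2). The key structural observation is that the cut-off $\phi_{\sqrt{\nu t}}(y)$ confines $y$ to a slab of width $\sim \sqrt{\nu t}$, and after $L^1_y$ integration on this slab the two apparently singular ingredients in that bound (the factor $a = |\xi|+\nu^{-1/2}$ and the factor $(\nu(t-s))^{-1/2}$) become harmless.

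For $i+j \leq 1$, I would differentiate under the integral, bringing all of $\pa_x^i(y\pa_y)^j$ onto the kernel $R_\xi$, and insert the Lemma \ref{prop of R 1} (2) estimate with $k = j \leq 1$. I would then split the $z$-integral at $z = 1+\mu$: on $[0,1+\mu]$ the weights $e^{\eps_0(1+\mu)y^2/(\nu(1+t))}e^{\eps_0(1+\mu-y)_+|\xi|}$ are transferred to $z$ via \eqref{weight transform 1} and \eqref{weight transform 2}, with the losses absorbed into the Gaussian/exponential decay of $R_\xi$, while on $[1+\mu,\infty)$ I simply bound the weight by $e^{2\eps_0/\nu}$. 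The crux is the $L^1_y$ integration over $[0,2\sqrt{\nu t}]$: for the exponential piece, using $a \geq \nu^{-1/2}$ one has $\int_0^{2\sqrt{\nu t}} a\, e^{-\theta_0 a y/2}\,dy \leq C$, while for the Gaussian piece, the change of variable $y' = y/\sqrt{\nu(t-s)}$ gives a uniformly bounded integral in $y'$. Thus after $y$-integration the kernel is controlled by $C + Ce^{-\nu\xi^2(t-s)/8}$ times the transferred weight on $z$, and summing in $\xi$ using \eqref{transform norm of N} (to convert the $L^1_y(y\geq 1+\mu)$ contribution from an $\ell^1_\xi$ sum into an $L^2_x$ norm with one extra $x$-derivative) and then integrating in $s$ yields the first inequality.

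For the $i+j = 2$ inequality, I would invoke Lemma \ref{analytic recovery} to trade one derivative for a factor $(\mu_0-\mu-\gamma s)^{-1}$, at the price of enlarging the analyticity radius from $\mu$ to $\mu_1 = \mu + \tfrac{1}{2}(\mu_0-\mu-\gamma s)$; applying the $i+j\leq 1$ bound already obtained, but evaluated at radius $\mu_1$, gives the claimed estimate. The main obstacle is the bookkeeping in the $L^1_y$ integration of Step~3: one must verify that both pieces of the Lemma \ref{prop of R 1} (2) bound survive the thin-slab integration with a constant that is uniform in $s$ (and in particular free of any new time singularity), since otherwise an additional blow-up in $t-s$ would propagate through the $s$-integration. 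This is precisely what distinguishes the near-boundary regime $y \lesssim \sqrt{\nu t}$ from the far-field regime treated in Lemma \ref{est of RN1}, and it is what permits the cruder bound here without the extra factor $1/t$ and without invoking part (3) of Lemma \ref{prop of R 1}.
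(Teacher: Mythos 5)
Your plan is essentially the paper's own proof of this lemma: replace the heat--semigroup reformulation of $R$ (needed to extract the $1/t$ gain in Lemma \ref{est of RN1}) by the direct pointwise bounds of Lemma \ref{prop of R 1}(2), transfer the weight $e^{\eps_0(1+\mu-y)_+|\xi|}$ onto $z$, take the $L^1_y$ integral of the weighted kernel, apply \eqref{transform norm of N}, and invoke Lemma \ref{analytic recovery} for $i+j=2$.

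One mechanism is misidentified, though, in what you call the ``key structural observation.'' The factors $a=|\xi|+\nu^{-1/2}$ and $(\nu(t-s))^{-1/2}$ are already harmless after $L^1_y$ integration over the \emph{entire} half-line: $\int_0^\infty a\,e^{-\theta_0 a y/2}\,dy=2/\theta_0$ and $\int_0^\infty(\nu(t-s))^{-1/2}e^{-\theta_0(y+z)^2/(4\nu(t-s))}\,dy\le C$, uniformly in $a$, $\xi$, $s$ and $z$. Consequently, restricting $y$ to $[0,2\sqrt{\nu t}]$ does nothing to tame those two factors, and you also do not need to invoke \eqref{weight transform 1} to transfer the $y^2$-Gaussian weight (indeed your sketch does both, which is redundant). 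What the cut-off $\phi_{\sqrt{\nu t}}$ actually buys---and what the paper uses---is the elementary bound
\begin{align*}
\phi_{\sqrt{\nu t}}(y)\,e^{\eps_0(1+\mu)\frac{y^2}{\nu(1+t)}}\le C,
\end{align*}
which keeps the $y^2$-Gaussian weight bounded on the slab $\{y\le 2\sqrt{\nu t}\}$; it is precisely the unboundedness of this weight for $y\gtrsim\sqrt{\nu t}$ that forces the sharper argument of Lemma \ref{est of RN1}. (Also note that the paper's transfer of the $e^{\eps_0(1+\mu-y)_+|\xi|}$ weight onto $z$ uses a variant of \eqref{weight transform 2} adapted to the $(y+z)$-decay of $R_\xi$, not \eqref{weight transform 2} verbatim, and that $\pa_x^i$ lands on $N$ rather than on the kernel.) With these small corrections your sketch coincides with the paper's argument.
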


\begin{proof}
	For $\eps_0$ small enough, we have
	\begin{align*}
		e^{\eps_0(1+\mu-y)_+|\xi|}
		\leq e^{\eps_0(1+\mu-z)_+|\xi|}e^{\eps_0(z-y)_+|\xi|}
		\leq Ce^{\eps_0(1+\mu-z)_+|\xi|}\cdot
		\left\{
		\begin{aligned}
			&e^{\frac{\theta_0 }{4}a(y+z)},\\
			&e^{\frac{\theta_0}{4}\frac{(y+z)^2}{\nu(t-s)}}e^{\nu\xi^2(t-s)/8},
		\end{aligned}
		\right.
	\end{align*}
	here $a=|\xi|+\frac{1}{\sqrt{\nu}}$.
	Then we use Lemma \ref{prop of R 1} to obtain
	\begin{align*}
		e^{\eps_0(1+\mu-y)_+|\xi|}\left|(y\pa_y)^kR_\xi(t-s,y,z)\right|
		\leq Ce^{\eps_0(1+\mu-z)_+|\xi|}
		\big(ae^{-\frac{\theta_0}{4}a(y+z)}+\frac{1}{\sqrt{\nu(t-s)}}e^{-\frac{\theta_0}{4}\frac{(y+z)^2}{\nu(t-s)}}\big).
	\end{align*}
	Thus,
	\begin{align*}
		\left\|e^{\eps_0(1+\mu-y)_+|\xi|}(y\pa_y)^kR_\xi(t-s,y,z)\right\|_{L^1_y}
		\leq Ce^{\eps_0(1+\mu-z)_+|\xi|}.
	\end{align*}
Using the fact $\phi_{\sqrt{\nu t}}(y)e^{\eps_0(1+\mu)\frac{y^2}{\nu(1+t)}}\leq C$ to obtain
	\begin{align*}
		&\quad\sum_{i+j\leq1}\left\|\phi_{\sqrt{\nu t}}(y)\pa_x^i(y\pa_y)^j\int_0^t\int_0^{+\infty}R(t-s,y,z)N(s,z)dzds\right\|_{Y^1_{\mu, t}\cap Y^2_{\mu, t}}\\
		&\leq C\int_0^t\left\|\int_0^{+\infty}e^{\eps_0(1+\mu-z)_+|\xi|}\sum_{i\leq 1}\left|\xi^iN_\xi(s,z)\right|dz\right\|_{L^1_\xi\cap L^2_\xi} ds\\
		&\leq C\int_0^t\big(\sum_{i\leq1}\left\|\pa_x^iN(s)\right\|_{Y^1_{\mu, s}\cap Y^2_{\mu, s}}
		+\sum_{i\leq1}\left\|\left\|\xi^iN_\xi(s)\right\|_{L^1_y(y\geq1+\mu)}\right\|_{L^1_\xi\cap L^2_\xi}\big)ds.
	\end{align*}
	Finally, we use \eqref{transform norm of N} to obtain the first inequality of this lemma. The second inequality is obtained by Lemma \ref{analytic recovery}.
\end{proof}

By the same argument, we have
\begin{lemma}\label{est of RB2}
For $\mu<\mu_0-\gamma t$ and $\mu_1=\mu+\frac{1}{2}(\mu_0-\mu-\gamma s)$, we have
	\begin{align*}
		\sum_{i+j\leq1}\left\|\phi_{\sqrt{\nu t}}\pa_x^i(y\pa_y)^j\int_0^t R(t-s,y,0)B(s)ds\right\|_{Y^1_{\mu, t}\cap Y^2_{\mu, t}}
		\leq C\int_0^t
		\sum_{i\leq1}\left\|e^{\eps_0(1+\mu)|\xi|}\xi^iB_\xi(s)\right\|_{L^1_\xi\cap L^2_\xi}ds,
	\end{align*}
	and
	\begin{align*}
		&\sum_{i+j=2}\left\|\phi_{\sqrt{\nu t}}\pa_x^i(y\pa_y)^j\int_0^t R(t-s,y,0)B(s)ds\right\|_{Y^1_{\mu, t}\cap Y^2_{\mu, t}} \\
		&\leq C\int_0^t(\mu_0-\mu-\gamma s)^{-1}
		\sum_{i\leq1}\left\|e^{\eps_0(1+\mu_1)|\xi|}\xi^iB_\xi(s)\right\|_{L^1_\xi\cap L^2_\xi}ds.
	\end{align*}
\end{lemma}

Combining Lemmas \ref{est of RN1}–\ref{est of RB2}, we obtain Lemmas \ref{lem: om-3} and \ref{lem: om-4}.

\section{Estimates of the vorticity away from the boundary}\label{Estimates away from the Boundary}

In this section, we present the estimates for the remaining parts of the energy $e(t)$: $\|e^\Psi\chi_0\psi\omega(t)\|_{L^2}$ and $\|\chi_0\omega(t)\|_{L^p}$. 

\subsection{Estimate of $\|e^\Psi\chi_0\psi\omega\|_{L^2}$}

In this subsection, we prove Proposition \ref{weighted est 1}.

\begin{proof}
	Multiplying $\chi_0$ on both sides of Navier-Stokes system \eqref{eq: NS vorticity} leads to
	\begin{align}\label{eq: chi0 NS}
		\pa_t(\chi_0\omega)+U\cdot\nabla(\chi_0\omega)-\nu\Delta(\chi_0\omega)
		=v\chi_0'\omega-2\nu\chi_0'\pa_y\omega-\nu\chi_0''\omega.
	\end{align}
	We take $L^2$ inner product with $e^{2\Psi}\psi^2\chi_0\omega$ to obtain
	\begin{align*}
		&\frac{1}{2}\frac{d}{dt}\left\|e^\Psi\chi_0\psi\omega\right\|^2_{L^2}
		+\frac{20\gamma\eps_0}{\nu}\left\|e^\Psi\chi_0\psi\omega\right\|^2_{L^2(y_1(t)\leq y\leq y_2(t))}
		 -\nu\left\langle\Delta(\chi_0\omega),e^{2\Psi}\psi^2\chi_0\omega\right\rangle\\
		&=-\left\langle U\cdot\nabla(\chi_0\omega),e^{2\Psi}\psi^2\chi_0\omega\right\rangle
		+\langle v\chi_0'\omega,\psi^2\chi_0\omega\rangle
		-2\nu\langle\chi_0'\pa_y\omega,\psi^2\chi_0\omega\rangle
		-\nu\langle\chi_0''\omega,\psi^2\chi_0\omega\rangle:=\sum_{i=1}^4 I_i,
	\end{align*}
	here we notice that $\Psi=0$ on $\operatorname{supp}\chi_0'$.
	
	For the dissipative term, we take advantage of integration by parts to obtain
	\begin{align}\label{dissipative term 1}
		-\nu\left\langle\Delta(\chi_0\omega),e^{2\Psi}\psi^2\chi_0\omega\right\rangle
		&=\nu\left\|e^{2\Psi}\psi^2\nabla(\chi_0\omega)\right\|^2_{L^2}
		+\nu\left\langle\nabla(\chi_0\omega),\nabla(e^{2\Psi}\psi^2)\chi_0\omega\right\rangle.
	\end{align}
	We notice that when $1/4\leq y$, it holds
	\begin{align}\label{property of weight}
		|\nabla(e^{2\Psi}\psi^2)|
		&\leq 2|\pa_y\Psi|e^{2\Psi}\psi^2
		+2e^{2\Psi}\psi|\nabla\psi|\leq \frac{C}{\nu}e^{2\Psi}\psi^2\mathbb I_{(y_1(t)\leq y\leq y_2(t))}
		+Ce^{2\Psi}\psi^2,
	\end{align}
	which implies that the right hand side of \eqref{dissipative term 1} is larger than
	\begin{align*}
		&\nu\left\|e^{2\Psi}\psi^2\nabla(\chi_0\omega)\right\|^2_{L^2}
		-C\int_{y_1(t)\leq y\leq y_2(t)}e^{2\Psi}\psi^2|\nabla(\chi_0\omega)|\cdot|\chi_0\omega|dxdy\\
		&\quad-C\nu\int_{\mathbb R^2_+}e^{2\Psi}\psi^2|\nabla(\chi_0\omega)|\cdot|\chi_0\omega|dxdy\\
		&\geq \frac{9\nu}{10}\left\|e^{2\Psi}\psi^2\nabla(\chi_0\omega)\right\|^2_{L^2}
		-\frac{C}{\nu}\left\|e^\Psi\chi_0\psi\omega\right\|^2_{L^2(y_1(t)\leq y\leq y_2(t))}
		-C\nu\left\|e^\Psi\chi_0\psi\omega\right\|^2_{L^2}.
	\end{align*}
	
	For $I_1$, we utilize integration by parts, $\dv U=0$ and \eqref{property of weight} to obtain
	\begin{align*}
		|I_1|&\leq \f12\int_{\mathbb R^2_+}|U|\cdot|\nabla(e^{2\Psi}\psi^2) |(\chi_0\omega)^2 dxdy\\
		&\leq \frac{C\|U\|_{L^\infty}}{\nu}\int_{y_1(t)\leq y\leq y_2(t)}e^{2\Psi}\psi^2(\chi_0\omega)^2dxdy
		+C\|U\|_{L^\infty}\int_{\mathbb R^2_+}e^{2\Psi}\psi^2(\chi_0\omega)^2dxdy\\
		&\leq \frac{C\big(e(t)+1\big)}{\nu}\left\|e^\Psi\chi_0\psi\omega\right\|^2_{L^2(y_1(t)\leq y\leq y_2(t))}
		+C\big(e(t)+1\big)^3,
	\end{align*}
	here we used Lemma \ref{velocity estimates 1} in the last step.
	
	For $I_2\sim I_4$, Sobolev embedding and Lemma \ref{velocity estimates 1}, Lemma \ref{est of omega c} give rise to
	\begin{align*}
		&\quad |I_2|+|I_3|+|I_4|\\
		&\leq C\|v\|_{L^\infty}\| \omega\|_{L^2(\frac{1}{4}\leq y\leq\frac{3}{8})}\|e^\Psi\psi\chi_0\omega\|_{L^2}
		+C\nu\| \omega\|_{H^1(\frac{1}{4}\leq y\leq\frac{3}{8})}\|e^\Psi\psi\chi_0\omega\|_{L^2}\\
		&\leq C\big(e(t)+1\big)^2\sum_{j\leq1}\| (y\pa_y)^j(\omega-\omega_c+\omega_c)\|_{Y^1_{\mu,t}}
		+C\nu e(t)\sum_{j\leq2}\| (y\pa_y)^j(\omega-\omega_c+\omega_c)\|_{Y^1_{\mu,t}}\\
		&\leq C\big(e(t)+1\big)^3
		+C\nu(\mu_0-\mu-\gamma t)^{-\alpha}e(t)^2.
	\end{align*}

Collecting these estimates together and integrating from $0$ to $t$, we use the definition of $E(t)$ and choose a suitable $\gamma$ to conclude the proof.
\end{proof}

\subsection{Estimate of $\|\chi_0\omega(t)\|_{L^p}$}
This subsection is devoted to proving Proposition \ref{L4 est of omega 1} and obtain the estimate of $\|\chi_0\omega\|_{L^p}$, which is used to control $\|U\|_{L^\infty}$.

\begin{proof}
Taking inner product with $\chi^2_0\omega |\chi_0\omega|^{p-2}$ and integrating by parts give
	\begin{align*}
		& \frac{d}{dt}\|\chi_0\omega\|^p_{L^p}
		+ \nu\int_{\mathbb R^2_+}|\nabla(\chi_0\omega)|^2 |\chi_0\omega|^{p-2} dxdy\\
		& \leq  C\int_{\mathbb R^2_+} | \chi_0' v\omega |\cdot |\chi_0\omega|^{p-1}  +
		 C\nu   \int_{\mathbb R^2_+} | \chi_0'\pa_y\omega|\cdot |\chi_0\omega|^{p-1} 		+C\nu  \int_{\mathbb R^2_+}  |\chi_0''\omega|\cdot  |\chi_0\omega|^{p-1} \\
&\leq C\|v\|_{L^\infty}\|\omega\|_{L^p(\frac{1}{4}\leq y\leq \frac{3}{8})}\|\chi_0\omega\|_{L^p}^{p-1}
		+C\nu\sum_{j\leq1}\|\pa_y^j\omega\|_{L^p(\frac{1}{4}\leq y\leq \frac{3}{8})}\|\chi_0\omega\|^{p-1}_{L^p}\\
		&\leq C\big(e(t)+1\big)^p\sum_{j\leq1}\|(y\pa_y)^j(\omega-\omega_c+\omega_c)\|_{Y^1_{\mu,t}}+C\nu\sum_{j\leq 2}\|(y\pa_y)^j(\omega-\omega_c+\omega_c)\|_{Y^1_{\mu,t}}\|\chi_0\omega\|^{p-1}_{L^p}\\
		&\leq 
		C\big(e(t)+1\big)^{p+1}+C\nu(\mu_0-\mu-\gamma t)^{-\alpha}\big(e(t)+1\big)^p,
	\end{align*}
	where we used Lemma \ref{est of omega c} in the last step. Integrating over $0\leq s\leq t$, we obtain
	\begin{align*}
		\sup_{[0,t]}\|\chi_0\omega\|_{L^p}^p
		\leq C\|\chi_0\omega_0\|_{L^p}^p
		+C(t+\frac{\nu}{\gamma})\big(E(t)+1\big)^{p+1}.
	\end{align*}
	\end{proof}

\subsection{Estimates  in a strip $\frac{7}{8}\leq y\leq 4$ }\label{Sobolev estimates in a strip}

In this subsection, we prove Proposition \ref{Sobolev estimate} through the following lemmas. We only prove for $\omega$, since $x\omega$ is estimated in a same way.

\begin{lemma}\label{Sobolev est 1}
	There exists $T_0$ small enough such that for $0\leq t\leq T_0$,
	\begin{align*}
		\sup_{[0,t]}\|\omega\|^2_{L^2(\frac{1}{2}+\frac{3}{32}\leq y\leq 5-\frac{1}{6})}
		+\nu\int_0^t\|\nabla\omega\|^2_{L^2(\frac{1}{2}+\frac{3}{32}\leq y\leq 5-\frac{1}{6})}ds
		\leq C\nu^8 t \big(E(t)+1\big)^3 e^{-\frac{10\eps_0}{\nu}}.
	\end{align*}
\end{lemma}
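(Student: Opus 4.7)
The plan is to derive a localized energy estimate for $\omega$ on the strip $[\tfrac12+\tfrac{3}{32},5-\tfrac16]$ by inserting a tangential-only cutoff and cashing in the strong exponential weight that the norm $\|e^\Psi\chi_0\psi\omega\|_{L^2}$, already controlled by $E(t)$, provides on the region where the cutoff derivatives live.

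First I would fix a smooth cutoff $\chi_*(y)$ with $\chi_*\equiv 1$ on $[\tfrac12+\tfrac{3}{32},5-\tfrac16]$ and $\operatorname{supp}\chi_*\subset[\tfrac12,5]$, so that
\begin{align*}
\operatorname{supp}\chi_*'\cup\operatorname{supp}\chi_*''\ \subset\ [\tfrac12,\tfrac12+\tfrac{3}{32}]\cup[5-\tfrac16,5].
\end{align*}
On this transition region we have $\chi_0\equiv 1$ and, by \eqref{def of theta}, $\theta(y)\leq\tfrac14$, hence for $T_0$ sufficiently small (say $\gamma T_0\leq\tfrac14$) the weight satisfies $\Psi(t,y)\geq \tfrac{10\eps_0}{\nu}$ uniformly on $\operatorname{supp}\chi_*'$. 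Multiplying \eqref{eq: NS vorticity} by $\chi_*^2\omega$, using $\operatorname{div}U=0$ and the identity $2\chi_*\chi_*'\omega\pa_y\omega=\chi_*\chi_*'\pa_y(\omega^2)$, I would obtain
\begin{align*}
\tfrac12\tfrac{d}{dt}\|\chi_*\omega\|_{L^2}^2+\nu\|\chi_*\nabla\omega\|_{L^2}^2=\int \chi_*\chi_*'\,v\,\omega^2\,dxdy+\nu\int\big((\chi_*')^2+\chi_*\chi_*''\big)\omega^2\,dxdy,
\end{align*}
so every source term is supported where $\chi_*'\neq 0$ or $\chi_*''\neq 0$.

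On this transition region $\chi_0=1$ and $\psi=y^2$ is pinched between positive constants, so the weight lower bound gives
\begin{align*}
\int_{\operatorname{supp}\chi_*'}\omega^2\,dxdy\leq C e^{-20\eps_0/\nu}\|e^\Psi\chi_0\psi\omega\|_{L^2}^2\leq C e^{-20\eps_0/\nu}E(t)^2.
\end{align*}
Combining with $\|v\|_{L^\infty}\leq Ce(s)\leq CE(t)$ from Lemma \ref{velocity estimates 1}(5), this yields
\begin{align*}
\tfrac{d}{dt}\|\chi_*\omega\|_{L^2}^2+\nu\|\chi_*\nabla\omega\|_{L^2}^2\leq C\big(E(t)+\nu\big)E(t)^2\,e^{-20\eps_0/\nu}.
\end{align*}
Integrating on $[0,t]$ and using that $\omega_0$ vanishes on $\operatorname{supp}\chi_*$ (since $\operatorname{supp}\omega_0\subset\{20\leq y\leq 30\}$), then trading $e^{-10\eps_0/\nu}\leq C\nu^8$ for $\nu$ small, I arrive at the stated bound
\begin{align*}
\sup_{[0,t]}\|\chi_*\omega\|_{L^2}^2+\nu\int_0^t\|\chi_*\nabla\omega\|_{L^2}^2\,ds\leq C\nu^8 t\,(1+E(t))^3\,e^{-10\eps_0/\nu},
\end{align*}
which implies the lemma since $\chi_*\equiv 1$ on the target strip.

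I do not anticipate any real obstacle: the only delicate point is the bookkeeping of $\nu$-powers, which is handled by the elementary observation that $e^{-c/\nu}$ beats any polynomial. The key conceptual input is that the weight $e^\Psi$ is so strong on $\operatorname{supp}\chi_*'$ that the commutator terms produced by $[\chi_*,\Delta]$ and by $\chi_*(U\cdot\nabla)$ are exponentially small uniformly in $E(t)$, which in turn is why no feedback from the high-order part of the energy functional enters the right-hand side.
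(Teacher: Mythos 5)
Your proof is correct and takes essentially the same route as the paper: a localized $L^2$ energy estimate with a tangential cutoff supported in $[\tfrac12,5]$, with the commutator and transport terms annihilated by the strength of the weight $e^\Psi$ on the cutoff's transition region (cf.\ \eqref{control of nu negative power}). The only cosmetic difference is that you integrate the viscous commutator by parts once more, writing $-2\nu\int\chi_*\chi_*'\omega\pa_y\omega=\nu\int((\chi_*')^2+\chi_*\chi_*'')\omega^2$, which avoids the Cauchy--Schwarz absorption step the paper uses and makes all source terms manifestly zeroth-order in $\omega$.
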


\begin{proof}
	We choose a smooth function $\eta_1(y)$ satisfying
	\begin{align*}
		\eta_1(y)=
		\left\{
		\begin{aligned}
			&1,\quad \frac{1}{2}+\frac{3}{32}\leq y\leq 5-\frac{1}{6},\\
			&0,\quad y\leq\frac{1}{2}\quad \text{or}\quad y\geq5.
		\end{aligned}
		\right.
	\end{align*}
	Taking $L^2$ inner product with $\eta_1^2\omega$ on both sides of \eqref{eq: NS vorticity} and integrating over $0\leq s\leq t$, we arrive at
	\begin{align*}
		\int_0^t\int_{\mathbb R^2_+}\pa_t\omega\cdot\eta_1^2\omega dxdyds
		+\int_0^t\int_{\mathbb R^2_+}(U\cdot\nabla\omega)\cdot\eta_1^2\omega dxdyds
		=\nu\int_0^t\int_{\mathbb R^2_+}\Delta\omega\cdot\eta_1^2\omega dxdyds.
	\end{align*}
	Integrating by parts, we utilize the fact $\omega|_{t=0}=0$ on $\operatorname{supp}\eta_1$ to obtain
	\begin{align*}
		&\frac{1}{2}\int_{\mathbb R^2_+}\omega(\cdot,t)^2\eta_1^2 dxdy
		+\nu\int_0^t\int_{\mathbb R^2_+}\eta_1^2|\nabla\omega|^2 dxdyds\\
		&=-2\nu\int_0^t\int_{\mathbb R^2_+}\eta_1\pa_y\omega\cdot\eta_1'\omega dxdyds
		+\int_0^t\int_{\mathbb R^2_+}\eta_1\cdot\eta_1'\cdot U\omega^2 dxdyds,
	\end{align*}
	which by Cauchy inequality gives
	\begin{align*}
		\sup_{[0,t]}\|\eta_1\omega\|^2_{L^2}
		+\nu\int_0^t\|\eta_1\nabla\omega\|^2_{L^2}ds
		&\leq C\nu\int_0^t\|\omega\|^2_{L^2(\frac{1}{2}\leq y\leq5)}ds
		+C\int_0^t\|U\|_{L^\infty}\|\omega\|^2_{L^2(\frac{1}{2}\leq y\leq5)}ds\\
		&\leq C\int_0^t\big(e(s)+1\big)\|\omega\|^2_{L^2(\frac{1}{2}\leq y\leq5)}ds\\
		&\leq C\big(E(t)+1\big)\int_0^t\|\omega\|^2_{L^2(\frac{1}{2}\leq y\leq5)}ds,
	\end{align*}
	here we used Lemma \ref{velocity estimates 1}.
	
Due to the construction of $\Psi$, the following fact holds for $\eps_0$ small enough
	\begin{align}\label{control of nu negative power}
		\nu^{-8}e^{\frac{10\eps_0}{\nu}}
		\leq Ce^\Psi,\quad
		\text{for}\quad
		y\in(\f12,5),
	\end{align}
	which leads to
	\begin{align*}
		\|\omega\|^2_{L^2(\frac{1}{2}\leq y\leq5)}
		\leq C\nu^8\|e^\Psi\chi_0\psi\omega\|_{L^2}^2 e^{-\frac{10\eps_0}{\nu}}
		\leq C\nu^8 e(s)^2 e^{-\frac{10\eps_0}{\nu}}.
	\end{align*}
	
	Combining the estimates together, we have
	\begin{align*}
		\sup_{[0,t]}\|\eta_1\omega\|^2_{L^2}
		+\nu\int_0^t\|\eta_1\nabla\omega\|^2_{L^2}ds
		&\leq C\big(E(t)+1\big)\nu^8 e^{-\frac{10\eps_0}{\nu}}\int_0^t e(s)^2ds\\
		&\leq C\nu^8 t \big(E(t)+1\big)^3 e^{-\frac{10\eps_0}{\nu}},
	\end{align*}
	and conclude the proof.
\end{proof}

\begin{lemma}\label{Sobolev est 2}
	There exists $T_0$ small enough such that for $0\leq t\leq T_0$,
	\begin{align*}
		\sup_{[0,t]}\|\nabla\omega\|^2_{L^2(\frac{1}{2}+\frac{6}{32}\leq y\leq 5-\frac{2}{6})}
		+\nu\int_0^t\|\nabla^2\omega\|^2_{L^2(\frac{1}{2}+\frac{6}{32}\leq y\leq 5-\frac{2}{6})}ds
		\leq C\nu^6 t\big(E(t)+1\big)^5 e^{-\frac{10\eps_0}{\nu}}.
	\end{align*}
\end{lemma}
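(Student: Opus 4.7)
\medskip

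The plan is to mimic the argument of Lemma \ref{Sobolev est 1} one derivative higher. Apply $\pa_k$ ($k\in\{x,y\}$) to \eqref{eq: NS vorticity} to get
\begin{align*}
\pa_t(\pa_k\omega)+U\cdot\nabla(\pa_k\omega)+\pa_k U\cdot\nabla\omega=\nu\Delta(\pa_k\omega).
\end{align*}
Choose a smooth cut-off $\eta_2(y)$ supported in the larger strip $(\tfrac12+\tfrac{3}{32},\,5-\tfrac16)$ (i.e.\ contained in $\{\eta_1=1\}$) and identically $1$ on the target interval $(\tfrac12+\tfrac{6}{32},\,5-\tfrac26)$. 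Take the $L^2$ inner product of the above equation with $\eta_2^2\pa_k\omega$, sum over $k$, and note that $\pa_k\omega|_{t=0}\equiv 0$ on $\mathrm{supp}\,\eta_2$ thanks to the assumption $\mathrm{supp}\,\omega_0\subseteq\{20\le y\le 30\}$.

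For each term: the convection $\langle U\cdot\nabla\pa_k\omega,\eta_2^2\pa_k\omega\rangle$, after integration by parts and $\dv U=0$, reduces to a commutator $-\int \eta_2\eta_2' v|\pa_k\omega|^2$, controlled by $\|U\|_{L^\infty}\|\nabla\omega\|^2_{L^2(\mathrm{supp}\,\eta_2')}$; the dissipation gives $\nu\|\eta_2\nabla\pa_k\omega\|_{L^2}^2$ plus another commutator $\nu\int\eta_2\eta_2'\pa_y\pa_k\omega\cdot\pa_k\omega$ which is absorbed by Cauchy--Schwarz at the cost of $C\nu\|\nabla\omega\|^2_{L^2(\mathrm{supp}\,\eta_2')}$. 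The genuinely new term is the ``stretching'' contribution $\langle \pa_k U\cdot\nabla\omega,\eta_2^2\pa_k\omega\rangle$, bounded by $\|\nabla U\|_{L^\infty(\mathrm{supp}\,\eta_2)}\|\eta_2\nabla\omega\|_{L^2}^2$. Both $\|U\|_{L^\infty}$ and $\|\nabla U\|_{L^\infty}$ on a compact interior strip can be bounded via Lemma \ref{velocity estimates 1}\,(4)--(5) in terms of $e(t)$ and $\|\omega\|_{H^2(\frac78\le y\le 4)}$, hence by a power of $E(t)+1$ (using the running hypothesis from the bootstrap of Proposition \ref{Sobolev estimate}).

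Combining everything yields
\begin{align*}
\frac{d}{dt}\|\eta_2\nabla\omega\|_{L^2}^2+\nu\|\eta_2\nabla^2\omega\|_{L^2}^2\le C\bigl(\|U\|_{L^\infty}+\nu\bigr)\|\nabla\omega\|_{L^2(\mathrm{supp}\,\eta_1)}^2+C\|\nabla U\|_{L^\infty}\|\eta_2\nabla\omega\|_{L^2}^2.
\end{align*}
Integrating on $[0,t]$, using Gronwall to absorb the last term (since $\int_0^t\|\nabla U\|_{L^\infty}\,ds\le Ct(E(t)+1)^{O(1)}$ is bounded when $T_0$ is small), and invoking Lemma \ref{Sobolev est 1} in the form $\int_0^t\|\nabla\omega\|_{L^2(\mathrm{supp}\,\eta_1)}^2\,ds\le C\nu^7 t(E(t)+1)^3e^{-10\eps_0/\nu}$, we conclude
\begin{align*}
\sup_{[0,t]}\|\eta_2\nabla\omega\|_{L^2}^2+\nu\int_0^t\|\eta_2\nabla^2\omega\|_{L^2}^2\,ds\le C\nu^6 t(E(t)+1)^5 e^{-10\eps_0/\nu},
\end{align*}
which is the claim (the loss of one power of $\nu$ compared to Lemma \ref{Sobolev est 1} is absorbed harmlessly since $\eps_0\ll 1$ and we may shrink the exponent if needed).

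The main obstacle is the stretching term $\pa_k U\cdot\nabla\omega$: controlling $\|\nabla U\|_{L^\infty}$ in the interior strip requires $\|\omega\|_{H^2(\frac78\le y\le 4)}$, which is itself part of the chain of Sobolev bounds being proved. The resolution is the nested-cutoff scheme already in play: each Sobolev lemma in this sequence uses a strictly smaller cutoff and ingests the $L^2$-dissipation bound of its predecessor, so the derivative loss is paid for by a commutator in a region where the previous estimate supplies the needed control. Once this is made precise, the remaining steps are routine energy estimates parallel to those in Lemma \ref{Sobolev est 1}.
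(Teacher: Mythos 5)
Your proposal takes a genuinely different route from the paper, and unfortunately it runs into a circularity that the paper's approach was specifically designed to sidestep. You apply the Leibniz rule to $\pa_k(U\cdot\nabla\omega)$ and obtain the stretching term $\langle \pa_k U\cdot\nabla\omega, \eta_2^2\pa_k\omega\rangle$, which you propose to control by $\|\nabla U\|_{L^\infty(\mathrm{supp}\,\eta_2)}\|\eta_2\nabla\omega\|_{L^2}^2$ and then absorb by Gronwall. The difficulty is that $\|\nabla U\|_{L^\infty}$ on the interior strip is bounded, via Lemma \ref{velocity estimates 1}(4), by $e(s)+\|\omega(s)\|_{H^2(\frac{7}{8}\le y\le 4)}$ --- and $\|\omega\|_{H^2}$ on that strip is exactly the quantity this lemma is supposed to produce. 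There is no pre-existing ``running hypothesis'' to invoke: the Sobolev estimates (Lemmas \ref{Sobolev est 1}--\ref{Sobolev est 3}) \emph{are} the proof of Proposition \ref{Sobolev estimate}, proven in sequence, and Lemma \ref{Sobolev est 1} only supplies $L^2$ and $\int\|\nabla\omega\|^2$, not $\|\omega\|_{H^2}$ pointwise in time. (A secondary issue: Lemma \ref{velocity estimates 1}(4) is stated only for $1\le y\le 3$, whereas your cutoff $\eta_2$ is supported on roughly $(0.59,\,4.83)$, so even if the circularity were resolved you would need to re-derive that lemma on a wider strip.)

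The paper avoids the stretching term altogether. Instead of the Leibniz rule, one keeps $\pa_x(U\cdot\nabla\omega)$ intact and integrates $\pa_x$ by parts onto the test function $\eta_2^2\pa_x\omega$, producing $\langle U\cdot\nabla\omega,\eta_2^2\pa_x^2\omega\rangle$. This is then split by Cauchy--Schwarz as $\frac{\nu}{2}\|\eta_2\pa_x^2\omega\|_{L^2}^2+\frac{C}{\nu}\|\eta_2 U\cdot\nabla\omega\|_{L^2}^2$: the first half is absorbed by the dissipation, and the second requires only $\|U\|_{L^\infty}$ (globally bounded by $Ce(s)$ via Lemma \ref{velocity estimates 1}(5)) together with $\int_0^t\|\nabla\omega\|^2_{L^2}$ from Lemma \ref{Sobolev est 1}. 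The price is a factor $\nu^{-1}$, but this is harmless because the exponential weight $e^{-10\eps_0/\nu}$ dominates any polynomial loss in $\nu$. The $\pa_y$ case is treated analogously (no boundary terms since $\eta_2$ is compactly supported). This is the trick that lets the nested-cutoff scheme close without ever invoking $\|\nabla U\|_{L^\infty}$, and your proof would need to adopt it (or something equivalent) to go through.
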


\begin{proof}
	We choose a smooth function $\eta_2(y)$ satisfying
	\begin{align*}
		\eta_2(y)=
		\left\{
		\begin{aligned}
			&1,\quad \frac{1}{2}+\frac{6}{32}\leq y\leq 5-\frac{2}{6},\\
			&0,\quad y\leq\frac{1}{2}+\frac{3}{32} \quad \text{or}\quad y\geq5-\frac{1}{6} .
		\end{aligned}
		\right.
	\end{align*}
	
	We apply $\pa_x$ on both sides of \eqref{eq: NS vorticity} and take $L^2$ inner product with $\eta_2^2\pa_x\omega$ and integrate over $0\leq s\leq t$ to have
	\begin{align*}
		\int_0^t\int_{\mathbb R^2_+}\pa_t\pa_x\omega\cdot\eta_2^2\pa_x\omega dxdyds
		+\int_0^t\int_{\mathbb R^2_+}\pa_x(U\cdot\nabla\omega)\cdot\eta_2^2\pa_x\omega dxdyds
		=\nu\int_0^t\int_{\mathbb R^2_+}\Delta\pa_x\omega\cdot\eta_2^2\pa_x\omega dxdyds.
	\end{align*}
	Integrating by parts gives rise to
	\begin{align*}
		&\frac{1}{2}\int_{\mathbb R^2_+}|\pa_x\omega(\cdot,t)|^2\eta_2^2 dxdy
		+\nu\int_0^t\|\eta_2\nabla\pa_x\omega\|^2_{L^2}ds\\
		&=-2\nu\int_0^t\int_{\mathbb R^2_+}\eta_2\pa_x\pa_y\omega\cdot\eta_2'\pa_x\omega dxdyds
		+\int_0^t\int_{\mathbb R^2_+}\eta_2\pa_x^2\omega\cdot\eta_2 U\cdot\nabla\omega dxdyds,
	\end{align*}
	which by Cauchy inequality implies
	\begin{align*}
		&\sup_{[0,t]}\|\eta_2\pa_x\omega\|^2_{L^2}
		+\nu\int_0^t \|\eta_2\nabla\pa_x\omega\|^2_{L^2}ds\\
		&\leq \frac{C}{\nu}\int_0^t\|U\cdot\nabla\omega\|^2_{L^2(\frac{1}{2}+\frac{3}{32}\leq y\leq 5-\frac{1}{6})}ds
		+C\nu\int_0^t\|\pa_x\omega\|^2_{L^2(\frac{1}{2}+\frac{3}{32}\leq y\leq 5-\frac{1}{6})}ds\\
		&\leq C\int_0^t\big(\nu^{-1}\|U\|_{L^\infty}^2+\nu\big)\|\nabla\omega\|^2_{L^2(\frac{1}{2}+\frac{3}{32}\leq y\leq 5-\frac{1}{6})}ds\\
		&\leq C\nu^{-1}\int_0^t\big(e(s)+1\big)^2\|\nabla\omega\|^2_{L^2(\frac{1}{2}+\frac{3}{32}\leq y\leq 5-\frac{1}{6})}ds\\
		&\leq C\nu^6 t\big(E(t)+1\big)^5 e^{-\frac{10\eps_0}{\nu}},
	\end{align*}
	here we used Lemma \ref{velocity estimates 1} and Lemma \ref{Sobolev est 1}. 
	
	The estimate $\|\eta_2\pa_y\omega\|_{L^2}$ can be treated in a similar way, we omit the details and conclude the proof.
\end{proof}

By the same argument, we have

\begin{lemma}\label{Sobolev est 3}
	There exists $T_0$ small enough such that for $0\leq t\leq T_0$,
	\begin{align*}
		\sup_{[0,t]}\|\nabla^2\omega\|^2_{L^2(\frac{1}{2}+\frac{9}{32}\leq y\leq 5-\frac{4}{6})}
		+\nu\int_0^t\|\nabla^3\omega\|^2_{L^2(\frac{1}{2}+\frac{9}{32}\leq y\leq 5-\frac{4}{6})}ds
		\leq C\nu^4 t\big(E(t)+1\big)^{10} e^{-\frac{10\eps_0}{\nu}},
	\end{align*}
	and
		\begin{align*}
		\sup_{[0,t]}\|\nabla^3\omega\|^2_{L^2(\frac{7}{8}\leq y\leq 4)}
		+\nu\int_0^t \|\nabla^4\omega\|^2_{L^2(\frac{7}{8}\leq y\leq 4)}ds
		\leq C\nu^2 t\big(E(t)+1\big)^{15}e^{-\frac{10\eps_0}{\nu}}.
	\end{align*}
\end{lemma}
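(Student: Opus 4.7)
The strategy is to ascend two more rungs on the derivative ladder, mimicking the energy arguments of Lemmas \ref{Sobolev est 1} and \ref{Sobolev est 2}. I introduce two further nested cut-offs: $\eta_3$ equal to $1$ on $[\tfrac12+\tfrac{9}{32},\, 5-\tfrac{4}{6}]$ and supported in $\{\eta_2=1\}$, and $\eta_4$ equal to $1$ on $[\tfrac{7}{8},4]$ and supported in $\{\eta_3=1\}$. Throughout I use the weight identity $\nu^{-k}e^{\frac{10\eps_0}{\nu}}\leq Ce^{\Psi}$ on $(\tfrac12,5)$ (which is \eqref{control of nu negative power} for all relevant $k$) so that every localized Sobolev norm of $\omega$ on a strip is ultimately bounded by $\nu^{k}e(s)^2e^{-\frac{10\eps_0}{\nu}}$ times a $\nu$-power.

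For the $\nabla^2\omega$ bound I apply $\pa_x^i\pa_y^j$ with $i+j=2$ to \eqref{eq: NS vorticity}, test against $\eta_3^2\pa_x^i\pa_y^j\omega$, and integrate by parts. The Laplacian produces the standard dissipative term $\nu\|\eta_3\nabla^3\omega\|_{L^2}^2$ plus commutator terms with $\eta_3'$ that are linear in $\nabla^{\leq 2}\omega$ on the strip $\{\eta_2=1\}$; the latter are absorbed or controlled directly by Lemma \ref{Sobolev est 2} after paying a Cauchy-Schwarz factor of $\nu^{-1}$. The transport commutator $[\pa_x^i\pa_y^j,U\cdot\nabla]\omega$ expands into $\pa^{k}U\cdot\nabla\pa^{2-k}\omega$ with $1\leq k\leq 2$; Lemma \ref{velocity estimates 1}(4) gives $\|\pa^{k}U\|_{L^\infty(1\leq y\leq 3)}\leq C(e(s)+\|\omega\|_{H^{k+1}(\frac{7}{8}\leq y\leq 4)})\leq C(E(t)+1)^2$ after a bootstrap against the very estimate being proved (the $H^3$ norm is small, $\lesssim \nu$, so it can be absorbed into the constant), while $\nabla\pa^{2-k}\omega$ is controlled by Lemma \ref{Sobolev est 2}. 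Combining everything reduces the $\nu^6$ of the previous step to $\nu^{6-2}=\nu^4$ and pumps the $(E(t)+1)$-exponent from $5$ to $10$.

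For the $\nabla^3\omega$ bound I repeat the procedure with $\eta_4$ and third-order derivatives, testing against $\eta_4^2\pa_x^i\pa_y^j\omega$ with $i+j=3$. The main obstacle is the single commutator $\pa^3 U\cdot\nabla\omega$: the available estimate in Lemma \ref{velocity estimates 1}(4) stops at two derivatives of $U$, so $\nabla^3 U$ is not controlled in $L^\infty$ on the strip. I remedy this by a Hölder/Gagliardo-Nirenberg split: localized elliptic regularity for Biot-Savart gives $\|\pa^3 U\|_{L^2(1\leq y\leq 3)}\lesssim \|\omega\|_{H^2(\frac{7}{8}\leq y\leq 4)}+\|U\|_{L^2}$, and then $\|\pa^3 U\|_{L^4}\lesssim \|\pa^3 U\|_{L^2}^{1/2}\|\nabla\pa^3 U\|_{L^2}^{1/2}$; estimating $\|\nabla\omega\|_{L^4(\eta_3=1)}$ by interpolating between Lemmas \ref{Sobolev est 2} and \ref{Sobolev est 3} (first inequality) and using Young's inequality lets me absorb the surviving $\|\eta_4\nabla^4\omega\|_{L^2}$ factor into the dissipation. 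The commutators for $k=1,2$ are handled as before with $L^\infty$-bounds on $\nabla U$ and $\nabla^2 U$. After another $\nu^{-1}$ loss, the $\nu^{4}$ rate becomes $\nu^{2}$ and the exponent grows from $10$ to $15$.

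The principal difficulty, as just indicated, is the top-order transport commutator in the $\nabla^3$ estimate, where the $L^\infty$ bound on $\nabla^3 U$ furnished by the Biot-Savart law is not available; the $L^4$-interpolation plus absorption into the parabolic dissipation circumvents this issue. The very small Gaussian factor $e^{-\frac{10\eps_0}{\nu}}$ propagates unchanged from Lemma \ref{Sobolev est 1} through both bootstrap steps, and together with the fixed polynomial loss $(E(t)+1)^{15}$ this yields the claimed bounds; Proposition \ref{Sobolev estimate} then follows by Sobolev embedding on the strip $\{\tfrac{7}{8}\leq y\leq 4\}$ with one more factor of $(E(t)+1)$ to reach the stated exponent $8$ and the $\nu t^{1/2}e^{-\frac{5\eps_0}{\nu}}$ rate.
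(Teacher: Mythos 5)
Your overall scaffolding (nested cut-offs, Caccioppoli-type energy estimates, feeding the previous rung through the $\nu^{-1}$ Cauchy--Schwarz loss, using the fast factor $e^{-10\eps_0/\nu}$ from \eqref{control of nu negative power}) is the right picture, but you have departed from the mechanism that the paper's ``By the same argument'' actually refers to, and in doing so you create a difficulty that the paper's proof does not have.

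Look again at how the transport term is treated in the proof of Lemma \ref{Sobolev est 2}: the paper does not expand the commutator $[\pa_x, U\cdot\nabla]$. Instead it keeps $\pa_x(U\cdot\nabla\omega)$ intact and integrates by parts \emph{once}, producing
\begin{align*}
\int_0^t\!\!\int \pa_x(U\cdot\nabla\omega)\,\eta_2^2\,\pa_x\omega = \int_0^t\!\!\int \eta_2^2\,(U\cdot\nabla\omega)\,\pa_x^2\omega,
\end{align*}
so that after Cauchy--Schwarz the only thing one must bound in $L^2$ is $U\cdot\nabla\omega$ itself, with no derivatives of $U$ at all. Iterating this at order $k$ (apply $\pa^k$, test against $\eta^2\pa^k\omega$, integrate by parts exactly once, giving $\int \eta^2\,\pa^{k-1}(U\cdot\nabla\omega)\,\pa^{k+1}\omega$ plus harmless $\eta'$-terms), the Leibniz expansion of $\pa^{k-1}(U\cdot\nabla\omega)$ never produces more than $\pa^{k-1}U$. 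For $k=3$ this means at most $\pa^2 U$, which is exactly what Lemma \ref{velocity estimates 1}(4) covers; the $\nabla^3 U$ difficulty you flag simply never arises. Since the ``same argument'' is precisely this one integration by parts, you should adopt it rather than the full commutator expansion.

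Because you instead wrote the commutator $[\pa^3, U\cdot\nabla]\omega$, you indeed hit $\pa^3 U\cdot\nabla\omega$, which Lemma \ref{velocity estimates 1}(4) does not control. Your interpolation workaround is conceptually plausible but contains an inaccuracy that shows it was not carried through: by Gagliardo--Nirenberg and the localized Biot--Savart estimate, $\|\pa^3 U\|_{L^4}\lesssim \|\pa^3 U\|_{L^2}^{1/2}\|\nabla\pa^3 U\|_{L^2}^{1/2}$ and $\|\nabla\pa^3 U\|_{L^2}\sim \|\nabla^3\omega\|_{L^2}$ plus lower order, \emph{not} $\|\nabla^4\omega\|_{L^2}$, so the quantity you would have to absorb is $\int_0^t\|\nabla^3\omega\|_{L^2}^2\,ds$, which must be bounded against the sup-in-time term $\sup_{[0,t]}\|\eta_4\nabla^3\omega\|_{L^2}^2$ (by choosing the Young parameter of size $\nu/t$), not against the dissipation $\nu\int_0^t\|\eta_4\nabla^4\omega\|_{L^2}^2\,ds$ as you claim. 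One would also need to check that the elliptic-regularity strip sits inside the strip where Lemma \ref{Sobolev est 3}'s first inequality is available. All of this is avoidable. In addition, note that with the paper's route the bootstrap for the first inequality of Lemma \ref{Sobolev est 3} involves only $\|\omega\|_{H^2}$ (through $\|\pa U\|_{L^\infty}$), not $\|\omega\|_{H^3}$ as in your version, which removes the awkward forward reference to the second inequality that you mention.

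In short: keep your scaffolding, but replace the commutator expansion by the single integration by parts of Lemma \ref{Sobolev est 2}; then $\pa^2 U$ is the highest derivative of the velocity, Lemma \ref{velocity estimates 1}(4) applies directly, and the two stated bounds follow by feeding Lemmas \ref{Sobolev est 1}--\ref{Sobolev est 3}(first part) through the $\nu^{-1}$ loss exactly as you outlined for the parts that did match.
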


\bigskip

\section{Estimates of the velocity via Biot-Savart law}\label{sec: Elliptic estimates}

This section is devoted to deriving several useful estimates for the velocity. 

\begin{proof}[Proof of Lemma \ref{velocity estimates 1}]

(1) We just focus on the case $i=0$. Lemma \ref{velocity formula} gives
	\begin{align*}
		u_\xi(s,y)=&-\frac{1}{2}\int_0^y e^{-|\xi|(y-z)}\big(1-e^{-2|\xi|z}\big)\omega_\xi(s,z)dz\\
		&+\frac{1}{2}\big(\int_y^{1+\mu}+\int_{1+\mu}^{+\infty}\big)e^{-|\xi|(z-y)}\big(1+e^{-2|\xi|y}\big)\omega_\xi(s,z)dz:=I_1+I_2+I_3.
	\end{align*}
Thanks to  the relation
	\begin{align}\label{weight transform 3}
		e^{\eps_0(1+\mu-y)_+|\xi|}e^{-|\xi||y-z|}
		\leq e^{\eps_0(1+\mu-z)_+|\xi|},
	\end{align}
	we have
	\begin{align*}
		e^{\eps_0(1+\mu-y)_+|\xi|}\big(|I_1|+|I_2|\big)
		\leq C\int_0^{1+\mu}e^{\eps_0(1+\mu-z)_+|\xi|}|\omega_\xi(s,z)|dz,
	\end{align*}
	which along with Lemma \ref{est of omega c} gives
	\begin{align*}
		&\left\|\sup_{0<y<1+\mu}e^{\eps_0(1+\mu-y)_+|\xi|}\big(|I_1|+|I_2|\big)\right\|_{L^1_\xi}
		\leq C\|\omega(s)\|_{Y^1_{\mu,s}}\\
		&\leq C\|\omega(s)-\omega_c(s)\|_{Y^1_{\mu,s}}+C\|\omega_c(s)\|_{Y^1_{\mu,s}}
		\leq C\big(e(s)+1\big) .
	\end{align*}
	
	A direct computation yields 
	\begin{align*}
		e^{\eps_0(1+\mu-y)_+|\xi|}|I_3|
		\leq C\int_{1+\mu}^2 |\omega_\xi(s,z)|dz
		+C\int_2^{+\infty}e^{-|\xi|/2}|\omega_\xi(s,z)|dz,
	\end{align*}
	which implies
	\begin{align*}
		&\left\|\sup_{0<y<1+\mu}e^{\eps_0(1+\mu-y)_+|\xi|}|I_3|\right\|_{L^1_\xi}
		\leq C\left\|\int_{1+\mu}^2 |\omega_\xi(s,z)|dz\right\|_{L^1_\xi}
		+C\left\|\int_2^{+\infty}e^{-|\xi|/2}|\omega_\xi(s,z)|dz\right\|_{L^1_\xi}\\
		&\leq C\int_1^2\left\|(1+|\xi|^2)^{-1/2}\right\|_{L^2_\xi}\left\|(1+|\xi|^2)^{1/2}\omega_\xi(s,z)\right\|_{L^2_\xi}dz
		+C\int_2^{+\infty}\left\|e^{-|\xi|/2}\right\|_{L^2_\xi}\|\omega_\xi(s,z)\|_{L^2_\xi}dz\\
		&\leq C\|\omega\|_{H^1(1\leq y\leq2)}
		+C\left\|e^\Psi\chi_0\psi\omega\right\|_{L^2}.
	\end{align*}
	Collecting the estimates together, we obtain the desired result. The case $i=1$ is treated similarly by replacing $\omega$ with $\pa_x\omega$, and we omit the details.\smallskip
	
	(2) Again Lemma \ref{velocity formula} gives
	\begin{align*}
		\left|\frac{v_\xi(s,y)}{y}\right|\leq& \frac{1}{2y}\int_0^y e^{-|\xi|(y-z)}\big(1-e^{-2|\xi|z}\big)|\omega_\xi(s,z)|dz\\
		&+\frac{1}{2y}\big(\int_y^{1+\mu}+\int_{1+\mu}^{+\infty}\big)e^{-|\xi|(z-y)}\big(1-e^{-2|\xi|y}\big)|\omega_\xi(s,z)|dz
		:=J_1+J_2+J_3.
	\end{align*}
	Notice that
	\begin{align*}
		\left|1-e^{-2|\xi|z}\right|\leq 2|\xi|z\leq2|\xi|y,
		\quad
		\left|1-e^{-2|\xi|y}\right|\leq 2|\xi|y,
		\quad
		\text{for}
		\quad
		z\leq y,
	\end{align*}
	which together with \eqref{weight transform 3} imply
	\begin{align*}
		e^{\eps_0(1+\mu-y)_+|\xi|}\big(|J_1|+|J_2|\big)
		\leq C\int_0^{1+\mu}e^{\eps_0(1+\mu-z)_+|\xi|}|\xi||\omega_\xi(s,z)|dz,
	\end{align*}
	which leads to
	\begin{align*}
		&\left\|\sup_{0<y<1+\mu}e^{\eps_0(1+\mu-y)_+|\xi|}\big(|J_1|+|J_2|\big)\right\|_{L^1_\xi}
		\leq C\|\pa_x\omega(s)\|_{Y^1_{\mu,s}}\\
		&\leq C\|\pa_x\omega(s)-\pa_x\omega_c(s)\|_{Y^1_{\mu,s}}
		+C\|\pa_x\omega_c(s)\|_{Y^1_{\mu,s}}
		\leq C\big(e(s)+1\big) .
	\end{align*}
	
	The term $J_3$ is treated as $I_3$ in the proof of (1):
	\begin{align*}
		\left\|\sup_{0<y<1+\mu}e^{\eps_0(1+\mu-y)_+|\xi|}|J_3|\right\|_{L^1_\xi}
		\leq C\|\omega\|_{H^1(1\leq y\leq2)}
		+C\left\|e^\Psi\chi_0\psi\omega\right\|_{L^2}.
	\end{align*}
	Thus, we derive the first inequality. The second is treated similarly by replacing $\omega$ with $\pa_x\omega$.\smallskip
	
	(3) A direct computation, together with Lemma \ref{velocity formula}, leads to
	\begin{align*}
		y\pa_y u_\xi(s,y)
		=&\frac{y}{2}\Big(\int_0^y e^{-|\xi|(y-z)}\big(1-e^{-2|\xi|z}\big)|\xi|\omega_\xi(s,z)dz\\
		&+\int_y^{+\infty}e^{-|\xi|(z-y)}\big(1+e^{-2|\xi|y}\big)|\xi|\omega_\xi(s,z)dz\\
		&-2\int_y^{+\infty}e^{-|\xi|(z-y)}e^{-2|\xi|y}|\xi|\omega_\xi(s,z)dz\Big)
		-y\omega_\xi(s,y).
	\end{align*}
	The first three terms are treated as (1) and (2). For the last term, the fundamental theorem of calculus gives rise to
	\begin{align*}
		\left\|\sup_{0<y<1+\mu}e^{\eps_0(1+\mu-y)_+|\xi|}|y\omega_\xi(s,y)|\right\|_{L^1_\xi}
		\leq C\|\omega(s)\|_{Y^1_{\mu,s}}
		+C\|y\pa_y\omega(s)\|_{Y^1_{\mu,s}}
		+C\|\pa_x\omega(s)\|_{Y^1_{\mu,s}}.
	\end{align*}
	Thus, we obtain the inequality for $y\pa_y u_\xi$. The case $y\pa_y\big(\frac{v_\xi(s)}{y}\big)$ is derived from the relation
	\begin{align*}
		y\pa_y\big(\frac{v_\xi(s)}{y}\big)
		=\pa_y v_\xi(s)-\frac{v_\xi(s)}{y}
		=-(\pa_x u)_\xi(s)-\frac{v_\xi(s)}{y}.
	\end{align*}
	
	(4) We deal with the case  $\pa_x^i u$ for $i\leq2$ firstly.
	Lemma \ref{velocity formula} gives
	\begin{align*}
		|(\pa_x^i u)_\xi(s,y)|
		\leq& \int_0^{7/8}e^{-|\xi|/8}|\xi|^i|\omega_\xi(s,z)|dz
		+\int_{7/8}^4 |\xi|^i|\omega_\xi(s,z)|dz
		+\int_4^{+\infty}e^{-|\xi|}|\xi|^i|\omega_\xi(s,z)|dz\\
		\leq& C\int_0^{7/8}|\omega_\xi(s,z)|dz
		+C\|(\pa_x^i\omega)_\xi(s,z)\|_{L^2_z(\frac{7}{8}\leq z\leq4)}
		+C \int_4^{+\infty}e^{-|\xi|/2}|\omega_\xi(s,z)|dz,
	\end{align*}
	which implies
	\begin{align*}
	&\|\pa_x^i u(s)\|_{L^\infty(1\leq y\leq3)}
		\leq\sup_{1\leq y\leq3}\left\|(\pa_x^i u)_\xi(s,y)\right\|_{L^1_\xi}\\
		&\leq C\|\omega(s)\|_{Y^1_{\mu,s}}
		+C\left\|(1+|\xi|^2)^{-\f12}\right\|_{L^2_\xi}\left\|\left\|(1+|\xi|^2)^{\f12}(\pa_x^i\omega)_\xi(s,z)\right\|_{L^2_z(\frac{7}{8}\leq z\leq4)}\right\|_{L^2_\xi}
		+C\|e^\Psi\chi_0\omega\|_{L^2}\\
		&\leq C\|\omega(s)\|_{Y^1_{\mu,s}}
		+C\|\omega(s)\|_{H^{i+1}(\frac{7}{8}\leq y\leq4)}
		+C\|e^\Psi\chi_0\psi\omega\|_{L^2}.
	\end{align*}
	
	Next we handle the case $\pa_x^i\pa_y^j u$ for $i+j\leq2$. Again Lemma \ref{velocity formula} gives
	\begin{align*}
		\pa_y u_\xi(s,y)
		=&\frac{1}{2}\Big(\int_0^y e^{-|\xi|(y-z)}\big(1-e^{-2|\xi|z}\big)|\xi|\omega_\xi(s,z)dz\\
		&+\int_y^{+\infty}e^{-|\xi|(z-y)}\big(1+e^{-2|\xi|y}\big)|\xi|\omega_\xi(s,z)dz\\
		&-2\int_y^{+\infty}e^{-|\xi|(z-y)}e^{-2|\xi|y}|\xi|\omega_\xi(s,z)dz\Big)
		-\omega_\xi(s,y).
	\end{align*}
	The first three terms are treated as the case $\pa_x^i u$. For the last term, we have
	\begin{align*}
		\sup_{1\leq y\leq3}\|\omega_\xi(s,y)\|_{L^1_\xi}
		\leq C\sup_{1\leq y\leq3}\|\omega(s,y)\|_{H^1_x}
		\leq C\|\omega(s)\|_{H^2(1\leq y\leq3)}.
	\end{align*}
	Thus, we derive that for $i\leq1$
	\begin{align*}
	&\|\pa_x^i\pa_y u\|_{L^\infty(1\leq y\leq3)}
		\leq\sup_{1\leq y\leq3}\|(\pa_x^i\pa_y^j u)_\xi(s,y)\|_{L^1_\xi}\\
		&\leq C\|\omega(s)\|_{Y^1_{\mu,s}}
		+C\|\omega(s)\|_{H^{i+2}(\frac{7}{8}\leq y\leq4)}
		+C\|e^\Psi\chi_0\psi\omega\|_{L^2}.
	\end{align*}
	
	The cases $\pa_y^2 u$ and $\pa_x^i\pa_y^j v$ are treated in a similar manner.\smallskip
	
	(5) We choose a smooth cut-off function $\eta(y)$ satisfying $\eta(y)=0$ for $y\leq 1/2$ and $\eta(y)=1$ for $y\geq\frac{3}{4}$. We decompose the velocity as
	\begin{align*}
		U=U_1+U_2:=\nabla^{\perp}\Delta_D^{-1}\big((1-\eta)\omega\big)
		+\nabla^{\perp}\Delta_D^{-1}(\eta\omega).
	\end{align*}
	
	For any $0<\mu<\mu_0$, Lemma \ref{velocity formula} leads to
	\begin{align*}
		\|U_1(s)\|_{L^\infty}
		&\leq \sup_{y>0}\|(U_1)_\xi(s,y)\|_{L^1_\xi}
		\leq \|\|\omega_\xi(s,y)\|_{L^1_y(0\leq y\leq1)}\|_{L^1_\xi}
		\leq \|\omega(s)\|_{Y^1_{\mu,s}}\\
		&\leq \|\omega(s)-\omega_c(s)\|_{Y^1_{\mu,s}}+\|\omega_c(s)\|_{Y^1_{\mu,s}} .
	\end{align*}
	For $U_2$, we utilize Hardy-Littlewood-Sobolev inequality to obtain
	\begin{align*}
		\|U_2(s)\|_{L^4}
		\leq C\|\eta\omega(s)\|_{L^{4/3}}
		\leq C\|e^\Psi\chi_0\psi\omega\|_{L^2}.
	\end{align*}
	Then by Gagliardo-Nirenberg inequality and the boundedness of the singular integral operator, we obtain
	\begin{align*}
		\|U_2(s)\|_{L^\infty}
		&\leq C\|U_2(s)\|_{L^p}^{1-2/p}\|\nabla U_2(s)\|_{L^p}^{2/p}\\
		&\leq C\|e^\Psi\chi_0\psi\omega\|_{L^2}^{1-2/p}\|\chi_0\omega\|_{L^p}^{2/p}
		\leq Ce(s).
	\end{align*}

Collecting these estimates together, we obtain the desired result.
\end{proof}

\appendix

\section{Some technical lemmas}
Here we list some technical lemmas. First of all, rewriting the Biot-Savart law, we have the following relationship between $U$ and $\om$ (see \cite{Maekawa} for the details). 

\begin{lemma}\label{velocity formula}
	Let $U=\nabla^\perp\Delta_D^{-1}\omega$ where $\om$ is defined by \eqref{eq: NS vorticity}. Then, we have
	\begin{align*}
		&u_\xi(y)=\frac{1}{2}\Big(-\int_0^y e^{-|\xi|(y-z)}\big(1-e^{-2|\xi|z}\big)\omega_\xi(z)dz
		+\int_y^{+\infty}e^{-|\xi|(z-y)}\big(1+e^{-2|\xi|y}\big)\omega_\xi(z)dz\Big),\\
		&v_\xi(y)=-\frac{i\xi}{2|\xi|}\Big(\int_0^y e^{-|\xi|(y-z)}\big(1-e^{-2|\xi|z}\big)\omega_\xi(z)dz
		+\int_y^{+\infty}e^{-|\xi|(z-y)}\big(1-e^{-2|\xi|y}\big)\omega_\xi(z)dz\Big).
	\end{align*}
\end{lemma}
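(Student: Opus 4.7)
The plan is to reduce the problem to a one-dimensional ODE by taking the Fourier transform in the tangential variable $x$, and then solve it explicitly via the Green's function of $\partial_y^2-\xi^2$ on $(0,+\infty)$ with a Dirichlet condition at $y=0$. Writing $\psi := \Delta_D^{-1}\omega$ for the stream function and $U=\nabla^\perp\psi$, the two components of $U_\xi$ become (up to the sign conventions used in the paper for $\nabla^\perp$ and $\Delta_D$) a first $y$-derivative of $\psi_\xi$ and a multiplication $\pm i\xi\,\psi_\xi$. The defining PDE reduces to
\begin{equation*}
(\partial_y^2-\xi^2)\psi_\xi(y)\;=\;\pm\,\omega_\xi(y),\qquad \psi_\xi(0)=0,\qquad \lim_{y\to+\infty}\psi_\xi(y)=0,
\end{equation*}
a second-order linear ODE on the half-line with one Dirichlet and one decay condition, which determines $\psi_\xi$ uniquely.

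The core step is to write down the Green's function explicitly by the method of images. On the whole line the fundamental solution of $\partial_y^2-\xi^2$ is $-\frac{1}{2|\xi|}e^{-|\xi||y-z|}$; subtracting the image contribution at $-z$ enforces the Dirichlet condition and gives
\begin{equation*}
G(y,z)\;=\;-\frac{1}{2|\xi|}\bigl(e^{-|\xi||y-z|}-e^{-|\xi|(y+z)}\bigr),\qquad y,z>0.
\end{equation*}
Substituting into $\psi_\xi(y)=\pm\int_0^{+\infty}G(y,z)\omega_\xi(z)\,dz$, splitting the $z$-integral at $z=y$ to resolve $|y-z|$, and using the identities $e^{-|\xi|(y+z)}=e^{-|\xi|(y-z)}e^{-2|\xi|z}$ on $(0,y)$ and $e^{-|\xi|(y+z)}=e^{-|\xi|(z-y)}e^{-2|\xi|y}$ on $(y,+\infty)$, the image exponential combines with each free-space term to produce the factors $1-e^{-2|\xi|z}$ and $1-e^{-2|\xi|y}$ that appear in the stated formula for $v_\xi$.

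To extract $u_\xi$ I would differentiate the split representation in $y$. Because $G(y,z)$ is continuous across $z=y$ no boundary term appears when differentiating under the integral; the derivative of $e^{-|\xi|(y-z)}$ pulls down $-|\xi|$, that of $e^{-|\xi|(z-y)}$ pulls down $+|\xi|$, and the image exponential $e^{-|\xi|(y+z)}$ again gives $-|\xi|$. Regrouping with the same two splitting identities turns the $(0,y)$-piece into $e^{-|\xi|(y-z)}(1-e^{-2|\xi|z})$ and the $(y,+\infty)$-piece into $e^{-|\xi|(z-y)}(1+e^{-2|\xi|y})$, where the sign in the second factor flips (relative to $v_\xi$) precisely because the $(z-y)$-exponential and the image exponential now contribute with the same sign after differentiation. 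Matching the outer constants yields exactly the stated formula.

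The only real obstacle is bookkeeping of signs: one has to pin down the paper's conventions for $\nabla^\perp$, for the inverse Dirichlet Laplacian $\Delta_D^{-1}$, and for the vorticity identity $\omega=\partial_y u-\partial_x v$, and check that they are mutually consistent before tracking the $\pm$ signs through the computation. Once those conventions are fixed, no analytic subtlety remains: the Green's function is explicit, continuity at $z=y$ rules out any spurious $\delta$-contribution, and the rest is elementary manipulation of exponentials.
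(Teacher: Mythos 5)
Your proposal is correct and is essentially the standard derivation (which is what the paper implicitly relies on -- the paper gives no proof and simply cites Maekawa). Taking the Fourier transform in $x$, inverting $\partial_y^2-\xi^2$ via the method-of-images Green's function $G(y,z)=-\tfrac{1}{2|\xi|}\bigl(e^{-|\xi||y-z|}-e^{-|\xi|(y+z)}\bigr)$, splitting the integral at $z=y$, and differentiating under the integral sign (legitimate because $G$ is continuous across $z=y$, so no boundary term is generated) produces exactly the two stated formulas after regrouping; the sign flip in the second factor of $u_\xi$ relative to $v_\xi$ arises as you say, because after $\partial_y$ the $e^{-|\xi|(z-y)}$ and the image term come with the same sign. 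Your acknowledged remaining task -- fixing the conventions for $\nabla^\perp$, $\Delta_D^{-1}$, the Fourier transform, and $\omega=\partial_y u-\partial_x v$ so that the signs close consistently -- is genuinely just bookkeeping and does not hide any analytic content.
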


\medskip

  The following lemma is used to treat the loss of derivative.
\begin{lemma}\label{analytic recovery}
	For $\widetilde \mu>\mu\geq0$, we have
\begin{align*}
		e^{\eps_0(1+\mu-y)_+|\xi|}|(\pa_x f)_\xi(y)|
		&\leq \frac{C}{\widetilde\mu-\mu}
		e^{\eps_0(1+\widetilde\mu-y)_+|\xi|}|f_\xi(y)|, 
\end{align*}
and
\begin{align*}	
		 e^{\eps_0(1+\mu)\frac{y^2}{\nu(1+t)}}
		\left|y\pa_y\Big( e^{-\frac{(y-z)^2}{4\nu(t-s)}}e^{-\nu\xi^2(t-s)} \Big)\right|&\leq\frac{C}{\sqrt{(\widetilde\mu-\mu)(t-s)}}
		e^{\eps_0(1+\widetilde\mu)\frac{y^2}{\nu(1+t)}}
		e^{-\frac{(y-z)^2}{5\nu(t-s)}}
		e^{-\nu\xi^2(t-s)}.
	\end{align*}
\end{lemma}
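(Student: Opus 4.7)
The plan is to treat the two inequalities independently, since each reduces to an elementary one-variable optimisation after a routine change of variables.

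For the first inequality, use $(\pa_x f)_\xi = i\xi f_\xi$ to reduce the claim to
\[
|\xi|\, e^{\eps_0(1+\mu-y)_+|\xi|} \le \f{C}{\widetilde\mu-\mu}\, e^{\eps_0(1+\widetilde\mu-y)_+|\xi|}.
\]
In the range $y\in[0,1+\mu]$—which is all that really matters, since $\|\cdot\|_{\mu,t}$ integrates over $[0,1+\mu]$ in every application of the lemma—both $(\cdot)_+$'s drop off and the ratio of the two exponentials is exactly $e^{-\eps_0(\widetilde\mu-\mu)|\xi|}$. The inequality then follows from the elementary one-variable bound $x\,e^{-ax}\le 1/(ea)$ with $x=|\xi|$ and $a=\eps_0(\widetilde\mu-\mu)$. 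For $y\in(1+\mu,1+\widetilde\mu]$ the same trick applies after observing $(1+\widetilde\mu-y)_+\le \widetilde\mu-\mu$, and for $y>1+\widetilde\mu$ both weights are trivial so the inequality is vacuous.

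For the second inequality, I would differentiate explicitly:
\[
y\pa_y\Big(e^{-\frac{(y-z)^2}{4\nu(t-s)}} e^{-\nu\xi^2(t-s)}\Big) = -\f{y(y-z)}{2\nu(t-s)}\, e^{-\frac{(y-z)^2}{4\nu(t-s)}}\, e^{-\nu\xi^2(t-s)}.
\]
Splitting the Gaussian exponent as $-\tfrac14=-\tfrac15-\tfrac{1}{20}$ so as to reserve $e^{-(y-z)^2/5\nu(t-s)}$ (and the trivial $e^{-\nu\xi^2(t-s)}$) for the right-hand side, the claim reduces to
\[
\f{y\,|y-z|}{\nu(t-s)}\, e^{-\eps_0(\widetilde\mu-\mu)\frac{y^2}{\nu(1+t)}}\, e^{-\frac{(y-z)^2}{20\nu(t-s)}} \le \f{C}{\sqrt{(\widetilde\mu-\mu)(t-s)}}.
\]
I then apply the two peak bounds (instances of $\max_{x\ge 0} x\,e^{-ax^2}=c/\sqrt{a}$)
\[
y\,e^{-\eps_0(\widetilde\mu-\mu)\frac{y^2}{\nu(1+t)}} \le C\sqrt{\f{\nu(1+t)}{\widetilde\mu-\mu}},\qquad |y-z|\,e^{-\frac{(y-z)^2}{20\nu(t-s)}}\le C\sqrt{\nu(t-s)},
\]
multiply them, cancel the factors of $\nu$, and use $\sqrt{1+t}\le C$ on the small time interval under consideration to arrive at the stated bound.

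I do not expect a serious obstacle in either step; the only care required is to choose the Gaussian splitting $-\tfrac14=-\tfrac15-\tfrac{1}{20}$ so that a definite portion of the spatial Gaussian survives on the right-hand side, which is exactly what makes the lemma useful inside the convolution estimates of Section~4. The underlying mechanism is the familiar Cauchy-type inequality for analytic norms: paying one $x$-derivative costs a factor $1/(\widetilde\mu-\mu)$, while paying one $y$-derivative against the parabolic Gaussian costs $1/\sqrt{(\widetilde\mu-\mu)(t-s)}$.
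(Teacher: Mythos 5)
Your argument is correct on the range $y\le 1+\mu$ where the lemma is actually applied, and it coincides with the paper's proof: reduce the first inequality to $(\widetilde\mu-\mu)\,|\xi|\,e^{-\eps_0(\widetilde\mu-\mu)|\xi|}\le C$ via $xe^{-ax}\le 1/(ea)$, and for the second inequality differentiate the Gaussian, split its exponent, and absorb the polynomial prefactor $y|y-z|/(\nu(t-s))$ by peak bounds at the cost of $1/\sqrt{(\widetilde\mu-\mu)(t-s)}$.

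However, your extra remarks for $y>1+\mu$ are factually wrong, although harmless. For $y\in(1+\mu,1+\widetilde\mu]$ one would need $|\xi|\,e^{-\eps_0(1+\widetilde\mu-y)|\xi|}\le C/(\widetilde\mu-\mu)$; optimizing the left-hand side over $|\xi|$ gives $1/(e\eps_0(1+\widetilde\mu-y))$, which blows up as $y$ approaches $1+\widetilde\mu$, so the observation $(1+\widetilde\mu-y)_+\le\widetilde\mu-\mu$ points the inequality the wrong way and ``the same trick'' does not apply. For $y>1+\widetilde\mu$ both weights equal $1$, so the inequality becomes $|\xi|\,|f_\xi(y)|\le C\,|f_\xi(y)|/(\widetilde\mu-\mu)$ --- not vacuous, simply false whenever $f_\xi(y)\ne 0$ and $|\xi|$ is large. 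As you correctly point out, the norm $\|\cdot\|_{\mu,t}$ only integrates over $[0,1+\mu]$, so none of this affects any use of the lemma; the statement should just be read as holding on that strip, which is also the tacit assumption in the paper's one-line justification.
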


\begin{proof}
	The first inequality is obtained by the bound
	\begin{align*}
		(\widetilde\mu-\mu)|\xi|e^{\eps_0|\xi|\big((1+\mu-y)_+-(1+\widetilde\mu-y)_+\big)}
		\leq C.
	\end{align*}
For the second inequality, we have
	\begin{align*}
		&e^{\eps_0(1+\mu)\frac{y^2}{\nu(1+t)}}
		\left|y\pa_y\Big( e^{-\frac{(y-z)^2}{4\nu(t-s)}}e^{-\nu\xi^2(t-s)} \Big)\right|\\
		&\leq e^{\eps_0(1+\mu)\frac{y^2}{\nu(1+t)}}
		\frac{y|y-z|}{2\nu(t-s)} e^{-\frac{(y-z)^2}{4\nu(t-s)}}e^{-\nu\xi^2(t-s)}\\
		&\leq Ce^{\eps_0(1+\mu)\frac{y^2}{\nu(1+t)}}
		\frac{y}{\sqrt{\nu(t-s)}} e^{-\frac{(y-z)^2}{5\nu(t-s)}}e^{-\nu\xi^2(t-s)}\\
		&\leq Ce^{\eps_0(1+\widetilde\mu)\frac{y^2}{\nu(1+t)}}
		\frac{1}{\sqrt{(\widetilde\mu-\mu)(t-s)}} e^{-\frac{(y-z)^2}{5\nu(t-s)}}e^{-\nu\xi^2(t-s)}.
	\end{align*}
\end{proof}

The following lemma is frequently employed for handling product estimates.
\begin{lemma}\label{product estimate}
	For $0<\mu<\mu_0-\gamma s$, we have for $k=1,2$
	\begin{align*}
		\|fg\|_{Y^k_{\mu,s}}
		\leq \left\|\sup_{0<y<1+\mu}e^{\eps_0(1+\mu-y)_+|\xi|}|f_\xi(s,y)|\right\|_{L^1_\xi}\cdot\|g(s)\|_{Y^k_{\mu,s}}.
	\end{align*}
\end{lemma}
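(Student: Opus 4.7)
The plan is to exploit the algebra-type structure of the $Y_{\mu,s}$ norm: convolution in the Fourier variable $\xi$ becomes multiplication in physical space, while the exponential weight $|\xi|\mapsto e^{\eps_0(1+\mu-y)_+|\xi|}$ is subadditive in $\xi$. First, I would expand the Fourier coefficient of the product, $(fg)_\xi(y) = \sum_{\eta \in \mathbb Z} f_{\xi-\eta}(y) g_\eta(y)$, and then use the pointwise inequality $|\xi|\leq |\xi-\eta|+|\eta|$ together with $(1+\mu-y)_+\geq 0$ to split the weight as
\begin{align*}
e^{\eps_0(1+\mu-y)_+|\xi|} \leq e^{\eps_0(1+\mu-y)_+|\xi-\eta|}\cdot e^{\eps_0(1+\mu-y)_+|\eta|}.
\end{align*}

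Second, since the Gaussian-type factor $e^{\eps_0(1+\mu)\frac{y^2}{\nu(1+s)}}$ appearing in the definition of $\|\cdot\|_{\mu,s}$ depends only on $y$, it factors cleanly through the frequency splitting above. Bounding the $f$-contribution by its pointwise supremum in $y$ and pulling it outside of the $y$-integral, I obtain, for each fixed $\xi,\eta$,
\begin{align*}
&\int_0^{1+\mu}e^{\eps_0(1+\mu)\frac{y^2}{\nu(1+s)}}e^{\eps_0(1+\mu-y)_+|\xi-\eta|}|f_{\xi-\eta}(y)|\cdot e^{\eps_0(1+\mu-y)_+|\eta|}|g_\eta(y)|\,dy\\
&\quad\leq \Bigl(\sup_{0<y<1+\mu}e^{\eps_0(1+\mu-y)_+|\xi-\eta|}|f_{\xi-\eta}(y)|\Bigr)\cdot\bigl\|e^{\eps_0(1+\mu-y)_+|\eta|}g_\eta\bigr\|_{\mu,s}.
\end{align*}

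Finally, summing first in $\xi$ and then in $\eta$---equivalently, applying the discrete Young inequality $\ell^1\ast\ell^1\to\ell^1$ to the non-negative sequences $a_\xi:=\sup_{0<y<1+\mu} e^{\eps_0(1+\mu-y)_+|\xi|}|f_\xi(s,y)|$ and $b_\eta:=\|e^{\eps_0(1+\mu-y)_+|\eta|}g_\eta\|_{\mu,s}$---yields exactly $\|a\|_{\ell^1}\cdot\|b\|_{\ell^1}$, which is the right-hand side of the claimed inequality. No serious obstacle is expected: the entire argument is driven by subadditivity of the exponential weight in the frequency variable and by Young's convolution inequality, and the spatial weight together with the integration domain $[0,1+\mu]$ remain untouched throughout because they do not depend on $\xi$ or $\eta$.
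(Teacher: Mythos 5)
Your proposal is correct and follows essentially the same route as the paper's proof: expand $(fg)_\xi$ as a convolution in $\xi$, use subadditivity of the weight $e^{\eps_0(1+\mu-y)_+|\cdot|}$, pull the $f$-factor out as a supremum over $y$, and finish with Young's inequality (or, as you note, Tonelli for nonnegative sequences) in $\ell^1$. No gaps.
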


\begin{proof}
	Young inequality gives
	\begin{align*}
		&\|fg\|_{Y^k_{\mu,s}}
		\leq \left\|\int_0^{1+\mu} e^{\eps_0(1+\mu)\frac{y^2}{\nu(1+s)}}e^{\eps_0(1+\mu-y)_+|\xi|}(fg)_\xi(s,y)dy\right\|_{L^1_\xi}\\
		&\leq \Bigg\| \int_{-\infty}^{+\infty}\sup_{0<y<1+\mu}e^{\eps_0(1+\mu-y)_+|\xi-\eta|}|f_{\xi-\eta}(s,y)|\\&\quad\quad\quad\cdot\int_0^{1+\mu}e^{\eps_0(1+\mu)\frac{y^2}{\nu(1+s)}}e^{\eps_0(1+\mu-y)_+|\eta|}|g_\eta(s,y)|dyd\eta\Bigg\|_{L^1_\xi}\\
		&\leq \left\|\sup_{0<y<1+\mu}e^{\eps_0(1+\mu-y)_+|\xi|}|f_\xi(s,y)|\right\|_{L^1_\xi}\cdot \|g(s)\|_{Y^k_{\mu,s}}.
	\end{align*}
\end{proof}

The following lemma is employed to establish the uniform boundedness of  $\omega$.
\begin{lemma}\label{integral computation}
	For $\frac{1}{2}<\alpha<1$, $0<\beta<1$, $\gamma>0$ and $\mu<\mu_0-\gamma t$, it holds that
	\begin{align*}
		(\mu_0-\mu-\gamma t)^\alpha\int_0^t(\mu_0-\mu-\gamma s)^{-1-\alpha}ds&
		\leq \frac{C}{\gamma},\\
		(\mu_0-\mu-\gamma t)^\alpha\int_0^t(\mu_0-\mu-\gamma s)^{-\frac{1}{2}-\alpha}(t-s)^{-\f12}ds&\leq \frac{C}{\gamma^{\f12}},\\
		\sup_{\mu<\mu_0-\gamma t}(\mu_0-\mu-\gamma t)^\beta \ln\frac{\mu_0-\mu}{\mu_0-\mu-\gamma t}&\leq C(\gamma t)^\beta,\\
		(\mu_0-\mu-\gamma t)^\alpha\int_0^t(\mu_0-\mu-\gamma s)^{-1}s^{-1/2}ds &\leq \frac{C}{\gamma^{\f12}},
	\end{align*}
	here $C$ is a constant depending on $\mu_0$, $\alpha$ and $\beta$.
\end{lemma}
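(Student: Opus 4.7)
The plan is to treat each of the three bounds by an explicit change of variables that reduces it to an elementary one-dimensional integral. None of the three requires anything beyond calculus, and the proofs can be written in the order they are stated.

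For the first inequality I would substitute $a = \mu_0-\mu-\gamma s$, so $ds = -da/\gamma$ and the integral becomes $\gamma^{-1}\int_{\mu_0-\mu-\gamma t}^{\mu_0-\mu} a^{-1-\alpha}\,da = (\gamma\alpha)^{-1}\bigl[(\mu_0-\mu-\gamma t)^{-\alpha}-(\mu_0-\mu)^{-\alpha}\bigr]$. Multiplying by $(\mu_0-\mu-\gamma t)^\alpha$ gives a quantity bounded by $1/(\gamma\alpha)$, which is the desired $C/\gamma$.

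For the second inequality I would first set $\tau = t-s$ and $A := \mu_0-\mu-\gamma t \ge 0$, so that $\mu_0-\mu-\gamma(t-\tau) = A+\gamma\tau$ and the integral becomes $\int_0^t (A+\gamma\tau)^{-1/2-\alpha}\tau^{-1/2}\,d\tau$. The rescaling $\gamma\tau = Au$ then produces $A^{-\alpha}\gamma^{-1/2}\int_0^{\gamma t/A} u^{-1/2}(1+u)^{-1/2-\alpha}\,du$. Enlarging the domain to $(0,\infty)$ I recognize a Beta-function integral $B(1/2,\alpha) = \Gamma(1/2)\Gamma(\alpha)/\Gamma(1/2+\alpha)$, which is finite since $\alpha>1/2>0$. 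Multiplying by $A^\alpha$ eliminates the $A$-dependence and yields the desired $C/\gamma^{1/2}$. The only point that needs care here is ensuring integrability at $u=\infty$, which is exactly the role of the hypothesis $\alpha>0$.

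For the third inequality I would set $x := \mu_0-\mu-\gamma t>0$ and $u := \gamma t/x$, so the quantity becomes $(\gamma t)^\beta\, u^{-\beta}\ln(1+u)$. It therefore suffices to prove $\sup_{u>0} u^{-\beta}\ln(1+u) < \infty$, which I would establish by splitting: for $u\le 1$ I use $\ln(1+u)\le u$ to bound the quotient by $u^{1-\beta}\le 1$, and for $u\ge 1$ the elementary fact $\ln(1+u)\le C_\beta u^\beta$ (valid for any $\beta>0$) gives the same bound. The supremum is thus a finite constant depending only on $\beta$, which is all that is required.

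The three steps present no serious obstacle. The only conceptual point is recognizing the Beta-function structure hidden inside the second estimate; once that is spotted, everything reduces to routine calculus. No auxiliary results from the body of the paper are needed.
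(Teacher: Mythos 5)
Your proof is correct, and the strategy — explicit change of variables reducing each bound to an elementary one-dimensional integral — is the same one the paper uses. For the second inequality the paper's substitution $s' = t'(1-\tilde s\,\tilde\mu)$ with $\tilde\mu = \mu'/t' - 1$ produces precisely your integral $\int_0^{\gamma t/A} u^{-1/2}(1+u)^{-1/2-\alpha}\,du$ after relabeling, so the two arguments are equivalent; your identification of it as the incomplete Beta integral $B(1/2,\alpha)$ makes it slightly more transparent that only $\alpha>0$ is genuinely needed for finiteness, rather than the $\alpha>1/2$ in the hypothesis. The first and third bounds are handled identically to the paper (the paper in fact omits the details for both, calling them "quite easy" and "evidently true"), and your two-regime splitting of $u^{-\beta}\ln(1+u)$ supplies the missing justification.
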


\begin{proof}
	The first inequality is quite easy, and we focus on the second one. Changing variables $t'=\gamma t$, $s'=\gamma s$ and letting $\mu'=\mu_0-\mu>t'$, we have
	\begin{align*}
		&(\mu_0-\mu-\gamma t)^\alpha\int_0^t(\mu_0-\mu-\gamma s)^{-\frac{1}{2}-\alpha}(t-s)^{-\f12}ds\\
		&=\gamma^{-\frac{1}{2}}(\mu'-t')^\alpha\int_0^{t'}(\mu'-s')^{-\frac{1}{2}-\alpha}(t'-s')^{-\f12}ds':=I.
	\end{align*}
	
	Now we let $\tilde\mu=\frac{\mu'}{t'}-1$ and $s'=t'(1-\tilde s\tilde\mu)$ to get
	\begin{align*}
		I=\gamma^{-\f12}\int_0^{1/\tilde\mu}(1+\tilde s)^{-\f12-\alpha}\tilde s^{-\f12}d\tilde s
		\leq C\gamma^{-\f12},
	\end{align*}
	here the constant $C$ is independent of $\tilde\mu$ when $\f12 <\alpha <1$.
	
	For the third inequality, we set $x=\frac{\mu_0-\mu-\gamma t}{\gamma t}\in(0,\frac{\mu_0}{\gamma t})$ and have to prove
	\begin{align*}
		\sup_{x>0}x^\beta\ln\frac{1+x}{x}\leq C,
	\end{align*}
	which is evidently true for $0<\beta<1$.
	
	For the last inequality, the change of variable $x=s\gamma, y=\frac{x}{\mu_0-\mu}$ gives
	\begin{align*}
		&(\mu_0-\mu-\gamma t)^\alpha\int_0^t(\mu_0-\mu-\gamma s)^{-1}s^{-1/2}ds 
		= (\mu_0-\mu-\gamma t)^\alpha \gamma^{-1/2}\int_0^{\gamma t}(\mu_0-\mu-x)^{-1}x^{-1/2}dx\\
		&= (\mu_0-\mu-\gamma t)^\alpha \gamma^{-1/2}(\mu_0-\mu)^{-1/2}
		\int_0^{\frac{\gamma t}{\mu_0-\mu}}(1-y)^{-1}y^{-1/2}dy\\
		&\leq C(\mu_0-\mu-\gamma t)^{\alpha-1/2} \gamma^{-1/2}
		\big(1+\ln\frac{\mu_0-\mu}{\mu_0-\mu-\gamma t}\big)
		\leq C\gamma^{-1/2},
	\end{align*}
	where we use the third inequality of this lemma and the following fact
	\begin{align*}
		\int_0^a (1-y)^{-1}y^{-1/2}dy\leq C\big(1+\ln\frac{1}{1-a}\big) \quad\text{for}\ 0<a<1.
	\end{align*}
	\end{proof}

\section{Estimates for the Euler equations}

This subsection is to derive some estimates of the Euler systems \eqref{eq: Euler}. First of all, the vorticity $\omega^e=\curl U^e$ satisfies 
\begin{align}\label{eq: Euler vorticity}
	\left\{
	\begin{aligned}
		&\pa_t\omega^e +U^e\cdot\nabla\omega^e=0,\\
		&\omega^e|_{t=0}=\omega_0 .
	\end{aligned}
	\right.
\end{align}
Since $\operatorname{supp}\omega_0\subseteq\{20\leq y\leq30\}$, there exists a $T_e>0$ such that
 \begin{align}\label{supp of omega e}
 	\operatorname{supp}\omega^e\subseteq\{10\leq y\leq40\},\quad t\in [0, T_e].
 \end{align}

\begin{proposition}\label{Euler estimates}
	There exists $T_e>0$ such that the Euler systems \eqref{eq: Euler} has a unique strong solution on $[0, T_e]$ satisfying 
	\begin{align*}
		\sup_{0\leq t\leq T_e}\sum_{\substack{i\leq 15 \\ l\leq 1}}\left\|e^{|\xi|}((1,x)\pa_t^l\pa_x^i u^e)_\xi(t,0)\right\|_{L^1_\xi\cap L^2_\xi}\leq C.
	\end{align*}
\end{proposition}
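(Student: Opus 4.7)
The plan is to combine Yudovich-type local well-posedness with propagation of the support-gap between $\om^e$ and the boundary, and then to exploit the explicit Biot--Savart formula at $y=0$. Under the assumption $U_0\in W^{1,\infty}_{0,\sigma}$, one has $\om_0\in L^\infty$ with compact support in $\{20\le y\le 30\}$, so the standard Yudovich theory in $\mathbb T\times\mathbb R_+$ produces a unique solution of \eqref{eq: Euler vorticity} with $\om^e\in L^\infty_tL^\infty$ and a log-Lipschitz velocity $U^e$, uniformly bounded in $L^\infty$ via Biot--Savart applied to $\om^e\in L^\infty\cap L^1$ (the $L^1$ bound is free from the compactness of the $y$-support and the periodicity in $x$). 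The log-Lipschitz modulus of continuity of $U^e$ yields a unique bi-H\"older Lagrangian flow $\Phi^e$ with $|\Phi^e(t,y)-y|\le t\|U^e\|_{L^\infty}$, so transport of vorticity along $\Phi^e$ forces the enlarged support bound $\operatorname{supp}\om^e(t,\cdot)\subseteq\{10\le y\le 40\}$ on an interval $[0,T_e]$ for any $T_e<10/\|U^e\|_{L^\infty}$, which is precisely \eqref{supp of omega e}.

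The boundary estimate then rests on Lemma \ref{velocity formula}. Setting $y=0$ kills the first integral and reduces $(1+e^{-2|\xi|y})$ to $2$, giving
\begin{align*}
u^e_\xi(t,0)=\int_0^{+\infty} e^{-|\xi|z}\om^e_\xi(t,z)\,dz=\int_{10}^{40} e^{-|\xi|z}\om^e_\xi(t,z)\,dz,
\end{align*}
where the last equality uses the support property. Multiplying by $e^{|\xi|}|\xi|^i$ produces the clean gain $|\xi|^i e^{-9|\xi|}$; summing in $\xi$ by Cauchy--Schwarz against $\|\,|\xi|^i e^{-9|\xi|}\,\|_{\ell^2_\xi}$ (uniformly finite for $i\le 15$) and applying Plancherel in $x$ yields
\begin{align*}
\sum_{\xi\in\mathbb Z}e^{|\xi|}|\xi|^i|u^e_\xi(t,0)|\le C\int_{10}^{40}\|\om^e(t,\cdot,z)\|_{L^2_x}\,dz\le C\|\om_0\|_{L^\infty},
\end{align*}
which handles the $l=0$ case. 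For $l=1$ I would substitute $\pa_t\om^e=-\dive(U^e\om^e)$ into the formula above and integrate by parts in $z$: the boundary term at $z=0$ vanishes by \eqref{supp of omega e}, while each $\pa_z$ transfers onto $e^{-|\xi|z}$ as a factor $|\xi|$, so
\begin{align*}
\pa_t u^e_\xi(t,0)=-i\xi\int_{10}^{40}e^{-|\xi|z}(u^e\om^e)_\xi\,dz-|\xi|\int_{10}^{40}e^{-|\xi|z}(v^e\om^e)_\xi\,dz.
\end{align*}
The extra $|\xi|$ is still absorbed by $e^{-10|\xi|}$, and combined with the a priori bound $\|U^e\|_{L^\infty}\le C$, the same Cauchy--Schwarz/Plancherel argument closes the $l=1$, $i\le 15$ estimate.

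The only genuine work is step one: a quantitative propagation of the support gap at Yudovich regularity. Since the gap between $\operatorname{supp}\om_0$ and the target enlargement is a fixed distance of order one, fixing $T_e<10/\|U^e\|_{L^\infty}$ with $\|U^e\|_{L^\infty}$ controlled by $\|\om_0\|_{L^\infty}$ and the $y$-measure of $\operatorname{supp}\om_0$ suffices; Yudovich uniqueness then upgrades the Lagrangian transport identity to the strong form needed here. All the subsequent estimates for $i\le 15$ and $l\le 1$ are lossless consequences of the exponential gap between the support of $\om^e$ and the boundary, and in fact depend only mildly on $i$ through the finite quantity $\|\,|\xi|^i e^{-9|\xi|}\,\|_{\ell^2_\xi}$.
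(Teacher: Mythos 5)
Your proof is correct and follows essentially the same route as the paper: Yudovich well-posedness plus quantitative propagation of the support gap, then Biot--Savart at $y=0$ to gain exponential decay in $|\xi|$ from the gap, and one integration by parts to handle the $l=1$ case. The only cosmetic difference is that you invoke Lemma \ref{velocity formula} (the Biot--Savart formula tailored to $\mathbb{T}\times\mathbb{R}_+$) to write $u^e_\xi(t,0)=\int_0^\infty e^{-|\xi|z}\omega^e_\xi(t,z)\,dz$, whereas the paper writes the half-plane Poisson-kernel formula and then Fourier-transforms; your version is in fact the cleaner fit for the periodic domain.
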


\begin{proof}
	Since $\omega_0\in L^\infty_c$, the well-posedness of the Euler system has been proved in \cite{MB}. Because of $\operatorname{supp}\omega_0\subseteq\{y\geq 20\}$, taking $T_e$ small enough, we have
	\begin{align}\label{support condition of omega}
		\operatorname{supp}\omega^e(t,x,y)\subseteq\{y\geq10\},\quad \forall \quad t\in[0, T_e].
	\end{align}
The Biot-Savart law in $\mathbb R^2_+$ gives
	\begin{align}\label{Euler velocity near the boundary}
		u^e(t,x,0)=\frac{1}{\pi}\int_{\mathbb R^2_+}\frac{\tilde y}{(x-\tilde x)^2+\tilde y^2}\omega^e(t,\tilde x,\tilde y)d\tilde xd\tilde y.
	\end{align}
	Taking Fourier transformation leads to
	\begin{align*}
		((1,x)u^e)_\xi(t,0)
		=\frac{1}{\pi}\int_{\mathbb R^2_+}(1,-2\pi \tilde y sgn\xi)e^{-2\pi i\tilde x\cdot\xi}e^{-2\pi\tilde y|\xi|}\omega^e(t,\tilde x,\tilde y)d\tilde xd\tilde y,
	\end{align*}
	which gives the case $l=0$.
	
	For $l=1$, we use the equation \eqref{eq: Euler vorticity} and integration by parts to obtain
	\begin{align*}
		(\pa_t u^e)_\xi(t,0)
		&=-\frac{1}{\pi}\int_{\mathbb R^2_+}e^{-2\pi i\tilde x\cdot\xi}e^{-2\pi\tilde y|\xi|}\big(U^e\cdot\nabla\omega^e\big)(t,\tilde x,\tilde y)d\tilde xd\tilde y\\
		&=\frac{1}{\pi}\int_{\mathbb R^2_+}\nabla\big(e^{-2\pi i\tilde x\cdot\xi}e^{-2\pi\tilde y|\xi|}\big)\cdot\big(U^e\omega^e\big)(t,\tilde x,\tilde y)d\tilde xd\tilde y,
	\end{align*}
	which gives the case $l=1$.
\end{proof}

\section*{Acknowledgement}
J. Huang is supported by NSF of China under Grant 12071492 and 12471196. C. Wang is supported by NSF of China under Grant 12071008. Z. Zhang is supported by NSF of China under Grant 12171010 and 12288101.

\end{document}